\newtheorem*{theorem*}{Theorem}
\newtheorem*{prop*}{Proposition}
\newtheorem{teo}{Theorem}[section]
\newtheorem{cor}[teo]{Corollary}
\newtheorem{lema}[teo]{Lemma}
\newtheorem{defn}[teo]{Definition}
\newtheorem{prop}[teo]{Proposition}
\newtheorem{ej}[teo]{Example}
\newtheorem{obs}[teo]{Remark}
\newtheorem{thmx}{Theorem}
\newenvironment{customthm}[1]
  {\innercustomthm}
  {\endinnercustomthm}
\newcommand{\N}{\mathbb{N}}
\newcommand{\Z}{\mathbb{Z}}
\newcommand{\R}{\mathbb{R}}
\newcommand{\C}{\mathbb{C}}
\newcommand{\Q}{\mathbb{Q}}
\newcommand{\T}{\mathbb{T}}
\newcommand{\sF}{\ensuremath{\mathcal{F}}}
\newcommand{\norm}[1]{\left\lVert#1\right\rVert}
\begin{document}

\title[On recurrence for $\Z^d$-Weyl systems]{On recurrence for $\Z^d$-Weyl systems}

\author{Sebasti\'an Donoso}
\address{Departamento de Ingenier\'{\i}a
	Matem\'atica and Centro de Modelamiento Ma\-te\-m\'a\-ti\-co, Universidad de Chile and IRL-CNRS 2807, Beauchef 851, Santiago,
	Chile.} 
\email{sdonoso@dim.uchile.cl}

\author{Felipe Hern\'andez}
\address{Department of Mathematics, Ecole polytechnique fédérale de Lausanne, Rte Cantonale, 1015 Lausanne, Switzerland} 
\email{felipe.hernandezcastro@epfl.ch}

\author{Alejandro Maass}
\address{Departamento de Ingenier\'{\i}a Matem\'atica, Centro de Modelamiento Ma\-te\-m\'a\-ti\-co and Millennium Institute Center for Genome Regulation, Universidad de Chile and IRL-CNRS 2807, Beauchef 851, Santiago,
Chile.}
\email{amaass@dim.uchile.cl}

\thanks{The first and third authors were partially funded by Centro de Modelamiento Matemático (CMM) FB210005, BASAL funds for centers of excellence from ANID-Chile. The first author was partially funded by ANID/Fondecyt/1241346. The third author was funded by grant ICN2021-044 from the ANID Millennium Science Initiative. All authors are part of the ECOS-ANID grant C21E04 (ECOS210033)}

\begin{abstract}
We study the topological recurrence phenomenon of actions of locally compact abelian groups on compact metric spaces. In the case of $\Z^d$-actions we develop new techniques to analyze Bohr recurrence sets. These techniques include finding and manipulating correlations between the coordinates of the set of recurrence. Using this, we show that Bohr recurrence sets are recurrence sets for $\Z^d$-Weyl systems. This family encompasses, for example, all $\Z^d$-affine nilsystems. To our knowledge, this is the first result towards a positive answer to Katznelson question in the case of $\Z^d$-actions.
\end{abstract}

\keywords{recurrence, Bohr-recurrence, Weyl systems, nilsystems}

\subjclass[2020]{37B05, 37B20 (Primary), 77A46 (Secondary)}

\maketitle

\section{Introduction}\label{Introduction}
The study of the phenomenon of recurrence in ergodic theory has a rich history. Its origins can be traced back to H. Poincaré's work in the late 19th century on the three-body problem, where he discovered that trajectories in certain dynamical systems tend to return to their starting points even in chaotic or unpredictable scenarios. Poincaré's Recurrence Theorem formalizes this observation, stating that for a measure-preserving transformation $T$ on a probability space $(X, \mathcal{X}, \mu)$ and a set $A \in \mathcal{X}$ with $\mu(A)>0$, for almost any point $x \in A$ there exists a sequence $(n_i)_{i\in \N}$ such that $T^{n_i}x \in A$ for all $i \in \mathbb{N}$.

 The recurrence phenomenon has served to prove various deep results in different areas of mathematics. Since the works of Furstenberg \cite{Furstenberg_ergodic_szemeredi:1977}, and Furstenberg and Weiss \cite{Furstenberg_Weiss_top_dynamics_combin_number_theory:1978}, there has been a profound and fruitful connection between dynamical systems and number theory through recurrence properties, and, more specifically, the notion of set of recurrence (see \cref{def:recurrence_set}) playing a central role in both areas.
 Within this context, the class of nilsystems has proved to be particularly important. Nilsystems have been used, for example, in the ergodic context of multiple convergence along arithmetic progressions \cite{Host_Kra_nonconventional_averages_nilmanifolds:2005} and in the topological context to construct explicit examples of sets with multiple recurrence\footnote{A set $R\subseteq \N$ is a a set of topological multiple recurrence if for every $l\in \N$, every minimal topological system $(X,T)$, and every open set $U\subseteq X$ there is $n\in R$ with $U\cap T^{-n}U\cap\cdots \cap T^{-ln}U\neq \emptyset$.} \cite{Frantzikinakis_Lesigne_Wierdl_sets_k-recurrence:2006, Huang_Shao_Ye_nilbohr_automorphy:2016}.

In our study, we focus on sets of Bohr recurrence, which are sets that exhibit recurrence for all rotations\footnote{Rotations are a subclass of nilsystems, see \cref{nil}}. One is naturally led to ask if this is enough to be a set of recurrence. This is typically known as Katznelson's question, popularized by Katznelson \cite{Katznelson_chromatic_numbers_recurrence:2001} for $\mathbb{Z}$-actions.

Note that we avoided framing Katznelson's question as a conjecture, as neither of the potential answers seems more likely than the other. For a negative answer, there are some potential counterexamples that are known to be good for recurrence in circle rotations, but it remains unclear whether they are good for recurrence across all systems \cite{Grivaux_Roginskaya_examples_recurrence_circle:2013, Frantzikinakis_McCutcheon_ergodic_recurrence:2012, Frantzikinakis_Host_Kra_Bohr_recurrence:2024}. For a positive answer, possible strategies include the use of general structure theorems in topological dynamics and separate the analysis for factors with specific properties (such as equicontinuity, weakly mixing, etc).  For instance, every minimal system can be obtained through proximal and equicontinuous extensions from a strictly PI system (for background see \cite{Auslander_minimal_flows_and_extensions:1988} Chapter 14 or \cite{deVries_elements_topological_dynamics:1993} Chapter 6), or by passing through maximal $d$-step nilsystems and their maximal distal factor (see \cite{Gutman_Zhengxing_strict_distal_models_host-kra:2023} and \cite{Host_Kra_nilpotent_structures_ergodic_theory:2018} for background). These decomposition methods have shown promising results for $\Z$, such as lifting Bohr recurrence to arbitrary minimal $\mathbb{Z}$-pronilsystems \cite{Host_Kra_Maass_variations_top_recurrence:2016}.  Unfortunately, Glasscock, Koutsogiannis and Richter constructed in \cite[Theorem B]{Glasscock_Koutsogiannis_Richter_OnKatznelson_skew:2022} an example of a system and a set that is a set of recurrence for its maximal equicontinous factor, but not a set of recurrence for the system itself. So, one cannot always reduce the study of recurrence to a special factor\footnote{We warn the reader that the aformentioned set is not of Bohr recurrence.  It is of recurrence only for the particular factor considered}.

The main purpose of this article is to study Katznelson's question outside the context of $\Z$-actions. The context of locally compact abelian groups has been previously explored in, for instance, \cite{Le_Le_Bohr_sumsets_I:2021,Griesmer_special_cases_Katznelson:2023, Griesmer_Le_Le_Bohr_sumsets_II:2023}. For general locally compact abelian group actions, we begin in \cref{Sec3} defining Bohr recurrence for abelian locally compact groups, using theory of duality and proving that this definition is equivalent to the usual definition using equicontinuous systems. Then, we state Katznelson's question and we finish \cref{Sec3} pointing out that inverse limits and proximal extensions lift recurrence, a result that was proved for $H=\Z$ by Host, Kra, and Maass in
\cite{Host_Kra_Maass_variations_top_recurrence:2016} but that generalizes with minor adjustments for the action of a locally compact abelian group.

In \cref{Sec4} we study sets of Bohr recurrence for $\Z^d$ group actions, introducing the notion of essential sets of Bohr recurrence, which are sets of Bohr recurrence in which every element has no zero coordinates (see \cref{Essential-set-of-Bohr-recurrence}). We give examples and prove many properties of such sets. Then, we introduce the notion of Bohr correlations (see \cref{Bohr-Correlations}), with which we develop techniques to classify sets of $\Z^d$-Bohr recurrence. In particular, a subclass of sets of great importance is the class of sets of Bohr recurrence with the property of complete independence that corresponds to the case in which the coordinates of a set of Bohr recurrence are correlated either by $0$ or by an irrational number (see \cref{Complete-independence}).

Our main result comes in \cref{Sec5}, where we generalize the theorem of Host, Kra, and Maass on minimal $\Z$-nilsystems to $\Z^d$-Weyl systems, which are nilsystems $(X=G/\Gamma,T_1,\ldots,T_d)$ whose connected component of the identity in $G$ is abelian. 

\begin{thmx}\label{Theorem-A}
Let $R\subseteq \Z^d$ be a set of $\Z^d$-Bohr recurrence. Then, for every integer $s\geq 1$, $R$ is a set of recurrence for every minimal $s$-step $\Z^d$-Weyl system.
\end{thmx}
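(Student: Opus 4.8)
The plan is to induct on the nilpotency step $s$, peeling off one central torus at a time, and to feed the Bohr-recurrence hypothesis into the argument through the correlation machinery of \cref{Sec4}, which is exactly what is needed to control the non-isometric (polynomial) part of the return map.

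For the base case $s=1$, a minimal $\Z^d$-Weyl system is a minimal rotation on a compact abelian group (see \cref{nil}), and for such systems Bohr recurrence coincides with recurrence by the definition set up in \cref{Sec3}. For the inductive step, write $X=G/\Gamma$ and let $G_s$ be the last nontrivial term of the lower central series. Since the identity component $G_0$ is abelian, $G_s$ is a central torus, so $X\to Y:=X/G_s$ is an isometric extension with torus fibre, and $Y$ is a minimal $(s-1)$-step $\Z^d$-Weyl system. By the induction hypothesis $R$ is a set of recurrence for $Y$. The entire difficulty is concentrated here: an isometric extension need not lift recurrence (in contrast with the proximal and inverse-limit extensions treated in \cref{Sec3}), so this lifting is where the work lies.

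To organise the lift, I would first use minimality and the homogeneous structure of $X=G/\Gamma$ to reduce recurrence of $R$ to a return condition: for every $\epsilon>0$ one must produce $n\in R$ for which the group element $g_1^{n_1}\cdots g_d^{n_d}$ returns to within $\epsilon$ of $\Gamma$. Because $G_0$ is abelian, reading this in an exponential chart turns it into a Diophantine statement about the polynomial map $n\mapsto P(n)$ recording the coordinates of $g_1^{n_1}\cdots g_d^{n_d}$, a polynomial in $n=(n_1,\dots,n_d)$ of degree $\le s$ whose coefficients are the (typically irrational) frequencies of the system. The coordinates lying in $Y$ are already handled by the inductive recurrence; what remains is the top-degree, $G_s$-valued part, whose $y$-dependent summands can be absorbed by the freedom of moving the base point in $Y$ (exactly as the term $nx$ is exploited in the Anzai skew product), leaving a base-point-independent constraint of the form ``$\sum c\,n_{i_1}\cdots n_{i_s}\,\alpha$ is small $\pmod 1$''.

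The main obstacle, and the heart of the paper, is that Bohr recurrence of $R$ directly yields only the smallness of \emph{linear} expressions $\sum_i c_i n_i\alpha_i$, whereas the remaining constraint is genuinely multilinear of degree $s$. This is precisely what the tools of \cref{Sec4} are designed to bridge. I would first replace $R$ by an essential set of Bohr recurrence (\cref{Essential-set-of-Bohr-recurrence}), so that no coordinate is forced to vanish and every monomial $n_{i_1}\cdots n_{i_s}$ is genuinely present, the vanishing coordinates being stripped off by an auxiliary induction on $d$. Then, via Bohr correlations (\cref{Bohr-Correlations}), I would split according to how the coordinates of $R$ are correlated; the decisive case is complete independence (\cref{Complete-independence}), where the only correlations are $0$ or irrational, so that no resonance forces the degree-$s$ frequencies to stay bounded away from the integers. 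In that case an iterated differencing (van der Corput type) argument reduces the degree-$s$ Bohr-type constraint to lower-degree constraints of the same kind, which closes the induction; rational correlations are dealt with by a change of coordinates exposing the independent directions and lowering the effective dimension. The abelian hypothesis on $G_0$ is exactly what prevents the frequencies in $P$ from entangling across degrees, and is the reason the method stops at Weyl systems rather than reaching all minimal $\Z^d$-nilsystems.
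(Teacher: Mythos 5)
Your induction scheme (induct on $s$, quotient by the last commutator group $G_s$, treat the base case as a rotation) and your supporting cast (reduction to essential/ordered sets, Bohr correlations, complete independence, change of coordinates for rational correlations) match the paper's architecture. But at the decisive step — lifting recurrence through the extension $X\to X/G_s$ — your mechanism is wrong, and the error is instructive. You claim that the base-point freedom absorbs only the $y$-dependent summands of the top-degree part, leaving a residual, base-point-independent constraint of the form $\|c\,n_{i_1}\cdots n_{i_s}\alpha\|_{\T}<\epsilon$, which you then propose to kill by iterated van der Corput differencing. Neither half of this works. First, differencing cannot be run along a bare set of Bohr recurrence: recurrence is an existence statement at a single $\boldsymbol{n}\in R$, there is no average over $R$ to difference, and differences of elements of $R$ bear no relation to $R$; Bohr recurrence only controls \emph{linear} phases, and it does so by intersecting $R$ with Bohr sets, an operation that has no analogue for the polynomial phases you would need. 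Second, and more importantly, no such residual constraint actually arises: in the paper the perturbation freedom absorbs the \emph{entire} $G_s$-discrepancy. Concretely, the induction hypothesis on $\tilde X=X/G_s$ (lifted through the open projection, \cref{open-nilmap}, together with \cref{Bands-Prop} to take $\boldsymbol{n}$ large) produces $x$ near $e_X$ with $T_1^{n_1}\cdots T_d^{n_d}x$ near $v\cdot e_X$ for some $v\in G_s$; one then replaces $x$ by $y=hx$ with $h\in G_{s-1}$ small, and the effect on the return is the product of commutators $[h^{-1},\tau_1^{n_1}]\cdots[h^{-1},\tau_d^{n_d}]$, which in the affine picture equals $\sum_{i=1}^d n_i(\boldsymbol{M}_i-\boldsymbol{I})\boldsymbol{h}$ — \emph{linear} in $\boldsymbol{h}$, no degree-$s$ phase in sight.

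The genuine difficulty, which your proposal never engages, is that the \emph{same} small $h$ must simultaneously realize prescribed targets in all $d$ coordinates: one needs, for $\boldsymbol{n}$ in a suitable subset $R_\epsilon\subseteq R$, that $(n_1\boldsymbol{h},\ldots,n_d\boldsymbol{h})$ can approximate an arbitrary $(\boldsymbol{w}_1,\ldots,\boldsymbol{w}_d)\in\T^{dq}$ with $\boldsymbol{h}$ small. For $d=1$ this is trivial (that is exactly the Anzai skew-product absorption you invoke), but for $d\geq 2$ it is false for general $R$ and is precisely the content of \cref{Theorem-B}, proved via the correlation machinery and the complete-independence hypothesis (\cref{Complete-independence}); it is then packaged as \cref{lema-previo-resultado-principal} and fed into the proof of \cref{Theorem-A}. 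Your one-line gloss that complete independence means ``no resonance forces the degree-$s$ frequencies to stay bounded away from the integers'' is a heuristic about the wrong object (polynomial phases) rather than a proof of the simultaneous-approximation statement that is actually needed. In short: you underestimate what the perturbation can absorb (everything, with no residual polynomial constraint) and overestimate what Bohr recurrence plus differencing can deliver (nothing, for degree-$s$ phases along a sparse set); filling the gap requires stating and proving \cref{Theorem-B}, which is the paper's main new ingredient. The remaining reductions you skip (connectedness via \cref{prop2} and the Frantzikinakis--Kra conjugacy, transferring $R$ under the corresponding integer matrix via \cref{prop3.4}) are minor by comparison but would also need to be made explicit.
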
 

The key ingredient to prove \cref{Theorem-A} is the following result of independent interest in the study of sets of Bohr recurrence. The proof, together with the necessary definitions, is given in \cref{Sec4}.

\begin{thmx}\label{Theorem-B}
Let $R$ be an essential and ordered set of $\Z^d$-Bohr recurrence with the property of complete independence. Then, for all $r\in \N$ and $\epsilon>0$ there exists a set of Bohr recurrence $R_\epsilon\subseteq R$ such that for any $\mathbf{n}=(n_1,\ldots,n_d)\in R_\epsilon$ and $\mathbf{w}_1,\ldots,\mathbf{w}_d\in \T^r$ there is $y\in \R^r$ such that
\begin{align*}
||y||_{\R^r}<\epsilon ~~ \text{ and }~~ ||(n_1y,\ldots,n_d y)-(\mathbf{w}_1,\ldots,\mathbf{w}_d)||_{\T^{d\cdot r}}<\epsilon.
\end{align*}

\end{thmx}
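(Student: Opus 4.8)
The plan is to reduce the $r$-dimensional approximation to $r$ independent one-dimensional problems, recast each as the $\delta$-density of a short orbit arc, characterise that density by a Diophantine condition on $\mathbf{n}$, and finally use the Bohr-correlation machinery of \cref{Sec4} to verify that the set $R_\epsilon$ cut out by this condition is still a set of Bohr recurrence.

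First I would reduce to $r=1$. Writing $y=(y^{(1)},\dots,y^{(r)})\in\R^r$ and $\mathbf{w}_j=(w_j^{(1)},\dots,w_j^{(r)})$, the two requirements decouple across the coordinate $k$: up to the equivalence of the $\ell^2$ and $\ell^\infty$ norms on $\R^r$ and $\T^{dr}$, it suffices to produce, for each $k$, a real number $t=y^{(k)}$ with $|t|<\epsilon/\sqrt{r}$ and $\|n_j t-w_j^{(k)}\|_\T<\epsilon/\sqrt{r}$ for all $j$. Since the set $R_\epsilon$ I will build depends only on $\mathbf{n}$ and not on the targets, the same $R_\epsilon$ handles every coordinate simultaneously. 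Thus the whole statement follows from the claim: for every $\delta>0$ there is a set of Bohr recurrence $R_\delta\subseteq R$ such that for every $\mathbf{n}\in R_\delta$ and every $(w_1,\dots,w_d)\in\T^d$ there is $t\in\R$ with $|t|<\delta$ and $\max_j\|n_j t-w_j\|_\T<\delta$.

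For fixed $\mathbf{n}$ this one-dimensional claim is exactly the assertion that the arc $A_{\mathbf{n},\delta}=\{(n_1 t,\dots,n_d t)\bmod\Z^d:|t|<\delta\}$ is $\delta$-dense in $\T^d$. I would establish density through a quantitative equidistribution estimate: testing against the characters $x\mapsto e^{2\pi i\langle a,x\rangle}$, $a\in\Z^d$, one sees that $\chi_a$ winds $\sim\delta\,|\langle a,\mathbf{n}\rangle|$ times along the arc, so, by an Erd\H{o}s--Tur\'an / geometry-of-numbers argument, $A_{\mathbf{n},\delta}$ is $\delta$-dense as soon as $|\langle a,\mathbf{n}\rangle|=\big|\sum_j a_j n_j\big|\ge K_d/\delta$ for every nonzero $a\in\Z^d$ with $\|a\|_\infty\le L_d/\delta$, where $K_d,L_d$ are dimensional constants. (The instances $a=e_j$ force $|n_j|\gtrsim 1/\delta$, which is precisely what each $n_j t$ needs in order to sweep out all of $\T$.) I then define $R_\delta$ to be the set of $\mathbf{n}\in R$ satisfying these finitely many inequalities.

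The heart of the proof --- and the step I expect to be the main obstacle --- is showing that $R_\delta$ is still a set of Bohr recurrence. Passing from $R$ to $R_\delta$ discards the ``bad'' vectors for which some nonzero $a$ with $\|a\|_\infty\le L_d/\delta$ makes $|\langle a,\mathbf{n}\rangle|$ small; such a relation $\sum_j a_j n_j=\Oh(1/\delta)$ amounts, after dividing by a coordinate of maximal modulus and passing to a subset along which the entries tend to infinity, to a nonzero rational Bohr correlation between the coordinates indexed by the support of $a$. Here I would invoke the property of complete independence, which by definition only permits correlations equal to $0$ or to an irrational number, to conclude that no such relation can persist along a Bohr-recurrent portion of $R$; essentiality (no vanishing coordinates) and orderedness let me control the finitely many candidate directions $a$ one at a time. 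Concretely, using the Bohr-correlation results of \cref{Sec4}, I would refine $R$ successively over the finitely many $a$, at each step retaining a set of Bohr recurrence on which $|\langle a,\mathbf{n}\rangle|$ is forced to be large, and take $R_\delta$ to be the final refinement. The delicate points will be making the implication ``bounded relation $\Rightarrow$ forbidden rational correlation'' precise when the support of $a$ has more than two indices, and ensuring that each refinement genuinely preserves Bohr recurrence rather than mere non-emptiness; this is exactly where the machinery developed in \cref{Sec4} for essential, ordered, completely independent sets does the work.
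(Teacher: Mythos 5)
Your first two reductions are sound: the problem does decouple over the $r$ coordinates of $y$, and the $\delta$-density of the arc $\{t\mathbf{n}\bmod\Z^d : |t|<\delta\}$ is indeed governed by finitely many Diophantine conditions of the form $|\langle a,\mathbf{n}\rangle|\ge K_d/\delta$ for $a$ in a finite set (via Erd\H{o}s--Tur\'an--Koksma or transference; your thresholds may need to be polynomial in $1/\delta$ rather than linear, but that is harmless). The genuine gap is exactly where you predicted it, and as stated it is fatal: the claim that ``no such relation can persist along a Bohr-recurrent portion of $R$'' rests on a misreading of \cref{Complete-independence}. Complete independence asserts only that \emph{some} vector $\boldsymbol{P}\in \mathcal{BC}(R)$ has (jointly) rationally independent nonzero entries; it constrains none of the other Bohr correlation vectors of $R$, i.e.\ none of the other Bohr-recurrent portions of $R$. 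Concretely, let $R_1=\{(2n,\lfloor \sqrt{2}\,n+1/2\rfloor) : n\in\Z\setminus\{0\}\}$, an essential and ordered set of $\Z^2$-Bohr recurrence by \cref{ej1-Bohr} with correlation $P_{1,2}=1/\sqrt{2}$, and let $R_2=\{(n,n) : n\in\Z\setminus\{0\}\}$, which is also of Bohr recurrence by \cref{prop:morphism_recurrence}. Then $R=R_1\cup R_2$ is essential, ordered, of Bohr recurrence, and has the property of complete independence (witnessed by the correlation vector coming from $R_1$); yet for $a=(1,-1)$ your ``bad'' set $\{\mathbf{n}\in R : |n_1-n_2|<K_d/\delta\}$ contains all of $R_2$ and is therefore itself a set of Bohr recurrence. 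So your refinement step cannot retain the good piece by arguing that the bad piece fails to be Bohr recurrent: partition regularity only guarantees that \emph{some} piece of the partition is Bohr recurrent, and it may be a bad one.

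The missing idea --- and the key move in the paper's own proof --- is to use the defining property of Bohr correlations (\cref{Bohr-Correlations}) \emph{before} imposing any Diophantine condition: restrict first to $R_{\boldsymbol{P},\epsilon'}$ for the completely independent vector $\boldsymbol{P}$, a set which is Bohr recurrent by the very definition of $\boldsymbol{P}\in\mathcal{BC}(R)$, with no partition argument needed. Inside $R_{\boldsymbol{P},\epsilon'}$ every element has all ratios $n_j/n_i$ within $\epsilon'$ of $P_{i,j}$, so every Bohr-recurrent subset has all of its correlation vectors $\epsilon'$-close to $\boldsymbol{P}$; your contradiction argument then does go through (choosing $\epsilon'$ small against the finitely many $a$, using \cref{Bands-Prop} to let coordinates tend to infinity, and a short recursion on the support of $a$ to handle vectors supported on coordinates $j$ with $P_{i,j}=0$). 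In the example above, this restriction is precisely what excises $R_2$. With that repair your route becomes viable, and it would still differ from the paper in the density step: the paper never passes through arc-density or transference, but instead constructs $y=\sum_{i}\boldsymbol{y}_i/n_i$ explicitly by a backwards induction over the coordinates, using Kronecker's theorem for the tuples $(P_{l,j})_{j}$, and takes $R_\epsilon=R_{\boldsymbol{P},\epsilon/M}\cap B(0,M)^c$, whose Bohr recurrence costs nothing.
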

This result is key to approximate fibers in $\Z^d$-affine $s$-step nilsystems, allowing us to lift recurrence from lower-step nilfactors.

While our method works well within the family of $\Z^d$-Weyl systems, extending it to general nilsystems remains a challenge. Currently,  we are exploring a direct proof using spectral methods for the general case. To be more precise, a generalization of Theorem 1.4 from \cite{Ackelsberg_Richter_Shalom_maximal_spectral_nil:2023} to $\Z^d$-actions would be key for a measurable proof of the general case.

\section{Notation, terminology, and prerequisites}\label{Ch1}

\subsection{Topological Dynamical Systems}
A topological dynamical system, or just an $H$-system, is a pair $(X,H)$ where $X$ is a compact metric space and $H$ is a locally compact group of homeomorphisms of the space $X$ into itself, which for our purposes we will suppose abelian. The action of an element $g \in H$ on a point $x \in X$ is denoted by $g(x)$ or $gx$, and the orbit of a point $x$ under the action of $H$ is represented by $\mathcal{O}_H(x) = \{hx: h \in H\}$ (or simply $\mathcal{O}(x)$ if the acting group is clear).

A topological dynamical system $(X,H)$ is {\em transitive} if there is a point $x\in X$ which orbit $\mathcal{O}_H(x)$ is dense in $X$. We call such points {\em transitive points}. A stronger condition than transitivity is {\em minimality}. A system $(X, H)$ is said to be {\em minimal} if every point in $X$ is transitive. In other words, there are no nonempty closed $H$-invariant proper subsets of $X$. A subset $Y \subseteq X$ is  $H$-invariant if $HY = Y$.

We say that $(Y, H)$ is a {\em subsystem} of $(X, H)$ if $Y$ is a nonempty closed $H$-invariant subset of $X$. An essential fact is that every dynamical system has a minimal subsystem.

A system $(X,H)$ is  \textit{equicontinuous} if for all $\epsilon>0$ there exists $\delta>0$ such that for every $x,y\in X$, $d_X(x,y)<\delta$ implies $d_X(h x, h y)<\epsilon$, for all $h\in H$ ($d_X$ denotes a metric in $X$). A pair $(x,y)\in X\times X$ is called \textit{proximal} if there exists a sequence $(h_n)_{n\in \N}\subseteq H$ such that
$ d(h_nx,h_ny)\to 0$ as $n$ goes to infinity. If there is no such sequence, the pair $(x, y)$ is said to be \textit{distal}. A system is distal if it does not have any nontrivial proximal pairs.

A factor map between topological dynamical systems $(X, H)$ and $(Y, H)$ is an onto continuous map $\pi\colon X \to Y$ such that  $\pi \circ h = h \circ \pi$ for all $h \in H$. In this case, $X$ is called an extension of $Y$ and $Y$ a factor of $X$. If $\pi$ is also injective, we say that $(X, H)$ and $(Y, H)$ are topologically conjugate (or just conjugate), and $\pi$ is called a conjugacy. An extension map $\pi:X\to Y$ is called \textit{proximal} if every pair of point $(x_1,x_2)\in X\times X$ with $\pi(x_1)=\pi(x_2)$ are proximal.

Let $\{(X_n,H)\}_{n\in \N}$ be a sequence of topological dynamical systems such that for each $n\in \N$  there is a factor map $\pi_{n+1}:(X_{n+1},H)\to (X_{n},H)$. The \textit{inverse limit} of $\{(X_n,H)\}_{n\in \N}$ of these factors is defined as the set
$$\Bigl\{ (x_n)_{n\in \N} \in \prod_{n\in \N} X_n ~ :~ \pi_{n+1}(x_{n+1})=x_n~ \text{ for all } n\in \N\Bigr\},$$
equipped with the product topology and the diagonal action of $H$ given by \hfill\break $h(x_n)_{n\in \N}=(hx_n)_{n\in \N}$ for every $h\in H$.

A measure preserving system (m.p.s. for short) $(X, \mathcal{X},\mu, H)$ consists of a probability space $(X, \mathcal{X}, \mu)$ endowed with a $\mathcal{X}$-measurable and measure preserving action of $H$. That is, for any $A \in \mathcal{X}$ and $h\in H$, $hA\in \mathcal{X}$ and $\mu(hA) = \mu(A)$. A set $A \in \mathcal{X}$ is \textit{invariant} if $\mu(A \Delta hA) = 0$ for all $h\in H$. A m.p.s. $(X, \mu, H)$ is called \textit{ergodic} if there are no nontrivial invariant sets, meaning that if $A \in \mathcal{X}$ and $\mu(A \Delta hA) = 0$ for all $h \in H$, then $\mu(A) \in \{0, 1\}$.

\subsection{Nilsystems}\label{nil}
Let $G$ be a group. For $a,b\in G$ we define their commutator as $[a,b]=aba^{-1}b^{-1}$. Given subgroups $A,B\subseteq G$, we write $[A,B]$ to denote the subgroup of $G$ generated by $\{[a,b] : a\in A,b\in B\}$. We define recursively the commutator subgroups $G_j$ of $G$ as:
$$G_1=G,~~~ G_{j+1}=[G,G_j], ~~ j\geq 1. $$
We say that a group $G$ is nilpotent of order $s$ or $s$-step nilpotent if $G_{s+1}=\{e_G\}$, where $e_G$ is the identity of $G$.

Let $G$ be an $s$-step nilpotent Lie group and $\Gamma$ a discrete cocompact subgroup of $G$ (i.e., $\Gamma$ is countable and $G/\Gamma$ is compact). Then, we say that the compact nilmanifold $X=G/\Gamma$ is an \textit{$s$-step nilmanifold.} Let $H\subseteq G$ be a subgroup and consider the action of $H$ given by left translations. Then $(X,H)$ is a $H$-system, also called an $s$-step nilsystem, or to be more precise, an $s$-step $H$-nilsystem. We will always consider the Haar measure $\mu$ of $(X,H)$ (the unique invariant measure for the action of left translations).

\begin{ej}[Rotations]
Let $X$ be a compact abelian group, $H$ be a group, and $\varphi\colon H\to X$ be a homomorphism. Consider the action of $H$ on $X$ induced by $\varphi$, that is, $h\cdot x= \varphi(h)+x$, for all $h\in H$ and $x\in X$, where the group operation of $X$ is written in additive notation. Then $(X,H)$ is a $1$-step nilsystem, and this system is usually called a rotation.  
\end{ej}

For a nilpotent Lie group $G$ we denote by $G_0$ the connected component of the identity $e_G$ in $G$. Then, $G_0$ is an open normal subgroup of $G$ (see \cite[Section 4.1]{Host_Kra_Maass_variations_top_recurrence:2016}). Various representations of nilsystems are available, some of them specifically tailored to address specific problems and contexts.  One possible choice allows us to assume that $G$ is simply connected, which means
that $G_0$ is simply connected (see \cite[Chapter 10, Theorem 13]{Host_Kra_nilpotent_structures_ergodic_theory:2018}). Another property of nilsystems is that they are all distal (\cite[Chapter 4, Theorem 3]{Auslander_Green_Hahn_flows_homogeneous:1963}).

Now we give some important structural properties of nilsystems that will be useful in the next chapters.

\begin{teo}[\cite{Parry_ergodic_affine_nil:1969}]\label{Parry-theorems}
Let $(X=G/\Gamma, H)$ be an $s$-step nilsystem. 
\begin{enumerate}
\item The groups $G_j$, $j\geq 2$, are connected. In particular, $G_j\subseteq G_0$.
\item     Let $N=\langle G_0, H\rangle$ and $Z= X/[N,N]$. Then, the action of $H$ is ergodic with respect to its unique invariant measure if and only if $X$ is minimal with respect to the action of $H$, and if and only if $Z$ is minimal with respect to the action of $H$.
\end{enumerate}
\end{teo}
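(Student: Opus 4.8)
The two parts call for different tools, so I would prove them separately: part (1) is Lie-theoretic, while part (2) reduces everything to a rotation on a compact abelian group.

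For part (1), the plan is an induction on $j$ whose engine is the elementary observation that the subgroup generated by a family of connected subsets of $G$, all sharing the point $e_G$, is itself connected (each finite product of such sets is a continuous image of a connected space, hence connected, and $G_j$ is the increasing union of these products). I would first record the final clause of (1): any connected subgroup containing $e_G$ lies in the identity component $G_0$, so once connectedness of $G_j$ is proved the inclusion $G_j\subseteq G_0$ comes for free, and it suffices to prove connectedness. For the inductive step, suppose $G_j$ is connected for some $j\geq 2$. For each fixed $g\in G$ the map $x\mapsto [g,x]$ is continuous and sends the connected set $G_j$ to a connected subset of $G$ containing $e_G$ (the image of $x=e_G$); since $G_{j+1}=[G,G_j]$ is generated by the union over $g\in G$ of these sets, it is connected. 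The genuine difficulty is the base case $G_2=[G,G]$, where both slots of the commutator run over the possibly disconnected group $G$, so connectedness is not formal and must be read off from the nilmanifold structure. I would extract it from the representation in which $G_0$ is (simply) connected, identifying $G_2$ with the analytic subgroup attached to the second term of the lower central series of $\mathrm{Lie}(G)$. I expect this base case, together with the verification that the $G_j$ are moreover closed, to be where the real content of part (1) sits.

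For part (2), I would realize $Z=X/[N,N]$ as a rotation on a compact abelian group. Applying part (1) to $N=\langle G_0,H\rangle$ (which has identity component $G_0$, since $G_0$ is open in $N$), the subgroup $[N,N]$ is connected, closed, and normalized by $H\subseteq N$, so $Z$, the quotient of $X$ by the $[N,N]$-action, is a well-defined factor of $(X,H)$; since $N/[N,N]$ is abelian, $H$ acts on $Z$ by a translation on a compact abelian group. I would then establish the equivalences by chaining three links. First, for the abelian rotation $Z$, minimality, ergodicity, and density of the subgroup generated by the acting elements are all equivalent, by the classical Fourier/Weyl criterion for rotations on compact abelian groups. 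Second, $X$ is ergodic if and only if $Z$ is, because the maximal abelian (Kronecker) rotation factor of a nilsystem carries all of its invariant information: any nonconstant invariant function would have to descend through the connected commutator fibres to $Z$. Third, $X$ is minimal if and only if $Z$ is: the forward implication is automatic since $Z$ is a factor, and for the converse I would invoke that nilsystems are distal (recalled above) together with the fact that orbit closures in a nilsystem are sub-nilmanifolds, so an orbit that projects densely into $Z$ must already fill $X$.

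The main obstacle I anticipate is the lifting direction from $Z$ to $X$, for both minimality and ergodicity. The abelian factor $Z$ only records the degree-one data, and the work is to propagate density, and the absence of invariant sets, upward through the connected fibres $[N,N]$; this is exactly where the distality of nilsystems and the homogeneous (sub-nilmanifold) structure of orbit closures become indispensable, and where I would concentrate the argument.
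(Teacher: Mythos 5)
A preliminary remark: the paper offers no proof of this statement --- it is quoted from Parry (1969) as background --- so your attempt can only be measured against the standard proofs in the literature (Parry, Auslander--Green--Hahn, Leibman, Host--Kra). Against those, your part (1) has a genuine gap exactly where you locate the content, and the tool you propose there would fail. The inductive step is correct, but the base case cannot be done by identifying $G_2=[G,G]$ with the analytic subgroup of $\mathrm{Lie}(G)$ attached to the second term of the lower central series: that identification is valid only when $G$ is connected, and for connected $G$ part (1) is already trivial by your own engine, since both slots of the commutator then range over the connected set $G$. The whole difficulty is that $G$ may be disconnected, and then $[G,G]$ is not determined by $\mathrm{Lie}(G)$. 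Worse, without a normalization the statement is simply false: let $\mathcal{H}$ be the discrete Heisenberg group with generators $x,y$, take $G=\mathcal{H}\times\T$, $\Gamma=\mathcal{H}\times\{0\}$, and $H$ generated by $(x,\alpha)$ with $\alpha$ irrational; then $(X,H)$ is a minimal nilsystem (an irrational rotation on $X\cong\T$), while $G_2=\langle [x,y]\rangle\times\{0\}\cong\Z$ is disconnected. Every proof in the literature --- and this paper itself, at the start of its subsection on connectedness in nilsystems --- first normalizes to $G=\langle G_0,\tau_1,\ldots,\tau_d\rangle$. Under that normalization $G/G_0$ is abelian (because $H$ is an abelian subgroup of $G$), so the nilpotent commutator identities write any $[a,b]$, $a,b\in G$, as a product of conjugates of commutators having at least one entry in $G_0$; your own ``subgroup generated by connected sets through the identity'' argument then settles the base case with no Lie theory at all. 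Omitting the normalization is not a presentational slip: without it there is nothing to prove, because the statement fails.

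In part (2), your three-link chain is exactly the classical architecture, but both upward transfers are asserted rather than proved, and they \emph{are} the theorem. ``Any nonconstant invariant function would have to descend through the connected commutator fibres to $Z$'' restates the conclusion; the known argument (Green's criterion, Parry) inducts on the nilpotency class and Fourier-decomposes an invariant function along the central fibres $G_s$, connectedness of $G_s$ --- i.e.\ part (1) --- being what forces the nontrivial vertical frequencies to vanish. Likewise, lifting minimality directly via orbit closures (``an orbit projecting densely into $Z$ must already fill $X$'') assumes what is to be shown; the standard route deduces it from the ergodic statement: ergodicity of the fully supported Haar measure yields a dense orbit, and distality of nilsystems upgrades a single dense orbit to minimality of $X$. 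So your proposal is a correct map of the known proof in which precisely the two substantive steps --- the base case of (1) and the upward transfers in (2) --- are missing.
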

\begin{prop}[\cite{Host_Kra_nilpotent_structures_ergodic_theory:2018}]\label{open-nilmap}
    Let $(X=G/\Gamma,H)$ be an $s$-step nilsystem. Then the canonical factor $\pi\colon X\to X/G_s$ is open.
\end{prop}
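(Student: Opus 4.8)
The plan is to recognize $\pi$ as the orbit map of a group action and to invoke the standard fact that such maps are automatically open. Since $G$ is $s$-step nilpotent, $G_{s+1}=[G,G_s]=\{e_G\}$, which means $G_s$ is contained in the center of $G$; in particular $G_s$ is a normal subgroup and $G_s\Gamma$ is a subgroup of $G$. The factor $X/G_s$ is, by definition, the orbit space of the $G_s$-action on $X=G/\Gamma$ by left translations $g_s\cdot(g\Gamma)=g_sg\Gamma$, equipped with the quotient topology, and $\pi$ is the corresponding orbit map. Because $G_s$ is central, the orbit of $g\Gamma$ is $gG_s\Gamma/\Gamma$, so this orbit space is naturally identified with $G/(G_s\Gamma)$.

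The core of the argument is then the following elementary observation. Let $U\subseteq X$ be open. Since each left translation $x\mapsto g_s x$ (for $g_s\in G_s$) is a homeomorphism of $X$, the saturation
\[
\pi^{-1}(\pi(U))=\bigcup_{g_s\in G_s} g_s U
\]
is a union of open sets and hence open in $X$. By the definition of the quotient topology on $X/G_s$, a set is open precisely when its preimage under $\pi$ is open; therefore $\pi(U)$ is open. As $U$ was arbitrary, $\pi$ is an open map. This reasoning is purely formal and uses nothing beyond the fact that $G_s$ acts by homeomorphisms.

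The only point requiring a little care — and what I regard as the main, albeit minor, obstacle — is to justify that $X/G_s$ is the genuine nilmanifold $G/(G_s\Gamma)$ rather than merely an abstract quotient, that is, that $G_s\Gamma$ is closed in $G$. This is where the structure theory enters: by \cref{Parry-theorems} the group $G_s$ is connected and contained in $G_0$, and it is rational with respect to $\Gamma$, so $G_s\cap\Gamma$ is cocompact in $G_s$ and $G_s\Gamma$ is closed, making $G_s\Gamma/\Gamma$ a compact (subtorus) orbit in $X$. With this identification in place, the quotient topology on $G/(G_s\Gamma)$ coincides with its manifold topology, and the openness computation above applies verbatim. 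I would record the general principle that orbit maps of continuous group actions are open as a standalone remark, since it is exactly what drives the proof.
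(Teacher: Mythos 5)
Your proof is correct, and it is worth noting up front that the paper itself gives no argument for this proposition---it is quoted directly from Host--Kra's book---so there is no in-paper proof to compare against. Your core argument is the standard one: since each $g_s\in G_s$ acts on $X$ by a homeomorphism, the saturation $\pi^{-1}(\pi(U))=\bigcup_{g_s\in G_s} g_s U$ of an open set $U$ is open, hence $\pi(U)$ is open in the quotient topology; and your use of the centrality of $G_s$ (from $G_{s+1}=\{e_G\}$) to identify the orbit space with $G/(G_s\Gamma)$ is sound. This is essentially the proof one finds in the cited source (the projection is a $G_s$-orbit map, equivalently a bundle with compact fiber $G_s/(G_s\cap\Gamma)$). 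One attribution slip to fix: the rationality of $G_s$---that $G_s\cap\Gamma$ is cocompact in $G_s$, whence $G_s\Gamma$ is closed---is \emph{not} part of \cref{Parry-theorems} as stated in the paper, which only gives connectedness of the $G_j$, $j\geq 2$, and the ergodicity--minimality equivalence. Rationality of the lower central series subgroups relative to $\Gamma$ is a separate Malcev-type fact (available in Host--Kra's book), and it is genuinely needed in your argument, since it is what guarantees that the quotient topology on $X/G_s$ agrees with the nilmanifold topology on $\tilde{G}/\tilde{\Gamma}$ used in \cref{Sec5}. Your instinct to single this out as the ``only point requiring care'' is exactly right---just give it its correct source rather than folding it into \cref{Parry-theorems}.
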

\begin{cor}[\cite{Leibman_pointwise_conv_polynomial_nil:2005}]\label{connected-implies-totally}
Let $(X=G/\Gamma, H)$ be an $s$-step nilsystem with $X$ connected. Then, the action of $H$ is ergodic with respect to its unique invariant measure on $X$ if and only if it is ergodic with respect to its unique invariant measure on the maximal torus factor $X/[G_0,G_0]$ of $X$. 
\end{cor}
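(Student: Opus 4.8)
The plan is to prove the two implications separately, dispatching the forward one immediately and reducing the reverse one to \cref{Parry-theorems}. Throughout, write $G_0$ for the identity component of $G$, set $K := [G_0,G_0]$ so that $X/K$ is the maximal torus factor, and let $N = \langle G_0, H\rangle$ and $Z = X/[N,N]$ be as in \cref{Parry-theorems}. The forward implication needs no structure theory: if the $H$-action on $X$ is ergodic, then its action on any factor is ergodic, since the preimage of an invariant set is invariant; in particular the action on $X/K$ is ergodic.

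For the reverse implication I would first extract the structural consequences of connectedness. As $G_0$ is open, $G_0\Gamma$ is an open subgroup and $G/(G_0\Gamma)$ is discrete; since $X = G/\Gamma$ is connected and surjects continuously onto $G/(G_0\Gamma)$, this quotient is trivial, i.e.\ $G = G_0\Gamma$. The subgroup $K = [G_0,G_0]$ is characteristic in $G_0$, hence normal in $G$, and $G_0/K$ is a connected abelian Lie group; combined with $G = G_0\Gamma$, this lets one identify $X/K$ with a compact connected abelian group on which $H$ acts by a rotation $\varphi\colon H \to X/K$. For such group rotations, ergodicity, minimality, and density of $\varphi(H)$ all coincide, by the usual eigenfunction/character argument.

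Now suppose the action on $X/K$ is ergodic; by the previous paragraph $X/K$ is then minimal. Since $G_0 \subseteq N$ we have $K = [G_0,G_0] \subseteq [N,N]$, so the projection $G/(K\Gamma) \to G/([N,N]\Gamma)$ descends to an $H$-equivariant factor map $X/K \to Z$. A factor of a minimal system is minimal, so $Z$ is minimal. Finally \cref{Parry-theorems} converts minimality of $Z$ into minimality of $X$, and minimality of $X$ back into ergodicity of the $H$-action on $X$, completing the proof.

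The main obstacle I anticipate is structural rather than dynamical: one must check carefully that $X/K$ genuinely is a compact abelian group rotation, so that ergodicity and minimality coincide there, and that $Z$ sits below $X/K$ as an honest topological factor. Once connectedness is used to obtain $G = G_0\Gamma$ and the normality of $K$, these verifications are essentially bookkeeping, and all the genuine dynamical content is supplied by \cref{Parry-theorems}, which transports minimality from the torus factor back up to $X$.
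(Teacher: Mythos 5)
Your forward implication and the overall plan of funneling everything through \cref{Parry-theorems} are sound, and the structural preliminaries are correct: connectedness does give $G=G_0\Gamma$, $K:=[G_0,G_0]$ is normal in $G$, and $K\subseteq [N,N]$ does give an $H$-equivariant factor map $X/K\to Z$. The genuine gap is the assertion that $H$ acts on $X/K$ \emph{by a rotation}, which is exactly what you use (via the character/eigenfunction argument) to pass from ergodicity of $X/K$ to its minimality. Under your identification $X/K\cong G_0'/(G_0'\cap\Gamma')$ (where $G'=G/K$, $\Gamma'=\Gamma K/K$), an element $h=h_0\gamma_h$ with $h_0\in G_0$, $\gamma_h\in\Gamma$ acts by $\bar g\mapsto \overline{h_0\,\gamma_h g\gamma_h^{-1}}$: a rotation composed with the automorphism induced by conjugation by $\gamma_h$, and this automorphism is nontrivial modulo $G_0'\cap \Gamma'$ in general, because $[G,G_0]\not\subseteq[G_0,G_0]$. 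Indeed, for the systems this paper is about, your claim is maximally false: in a $\Z^d$-Weyl system $G_0$ is abelian, so $K=\{e_G\}$ and $X/K=X$, and your claim would say that every connected Weyl system \emph{is} a group rotation. The skew product $T(x,y)=(x+\alpha,y+x)$ on $X=\T^2$ is a counterexample: it is a connected $2$-step nilsystem with $G=\Z\ltimes\R^2$ (the generator of $\Z$ acting by $\bigl(\begin{smallmatrix}1&0\\1&1\end{smallmatrix}\bigr)$) and $\Gamma=\Z\ltimes\Z^2$, so here the induced action on $X/K=X$ is the skew product itself, which is not (conjugate to) a rotation, and the character argument cannot produce ``ergodic $\Rightarrow$ minimal'' for it.

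The repair stays entirely inside the paper's toolkit: you never need $X/K$ to be a rotation, only that ``ergodic $\iff$ minimal'' holds for it, and this is true because $X/K=(G/K)/(\Gamma K/K)$ is again a nilsystem with $H$ acting by translations ($K$ is a normal subgroup of $G$, and its closedness is already implicit in the corollary's statement), so \cref{teo11HKM} applies to it. The corrected chain is then: ergodicity of $H$ on $X/K$ gives, by \cref{teo11HKM}, minimality of $X/K$; its factor $Z$ is therefore minimal; and \cref{Parry-theorems}\,(2) converts minimality of $Z$ into ergodicity of $H$ on $X$. (Equivalently, pass ergodicity to the factor $Z$ first and apply \cref{teo11HKM} there.) Note finally that the paper offers no proof to compare against --- the corollary is quoted from Leibman --- so the above assesses your argument purely on its own terms.
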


The next theorem is a well-known result in the field of nilsystems and it is a direct consequence of \cref{Parry-theorems} \textit{(2)}.

\begin{teo}\label{teo11HKM}
A nilsystem $(X,H)$ is ergodic with respect to its unique invariant measure  if and only if it is minimal.
\end{teo}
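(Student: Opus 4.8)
The plan is to read the statement off directly from \cref{Parry-theorems} \textit{(2)}, of which it is merely a streamlined restatement. First I would instantiate the notation of that theorem for the nilsystem $(X,H)$ under consideration: put $N=\langle G_0,H\rangle$ and $Z=X/[N,N]$. With these auxiliary objects in place, \cref{Parry-theorems} \textit{(2)} applies verbatim and yields the threefold chain of equivalences asserting that the action of $H$ is ergodic with respect to its unique invariant measure if and only if $X$ is minimal, and if and only if $Z$ is minimal.

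The second and final step is simply to retain the first two conditions of this chain and discard the third. Formally, from ``ergodic $\iff$ $X$ minimal $\iff$ $Z$ minimal'' one extracts the single biconditional ``ergodic $\iff$ $X$ minimal,'' which is exactly the assertion of the theorem. Both implications are already supplied by Parry's result, so no further argument is required; the intermediate system $Z$ is an artifact of the formulation in \cref{Parry-theorems} and plays no role once the equivalence is phrased solely in terms of $X$.

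Accordingly, there is no genuine obstacle to overcome here: the entire mathematical content resides in \cref{Parry-theorems}, and the present statement is a convenience repackaging that suppresses the auxiliary construction. The only point worth flagging is a matter of interpretation rather than proof, namely that the phrase ``its unique invariant measure'' refers to the Haar measure singled out in \cref{nil}, so that the notion of ergodicity appearing in the statement is unambiguous and matches the one used in \cref{Parry-theorems}.
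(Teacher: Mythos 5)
Your proposal is correct and coincides with the paper's treatment: the paper states this theorem as a direct consequence of \cref{Parry-theorems} \textit{(2)} and offers no further argument, which is precisely the extraction of the first biconditional from Parry's chain of equivalences that you describe.
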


\subsection{Weyl Systems}\label{Weyl-section}
In this section, we introduce a central object of this work, the Weyl systems. Weyl systems are usually defined for $\Z$-actions (see \cite{Bergelson_Leibman_Lesigne_complexities_poly_Weyl_systems:2007,Frantzikinakis08,Kuca_notions_complexity_poly:2023} for definitions and motivations), but we extend its definition to $\Z^d$-actions in the natural way. Recall that an action of the additive group $\Z^d$ is given by $d$ commuting homeomorphisms of the space. 

\begin{defn}
 Let $(X=G/\Gamma, T_1,\ldots,T_d)$ be an $s$-step $\Z^d$-nilsystem. We say that $(X=G/\Gamma, T_1,\ldots,T_d)$ is a {\em $\Z^d$-Weyl system} if $G_0$ is abelian.  
\end{defn}

 It is a known fact that for any $s$-step nilsystem $(X, H)$, the closed orbit of any point $x \in X$ remains an $s$-step nilsystem (see for instance \cite[Theorem 1.3]{Shah_inv_measures_orbits_homogeneous_spaces_unipotent:1998}  or \cite[Section 3.2, Chapter 10]{Host_Kra_nilpotent_structures_ergodic_theory:2018}). This property is also true for Weyl systems. To be precise, if $(X=G/\Gamma, T_1,\ldots,T_d)$ is a $\Z^d$-Weyl system and $(Y=G^Y/\Gamma^Y, T_1,\ldots,T_d)$ is an orbit of $X$, then in the proof of Theorem 1.3 in \cite[Chapter 11, Theorem 9]{Host_Kra_nilpotent_structures_ergodic_theory:2018} it is shown that $G_0^Y$ is a subgroup of $G_0$ and therefore $G_0^Y$ is also abelian.
 
 A concrete example of Weyl systems are affine systems. An $s$-step $\Z^d$-nilsystem $(X,T_1,\ldots,T_d)$ is said to be {\em $\Z^d$-affine} if $X=\T^r$ for some $r\geq 1$ and the transformations $T_1,\ldots,T_d$ are defined by $T_i(x)=\boldsymbol{A}_ix+\boldsymbol{\alpha}_i$, where $\boldsymbol{A}_1,\ldots,\boldsymbol{A}_d$ are commuting unipotent matrices and $\boldsymbol{\alpha}_1,\ldots,\boldsymbol{\alpha}_d\in \T^r$. We will consider the representation $X=G/\Gamma$ where $(G,\circ)$ consists of the group of transformations of $\T^r$ generated by the group $\Gamma$ generated matrices by $\{\boldsymbol{A}_{i}\}_{i=1}^d$, and the group of translations of $\T^r$. Thus, every element of $g\in G$ is given by a transformation $x\in X \to \boldsymbol{A}_gx+\boldsymbol{\alpha}_g$, where $\boldsymbol{A}_g\in \Gamma$ and $\boldsymbol{\alpha}_g\in \T^r$. For two elements $g,h\in G$, the commutator $[g,h]$ is given by the transformation $x\in X \to x+(\boldsymbol{A}_g-\boldsymbol{I})\boldsymbol{\alpha}_h-(\boldsymbol{A}_h-\boldsymbol{I})\boldsymbol{\alpha}_g$. In consequence, we can assume without loss of generality that $G_j\subseteq \T^r$ for each $j\geq 2$.
 
 The following result by Frantzikinakis and Kra allows one to show that the family of connected $\Z^d$-Weyl systems corresponds to the family of affine systems.
\begin{prop}[{\cite[Proposition 3.1]{Frantzikinakis_Kra_averages_product_integrals:2005}}]\label{affine-equals-G0-abelian}
 Let $X = G/\Gamma$ be a connected nilmanifold such that $G_0$ is abelian. Then, any nilrotation $T_a(x) = ax$ on $X$ is  topologically conjugate to a unipotent affine transformation on some finite dimensional torus \footnote{In \cite{Frantzikinakis_Kra_averages_product_integrals:2005}, the result is stated for measurable isomorphism, but the same proof gives topological conjugacy.}.   
\end{prop}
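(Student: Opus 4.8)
\noindent\emph{Proof proposal.} The plan is to linearize the nilrotation by first identifying the connected nilmanifold with a torus and then reading off the induced affine map. Throughout I use that $G_0$ is an open normal subgroup of $G$ and, as recalled above, that we may pass to the simply connected model, so that $G_0$ is simply connected; being connected, simply connected and abelian, $G_0\cong\R^r$ with $r=\dim G_0$, and $\exp\colon \mathfrak g\to G_0$ is an isomorphism (here $\mathfrak g$ is the Lie algebra of $G_0$). I write $G_0$ additively in these logarithmic coordinates, so its group law becomes vector addition. Since $X=G/\Gamma$ is connected and $G_0$ is open, the image of $G_0$ in $X$ is nonempty, open and closed, hence equals $X$; equivalently $G=G_0\Gamma$. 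Thus the inclusion $G_0\hookrightarrow G$ induces a homeomorphism $G_0/(G_0\cap\Gamma)\cong G/\Gamma=X$, where $G_0\cap\Gamma$ is a discrete cocompact subgroup of $G_0\cong\R^r$, i.e.\ a full-rank lattice $L\subseteq\R^r$. Choosing a basis of $L$ gives a conjugacy $\Phi\colon X\cong\R^r/L\cong\T^r$.

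Next I would compute $T_a$ in these coordinates. Every point of $X$ has a representative $g\in G_0$, and, writing $a=a_0\gamma_a$ with $a_0\in G_0$ and $\gamma_a\in\Gamma$ (possible since $G=G_0\Gamma$), the normality of $G_0$ gives
$$T_a(g\Gamma)=ag\Gamma=(aga^{-1})\,a\,\Gamma=\bigl(c_a(g)\,a_0\bigr)\Gamma,$$
where $c_a(g)=aga^{-1}\in G_0$ and $c_a$ is the conjugation automorphism of $G_0$. In the additive coordinates this is the affine map $v\mapsto Av+w$, with linear part $A=\mathrm{Ad}(a)|_{\mathfrak g}$ (the differential of $c_a$) and translation $w=\log a_0$.

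It then remains to check that $A$ preserves $L$ and is unipotent; granting this, $A$ is a unipotent element of $GL_r(\Z)$ in the chosen basis, and $T_a$ is conjugate via $\Phi$ to the unipotent affine transformation $x\mapsto Ax+\alpha$ on $\T^r$, with $\alpha=w\bmod L$. For lattice preservation, take $\gamma\in G_0\cap\Gamma$ and write $a\gamma a^{-1}=a_0\gamma'a_0^{-1}$ with $\gamma':=\gamma_a\gamma\gamma_a^{-1}$; then $\gamma'\in\Gamma$, and $\gamma'\in G_0$ by normality, so $\gamma'\in G_0\cap\Gamma$, while $a_0\gamma'a_0^{-1}=\gamma'$ because $G_0$ is abelian; hence $c_a$ fixes $G_0\cap\Gamma$ and $A(L)=L$. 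For unipotence I use the commutator filtration: set $\mathfrak g_j=\mathrm{Lie}(G_j)$, so that $\mathfrak g=\mathfrak g_1\supseteq\mathfrak g_2\supseteq\cdots\supseteq\mathfrak g_{s+1}=0$, with each $G_j\subseteq G_0$ for $j\ge2$ by \cref{Parry-theorems}. For $X\in\mathfrak g_j$ the element $\exp\bigl(t(A-I)X\bigr)=c_a(\exp tX)\exp(-tX)=[a,\exp tX]$ (the first equality being valid as $G_0$ is abelian) lies in $G_{j+1}$ for all $t$, whence $(A-I)X\in\mathfrak g_{j+1}$; therefore $(A-I)\mathfrak g_j\subseteq\mathfrak g_{j+1}$ and so $(A-I)^s=0$.

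The main obstacle — and the only place where abelianness of $G_0$ is essential beyond forcing $X$ to be a torus — is the lattice-preservation step $c_a(G_0\cap\Gamma)=G_0\cap\Gamma$: the element $a$ need not normalize $\Gamma$, so a priori $c_a$ has no reason to fix $G_0\cap\Gamma$, and it is precisely the commutativity of $G_0$ that collapses $a_0\gamma'a_0^{-1}$ back to $\gamma'$. Secondary care is needed to confirm that $\Phi$ is a genuine homeomorphism (using openness of $G_0$) and that differentiating $c_a$ yields an honest linear map on $\R^r$, which is immediate once $G_0$ is taken simply connected and abelian.
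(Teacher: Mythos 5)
Your proof is correct, and it arrives at the same conjugacy as the paper by a slightly different construction. The paper's sketch goes in the opposite direction from yours: instead of passing to the simply connected model, it quotients $G$ by $\Gamma\cap G_0$ --- which is normal in $G$ by precisely the computation you perform for lattice preservation (writing $g=g_0\gamma$, conjugation by $\gamma$ preserves $\Gamma\cap G_0$ using normality of $G_0$, and conjugation by $g_0$ is trivial since $G_0$ is abelian) --- after which $G_0$ becomes a compact torus, the decomposition $g=g_0\gamma$ is unique, and $\phi(g\Gamma)=g_0$ is the conjugacy. Your version realizes $X$ as $\R^r/L$ with $L=G_0\cap\Gamma$ a full lattice in the simply connected $G_0\cong\R^r$; this is equivalent (your $\Phi$ descends to the paper's $\phi$ under the quotient by $L$), and the point where abelianness of $G_0$ is essential is literally the same in both treatments, as you correctly flag. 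What your route buys is a self-contained verification of unipotence, via $\exp\bigl(t(A-I)X\bigr)=[a,\exp tX]\in G_{j+1}$ and the connectedness of $G_{j+1}$ from \cref{Parry-theorems}, a step the paper delegates entirely to the citation of Frantzikinakis--Kra; the paper's quotient route buys a conjugacy written without choosing logarithmic coordinates. Both constructions share the feature the paper emphasizes after the proposition: the conjugacy is independent of the rotating element $a$, so it applies simultaneously to the $d$ commuting rotations of a $\Z^d$-Weyl system. Two minor points to add in a polished write-up: passing to the simply connected model preserves abelianness of $G_0$ (the new $G_0$ covers a group locally isomorphic to the old one, and abelianness is a Lie-algebra property); and you only exhibit $c_a(L)\subseteq L$, so note that the reverse inclusion follows by the same argument applied to $a^{-1}$ (or from $\det A=1$, which your unipotence computation supplies), giving $A(L)=L$.
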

By careful examination of the argument in \cite{Frantzikinakis_Kra_averages_product_integrals:2005} one notices that the homeomorphism $X\to \T^d$ constructed in the proof of \cref{affine-equals-G0-abelian} does not depend on the transformation at all. We sketch the argument here for completeness. The proof of \cref{affine-equals-G0-abelian} relies on finding a homeomorphism $X\to G_0$. For this, as $X$ is connected, it can be identified with $G_0\Gamma/\Gamma$, and we may further assume that $G_0\cap \Gamma=\{e_G\}$ since it is possible to replace $G$ by $G/\Gamma\cap G_0$ and $\Gamma$ by $\Gamma/\Gamma\cap G_0$ (because of $\Gamma\cap G_0$ being a normal group in $G$). This reduction makes $G_0$ a connected compact abelian group (and thus, isomorphic to a finite dimensional torus). In addition, for each $g\in G$ there is a unique representation $g=g_0\gamma$ with $g_0\in G_0$ and $\gamma \in \Gamma $, and thus the projection $\phi:X\to G_0$, $\phi(g\Gamma)=g_0$ is a well defined homeomorphism. Noticing that $\phi$ transforms rotations into affine unipotent transformations, we can apply the previous theorem to each transformation of a $\Z^d$-nilsystem.

We will observe in subsequent sections that any minimal $s$-step $\mathbb{Z}^d$-nilsystem can be regarded as a union of connected $s$-step $\mathbb{Z}^d$-nilsystems. Consequently, $\mathbb{Z}^d$-Weyl systems are finite unions of $\mathbb{Z}^d$-affine nilsystems.

\section{Bohr Recurrence for locally compact abelian groups}\label{Sec3}
This section focuses on the concept of recurrence for systems and families of systems, together with the notion of Bohr recurrence, which is established for locally compact abelian groups using Pontryagin duality. We extend many known properties for $G=\Z$. Although these properties might be considered classical, we were unable to find a proper reference in the literature, so we include them for the sake of completeness. Finally, we state Katznelson's question in this context and generalize the fact that proximal extensions and inverse limits lift up recurrence. 

\subsection{Bohr Recurrence}\label{2.1}
For a system $(X,H)$ and sets $U,V\subseteq X$ we denote
$N_H(V,U)=\{h\in H: V\cap h^{-1}U\neq \emptyset\}.$
In the case $U=V$ we just write $N_H(U)=N_H(U,U)$, and in the case $V=\{x\}$ we write
$N_H(x,U)=\{h\in H :  hx\in U\}.$

\begin{defn} \label{def:recurrence_set}
A set $R\subseteq H$ is a \textit{set of recurrence for a system $(X,H)$} if for any nonempty open set $U\subseteq X$, $R\cap N_H(U)\neq \emptyset$. Additionally, if $\sF$ is a family of $H$-systems, a set $R\subseteq H$ is a \textit{set of recurrence for the family $\sF$} if for any minimal system $(X,H)$ in the family $\sF$, $R$ is a set of recurrence for $(X,H).$
\end{defn}
When we say that $R\subseteq H$ is a \textit{$H$-set of recurrence} without specifying any particular family, we mean that $R$ is a set or recurrence for all minimal $H$-systems.

\begin{obs}
    If $R$ is a set of recurrence for a system $(X,H)$, then there exist $x\in X$ and a sequence $(h_n)_{n\in\N}\subseteq R$ such that $h_nx\to x$ as $n$ goes to $\infty$. Indeed, for every $n\in\N$, the set $A_n=\{x\in X:\inf_{h\in R} d(hx,x)<1/n\}$ is open and dense, and therefore any point in the dense $G_{\delta}$ set $\cap_{n\in \N} A_n$ satisfies the aforementioned property. 
\end{obs}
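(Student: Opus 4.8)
The plan is to run a Baire category argument on the sets $A_n=\{x\in X:\inf_{h\in R} d(hx,x)<1/n\}$ appearing in the statement. Since $X$ is a compact metric space it is complete, so if each $A_n$ is open and dense, then the intersection $\bigcap_{n\in\N}A_n$ is a dense $G_\delta$, hence nonempty; any point $x$ of this intersection admits, for each $n$, some $h_n\in R$ with $d(h_nx,x)<1/n$, and then $h_nx\to x$ as desired. Thus the whole content is to verify openness and density of $A_n$.

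Openness is immediate: writing $A_n=\bigcup_{h\in R}\{x\in X: d(hx,x)<1/n\}$, each set in the union is open because the map $x\mapsto d(hx,x)$ is continuous (the action of $h$ is by a homeomorphism and $d$ is continuous), and an arbitrary union of open sets is open.

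The density step is where one has to be a little careful, and this is the main (mild) obstacle: applying the recurrence hypothesis directly to a given open set $U$ only produces $x\in U$ with $hx\in U$, which does not bound $d(hx,x)$ when $U$ is large. To fix this I would first shrink $U$. Given a nonempty open $U$, pick $x_0\in U$ and $\delta>0$ with $2\delta<1/n$ and such that the ball $B=B(x_0,\delta)$ satisfies $B\subseteq U$. Since $R$ is a set of recurrence for $(X,H)$, we have $R\cap N_H(B)\neq\emptyset$, so there exist $h\in R$ and $x\in B$ with $hx\in B$. The triangle inequality then gives $d(hx,x)\le d(hx,x_0)+d(x_0,x)<2\delta<1/n$, so $x\in A_n\cap U$. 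Hence $A_n$ is dense.

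With both properties established, Baire's theorem finishes the proof exactly as outlined in the first paragraph. The only point requiring attention is the passage from recurrence on a small ball (rather than on $U$ itself) to the quantitative estimate $d(hx,x)<1/n$; everything else is routine.
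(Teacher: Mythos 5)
Your proof is correct and is essentially the paper's own argument: the remark's proof is exactly the Baire category sketch on the sets $A_n$, and you have simply filled in the details (openness as a union over $h\in R$, and density via recurrence applied to a small ball $B\subseteq U$ plus the triangle inequality). Nothing differs in substance, and your treatment of the density step is precisely what the paper leaves implicit.
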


Now, we introduce the generalities for studying sets of recurrence in the family of rotations. We begin by discussing duality in locally compact abelian groups in order to define Bohr recurrence. For the purposes of this discussion, we will denote the unit circle by $\mathbb{S}^1=\{z\in \C  :  |z|=1\}$.

Given a locally compact abelian group $H$, a character of $H$ is a continuous homomorphism $\chi\colon H\to \mathbb{S}^1$, and the set $\widehat{H}$ of characters of $H$ is an abelian group under pointwise multiplication. For a compact subset $F\subseteq H$ and $\epsilon>0$, let
$P(F,\epsilon)=\{\chi \in \widehat{H}  :  |\chi(h)-1|<\epsilon, \forall h\in F\}$. The sets $P(F,\epsilon)$, where $F$ and $\epsilon$ range over all compact sets and positive numbers respectively, is an open basis at the identity $e_{\widehat{H}}$, and $\widehat{H}$ is a locally compact abelian topological group (\cite[Theorem 23.15]{Hewitt_Ross_harmonic_analysis_I:1979}). 

Let $H$ be a locally compact abelian group and consider $\chi_1,\ldots,\chi_k\in \widehat{H}$. The Bohr neighborhood of $e_H$ or just $0$ in $H$ determined by $\{\chi_1,\ldots,\chi_k\}$ and radius $\epsilon>0$  is given by
$$\text{Bohr}(\chi_1,\ldots,\chi_k;\epsilon)=\{h\in H: |\chi_i(h)-1|<\epsilon,~~\forall i\in \{1,\ldots,k\}\}.$$

\begin{obs}\label{Dual-obs}
If $H$ is compact, then as $\widehat{\widehat{H}}$ can be identified with $H$  (see  \cite[Theorem 24.3]{Hewitt_Ross_harmonic_analysis_I:1979}), the topology generated by the basis of $e_H$ given by the sets $Bohr(\chi_1,\ldots,\chi_k;\epsilon)$, for $\chi_1,\ldots,\chi_k\in \widehat{H}$ and $\epsilon>0$, coincides with the topology in $H$.
\end{obs}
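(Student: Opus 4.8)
The plan is to identify the \emph{Bohr topology} on $H$ --- the topology for which the sets $\text{Bohr}(\chi_1,\ldots,\chi_k;\epsilon)$ form a neighborhood basis at $e_H$ --- with the topology that $H$ inherits from its bidual $\widehat{\widehat{H}}$ via the canonical evaluation map, and then to invoke Pontryagin duality in its topological form to conclude that this inherited topology agrees with the original one. Recall that the evaluation map $\iota\colon H\to\widehat{\widehat{H}}$ sends $h\in H$ to the character $\iota(h)\colon\chi\mapsto\chi(h)$ of $\widehat{H}$.

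First I would record that, since $H$ is compact, its dual $\widehat{H}$ is discrete (a standard consequence of duality, see \cite{Hewitt_Ross_harmonic_analysis_I:1979}). Hence the compact subsets of $\widehat{H}$ are precisely its finite subsets, and so a neighborhood basis at the identity of the bidual $\widehat{\widehat{H}}$ is furnished by the sets $P(\{\chi_1,\ldots,\chi_k\};\epsilon)=\{\psi\in\widehat{\widehat{H}}:|\psi(\chi_i)-1|<\epsilon,\ \forall i\in\{1,\ldots,k\}\}$, where $\chi_1,\ldots,\chi_k\in\widehat{H}$ and $\epsilon>0$. Pulling these back along $\iota$ yields exactly
$$\iota^{-1}\bigl(P(\{\chi_1,\ldots,\chi_k\};\epsilon)\bigr)=\{h\in H:|\chi_i(h)-1|<\epsilon,\ \forall i\in\{1,\ldots,k\}\}=\text{Bohr}(\chi_1,\ldots,\chi_k;\epsilon).$$
Thus the Bohr topology on $H$ is precisely the initial topology induced by $\iota$ from the dual-group topology of $\widehat{\widehat{H}}$.

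It then remains to appeal to the Pontryagin duality theorem (\cite[Theorem 24.3]{Hewitt_Ross_harmonic_analysis_I:1979}), which asserts that $\iota$ is not merely a group isomorphism but a homeomorphism onto $\widehat{\widehat{H}}$. Since a homeomorphism identifies the topology pulled back from the target with the topology of the source, the Bohr topology coincides with the given topology on $H$; as $H$ is a topological group, coincidence of the neighborhood bases at $e_H$ then propagates to coincidence of the full topologies.

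I do not anticipate a genuine obstacle here, as the argument is essentially a bookkeeping exercise in unpacking the definition of the bidual topology. The one point demanding care --- and the only place where compactness of $H$ is actually used --- is the reduction from compact to finite subsets of $\widehat{H}$: it is precisely discreteness of $\widehat{H}$ that forces the compact sets $F$ appearing in the basis $P(F,\epsilon)$ of the bidual to be finite, which is what makes them match the finitely-many-character Bohr sets exactly. Everything else is a direct consequence of quoting Pontryagin duality in its topological, rather than merely algebraic, form.
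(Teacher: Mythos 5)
Your proposal is correct and follows exactly the route the paper intends: the remark is justified in the paper solely by citing Pontryagin duality \cite[Theorem 24.3]{Hewitt_Ross_harmonic_analysis_I:1979}, and your argument is the natural unpacking of that citation (compactness of $H$ gives discreteness of $\widehat{H}$, hence the compact sets $F$ in the basis $P(F,\epsilon)$ of $\widehat{\widehat{H}}$ are finite, and pulling back along the evaluation homeomorphism yields precisely the sets $\mathrm{Bohr}(\chi_1,\ldots,\chi_k;\epsilon)$). In particular, you correctly isolate the one point where compactness is used, so there is nothing to add.
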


Let $H$ be a locally compact abelian group. We say that $V\subseteq H$ is a $H$-Bohr$_0$ set if $V$ contains a Bohr neighborhood of $0$. Additionally, we say that $W\subseteq H$ is a set of $H$-Bohr recurrence (denoted as $W\in H$-Bohr$_0^*$) if for every $V\in \text{Bohr}_0$, $W\cap V\neq \emptyset$. We will usually drop the prefix $H$- from $H$-Bohr$_0$, $H$-Bohr$_0^*$, and $H$-Bohr recurrence if the locally compact abelian group $H$ is clear from the context. The family of $H$-Bohr$_0$ sets is a filter, which means that it is upward closed and closed under intersections. Additionally, the family of $H$-Bohr$_0^*$ sets is \textit{partition regular}, this is, for every $A\in H\text{-Bohr}_0^*$, if we partition $A$ into $N$ sets $(A_n)_{n=1}^N$, then there is an $n\in \{1,\ldots,N\}$ such that $A_n\in H\text{-Bohr}_0^*$. This is well-known for the case $G=\Z$ (see for example \cite{Glasscock_Koutsogiannis_Richter_OnKatznelson_skew:2022}) but the proof is identical for the general case.

A classic example of a $H$-Bohr$_0$ set which is not trivial is the following.

\begin{prop}\label{RotationsAreBohr}
Let $(X,H)$ be a rotation and consider $x\in X$ and $U\subseteq X$ a neighborhood of $x$. Then, $N(x,U)$ is a Bohr$_0$ set.
\end{prop}
\begin{proof}
Without loss of generality we may assume that $x=e_X$, given that every neighborhood of $x$ is a translation of a neighborhood of $e_X$. Let $\varphi\colon H\to X$ be the homomorphism associated to the action of $H$ on $X$. For an open neighborhood $U$ of $e_X$ we have to prove that
$N(e_X,U)=\{h\in H : \varphi(h)\in U\}=\varphi^{-1}(U)$
is a $H$-Bohr$_0$ set. By \cref{Dual-obs},  there exist $\chi_1,\ldots,\chi_k\in \widehat{X}$ and $\epsilon>0$ are such that $Bohr(\chi_1,\ldots,\chi_k;\epsilon)\subseteq U$ from which we deduce $Bohr(\chi_1\circ \varphi,\ldots ,\chi_k\circ \varphi; \epsilon)\subseteq \varphi^{-1}(U)$. This means that $N(e_X,U)$ is a $H$-Bohr$_0$ set and we conclude.
 \end{proof}
As any orbit in an equicontinuous system is, modulo conjugacy, a minimal rotation, we can extend \cref{RotationsAreBohr} to any equicontinuous system. In particular, we have that sets of Bohr recurrence are precisely the family of sets of recurrence for equicontinuous systems and rotations.

\subsection{Katznelson's question}\label{Ch3}
In this section, we state Katznelson's question regarding general group actions.\\

\textit{Katznelson's question:} Given an action $H$, is every set of Bohr recurrence a set of (topological) $H$-recurrence?\\

Notice that in the context of nonabelian groups, one could still formulate Katznelson's problem replacing Bohr recurrence by equicontinuous recurrence. That is, say that $S\subseteq H$ is a set of equicontinuous recurrence if for any minimal equicontinous system $(X,H)$ and nonempty open set $U\subseteq X$, there exists $g\in S$ such that $U\cap g^{-1}U\neq \emptyset$. For abelian groups, as we have seen, equicontinuous recurrence is equivalent to Bohr recurrence. However, one key advantage of equicontinuous recurrence is that it does not rely on the theory of characters, which is not readily available for nonabelian groups. The Katznelson's problem in this context would be asking if sets of equicontinuous recurrence are necessarily sets of recurrence. It is easy to construct groups for which this formulation of Kaztnelson problem is false, as the following example shows for instance.
\begin{ej}
  Consider a nonrecurrent map $T$ on the unit circle $\mathbb{S}^1$ (e.g., $T(x)=x^2$) and let $S$ be an irrational rotation. We have that $(\mathbb{S}^1,\langle T,S\rangle)$ is a minimal system and the set $\N \times \{0\}$ (here we identify this set with $\{T^n:n\in\N\}$, since all powers of $T$ are different)  is a set of equicontinous recurrence but not of recurrence for the system $(\mathbb{S}^1,\langle T,S\rangle)$.
\end{ej}
 For that reason we restrict ourselves to abelian groups and focus our attention on $\Z^d$, $d\geq 2$ later on. As we pointed out in the introduction, a strategy to give a positive answer to Katznelson's question for $\Z$-actions is to prove that Bohr recurrence can be lifted up through certain types of extensions for particular classes of systems. This idea also holds for $H$-actions in our context, and the following propositions show that recurrence can be lifted up through inverse limits and proximal extensions. The proof for inverse limits is elemental and the proof for proximal extensions is a straightforward adaptation of the original argument in \cite[Proposition 3.8]{Host_Kra_Maass_variations_top_recurrence:2016}, so we will omit them.
 
\begin{prop}\label{inverse-limits-preserve-recurrence}
Let $\{(X_n,H)\}_{n\in \N}$ be a collection of minimal topological dynamical systems with factor maps $\pi_{n+1}\colon (X_{n+1},H)\to (X_{n},H)$ for $n\in \N$. Suppose that $R$ is a set of recurrence for $(X_n,H)$ for all $n\in \N$. Then, it is also a set of recurrence
for the inverse limit of these systems with respect to the factors.  
\end{prop}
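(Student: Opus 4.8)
The plan is to prove that recurrence lifts through inverse limits by a direct diagonal/approximation argument, exploiting that the inverse limit topology is generated by cylinder sets pulled back from the individual coordinate systems. Let $(X_\infty, H) = \varprojlim (X_n, H)$ denote the inverse limit, with canonical projections $p_n \colon X_\infty \to X_n$ satisfying $\pi_{n+1} \circ p_{n+1} = p_n$. Since the topology on $X_\infty$ is the subspace topology inherited from the product, a basic open set has the form $U = X_\infty \cap \prod_{n} U_n$ where $U_n = X_n$ for all but finitely many $n$. Because the bonding maps $\pi_{n+1}$ are factor maps (in particular continuous and equivariant) and $p_n = \pi_{n+1}\circ p_{n+1}$, the constraint from a large coordinate subsumes those from smaller ones; concretely, it suffices to show that for every $N \in \N$ and every nonempty open $V \subseteq X_N$, the set $p_N^{-1}(V)$ witnesses recurrence, since every nonempty basic open subset of $X_\infty$ contains a set of this form (take $N$ to be the largest index where $U_n \neq X_n$, and use that $p_N$ is onto so $V := U_N \cap \bigcap (\text{pulled-back constraints})$ is nonempty open in $X_N$).

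First I would fix a nonempty open $U \subseteq X_\infty$ and reduce, as above, to a single nonempty open $V \subseteq X_N$ with $p_N^{-1}(V) \subseteq U$. Then I would invoke the hypothesis that $R$ is a set of recurrence for $(X_N, H)$: this yields some $h \in R$ with $V \cap h^{-1}V \neq \emptyset$, i.e. $N_H(V) \cap R \neq \emptyset$. The remaining task is to transfer this recurrence upward to $X_\infty$, that is, to produce a point of $U$ whose $h$-image also lies in $U$. The cleanest route is to observe that since $p_N$ is a factor map, $N_H(p_N^{-1}(V)) = N_H(V)$: indeed $p_N(hx) = h\,p_N(x)$ for all $x$, so $x \in p_N^{-1}(V) \cap h^{-1}p_N^{-1}(V)$ exactly when $p_N(x) \in V \cap h^{-1}V$, and surjectivity of $p_N$ guarantees such an $x$ exists whenever $V \cap h^{-1}V \neq \emptyset$. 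Hence the same $h \in R$ satisfies $U \cap h^{-1}U \supseteq p_N^{-1}(V) \cap h^{-1}p_N^{-1}(V) \neq \emptyset$, so $R \cap N_H(U) \neq \emptyset$, which is exactly the definition of $R$ being a set of recurrence for $(X_\infty, H)$.

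I expect the only genuine subtlety, rather than an obstacle, to be the bookkeeping in the reduction step: verifying that an arbitrary basic open set of the inverse limit really does contain a full preimage $p_N^{-1}(V)$ of a nonempty open set from a single coordinate. This uses minimality of each $X_n$ only insofar as it guarantees the projections $p_n$ are surjective with dense image (surjectivity already follows from compactness and the factor structure), and it relies on the commutativity relations $\pi_{n+1}\circ p_{n+1} = p_n$ to collapse finitely many coordinate constraints into one at the top index $N$. Everything else is formal: continuity and equivariance of factor maps, and the elementary identity $N_H(p_N^{-1}(V)) = N_H(V)$. Since the statement itself is described as \emph{elemental} in the text, the proof is essentially this pullback argument, and I would present it in a few lines once the cylinder-set reduction is made explicit.
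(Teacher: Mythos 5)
Your proof is correct: the reduction of an arbitrary nonempty open subset of the inverse limit to a cylinder set $p_N^{-1}(V)$ with $V\subseteq X_N$ nonempty open, followed by the identity $p_N^{-1}(V)\cap h^{-1}p_N^{-1}(V)=p_N^{-1}(V\cap h^{-1}V)$ (equivariance) and surjectivity of $p_N$ (compactness plus surjective bonding maps), is exactly the elementary pullback argument the paper has in mind, which it omits after describing the proof as ``elemental.'' Your side remark is also accurate that minimality of the $X_n$ plays no essential role beyond guaranteeing surjectivity, which already follows from compactness.
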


\begin{prop}\label{Proximal-Theorem}
Let $\pi\colon (X,H)\to (Y,H)$ be a proximal extension between minimal systems and let $R$ be a set of recurrence for $(Y,H)$. Then, $R$ is a set of recurrence for $(X,H)$.
\end{prop}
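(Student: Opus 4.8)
The plan is to verify directly that for every nonempty open set $U\subseteq X$ one has $R\cap N_H(U)\neq\emptyset$. Fix such a $U$, choose a point $x_0\in U$, and set $y_0=\pi(x_0)$. The first step is to transport the recurrence of $R$ from $Y$ down to the fiber over $y_0$: applying the hypothesis that $R$ is a set of recurrence for $(Y,H)$ to a sequence of shrinking open neighborhoods of $y_0$, I obtain elements $h_n\in R$ and points $y_n\to y_0$ with $h_n y_n\to y_0$. Choosing arbitrary preimages $x_n\in\pi^{-1}(y_n)$ and passing to a subsequence (using compactness of $X$), I may assume $x_n\to a$ and $h_n x_n\to b$; continuity of $\pi$ then forces $\pi(a)=\pi(b)=y_0$, so $a$ and $b$ lie in the same fiber.

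Here the hypothesis that $\pi$ is a \emph{proximal} extension enters: the pair $(a,b)$ is proximal, so its orbit closure in $X\times X$ under the diagonal action meets the diagonal $\Delta_X=\{(x,x):x\in X\}$, and since $\Delta_X$ is a minimal subsystem (isomorphic to $X$) it is in fact entirely contained in $\overline{\mathcal{O}(a,b)}$. In particular $(x_0,x_0)\in\overline{\mathcal{O}(a,b)}$, which produces a sequence $g_k\in H$ with $g_k a\to x_0$ and $g_k b\to x_0$.

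The final step combines these two limits through a diagonal argument, and it is precisely here that commutativity of $H$ is used. Given $\epsilon>0$ with the ball of radius $\epsilon$ around $x_0$ contained in $U$, I first fix $k$ so that $g_k a$ and $g_k b$ are within $\epsilon/2$ of $x_0$, and then, using uniform continuity of the now-fixed homeomorphism $g_k$, choose $n$ so large that both $g_k x_n$ and $g_k(h_n x_n)$ lie within $\epsilon$ of $x_0$. Setting $x=g_k x_n\in U$ and using $h_n g_k=g_k h_n$, I get $h_n x=g_k(h_n x_n)\in U$ as well, so $x\in U\cap h_n^{-1}U$ with $h_n\in R$, as desired.

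I expect the main subtlety to be the lifting in the second step rather than the bookkeeping in the third. The obstruction is that $\pi$ need not be open, so the preimages $x_n$ of $y_n\to y_0$ cannot be chosen to converge to $x_0$, and the uncontrolled limit points $a,b$ may sit anywhere in the fiber $\pi^{-1}(y_0)$. Proximality of the extension together with commutativity of $H$ is exactly what lets me reel $a$ and $b$ back toward $x_0$ simultaneously while preserving the recurrence witness $h_n\in R$. This is also the only place where the passage from $\Z$ to a general locally compact abelian $H$ requires attention, and since the argument uses only that elements of $H$ commute and that the orbit closure of a proximal pair contains the diagonal, the adaptation of \cite[Proposition 3.8]{Host_Kra_Maass_variations_top_recurrence:2016} is indeed routine.
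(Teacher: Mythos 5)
Your proof is correct. The paper itself omits the argument, pointing to \cite[Proposition 3.8]{Host_Kra_Maass_variations_top_recurrence:2016} as a straightforward adaptation, and your proof---lifting the recurrence witnesses to a pair $(a,b)$ in a common fiber over $y_0$, using proximality plus minimality of the diagonal in $X\times X$ to bring $(a,b)$ back near $(x_0,x_0)$, and then commuting $h_n$ past the fixed $g_k$---is exactly that adaptation to an abelian acting group.
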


Unlike proximal extensions and inverse limits, it is not easy to generalize the fact that sets of Bohr recurrence are set of recurrence for $s$-step nilsystems. In the next sections, we focus on doing this for the family of $\Z^d$-Weyl systems by developing certain techniques over sets of $\Z^d$-Bohr recurrence.

The following results show that Bohr recurrence is preserved under homomorphisms of locally compact groups. We present these in generality, but they will be mostly used in the case of $\Z^d$.  Related results can be found in \cite{Le_Le_Bohr_sumsets_I:2021,Griesmer_special_cases_Katznelson:2023}. The following can be found in \cite[Proposition 2.10]{Donoso_Le_Moreira_Sun_additive_mult_corr:2023}, with the Bohr case being a straightforward adaptation of that proof.

\begin{prop} \label{prop:morphism_recurrence} Let $\phi\colon H\to H'$ be a group homomorphism between the locally compact abelian groups $H$ and $H'$, and let $R\subseteq H$ be a set of $H$-recurrence ($H$-Bohr recurrence). Then 
$\phi(R)$  is a set of $H'$-recurrence (resp $H'$-Bohr recurrence). 
\end{prop}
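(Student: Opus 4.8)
The plan is to show directly that $\phi(R)$ meets every Bohr$_0$ set of $H'$, which is the defining property of a set of $H'$-Bohr recurrence. So let $V' \subseteq H'$ be an arbitrary Bohr$_0$ set, meaning $V'$ contains a Bohr neighborhood $\text{Bohr}(\chi_1', \ldots, \chi_k'; \epsilon)$ for some characters $\chi_1', \ldots, \chi_k' \in \widehat{H'}$ and some $\epsilon > 0$. My goal is to produce an element $h \in R$ with $\phi(h) \in V'$.

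The key observation is that $\phi$ induces a dual map on characters: for each $\chi_i' \in \widehat{H'}$, the composition $\chi_i' \circ \phi \colon H \to \mathbb{S}^1$ is a continuous homomorphism, hence a character of $H$. I would then consider the Bohr$_0$ set of $H$ given by
$$
V = \text{Bohr}(\chi_1' \circ \phi, \ldots, \chi_k' \circ \phi; \epsilon).
$$
Since $R$ is a set of $H$-Bohr recurrence, $R \cap V \neq \emptyset$, so I may pick $h \in R$ with $|\chi_i'(\phi(h)) - 1| = |(\chi_i' \circ \phi)(h) - 1| < \epsilon$ for all $i \in \{1, \ldots, k\}$. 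This says precisely that $\phi(h) \in \text{Bohr}(\chi_1', \ldots, \chi_k'; \epsilon) \subseteq V'$, and since $h \in R$, we have $\phi(h) \in \phi(R) \cap V'$. As $V'$ was an arbitrary Bohr$_0$ set, this proves $\phi(R) \in H'\text{-Bohr}_0^*$.

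The recurrence case follows the same template but is slightly more structural: given a minimal $H'$-system $(X, H')$ and a nonempty open $U \subseteq X$, I would pull the action back along $\phi$ to view $X$ as an $H$-system via $h \cdot x := \phi(h)(x)$, then use that $R$ is a set of $H$-recurrence for (a minimal subsystem of) this pulled-back system to find $h \in R$ with $U \cap (\phi(h))^{-1} U \neq \emptyset$, giving $\phi(h) \in N_{H'}(U)$. One must be mildly careful that the pulled-back $H$-system need not be minimal, so I would pass to a minimal $H$-subsystem meeting $U$; since we only omit this proof and refer to \cite{Donoso_Le_Moreira_Sun_additive_mult_corr:2023}, I will not belabor this point.

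\emph{Main obstacle.} The genuinely routine part is the Bohr case, where the only subtlety is verifying that $\chi_i' \circ \phi$ is a bona fide \emph{continuous} character of $H$ — this is immediate since $\phi$ is a continuous homomorphism (as a morphism of topological groups) and $\chi_i'$ is continuous, so the composition is continuous. The main point requiring care is the recurrence case: the pullback of a minimal system along $\phi$ need not be minimal, so the reduction to a minimal subsystem must be handled correctly to invoke the hypothesis that $R$ is a set of $H$-recurrence. Given that the excerpt explicitly defers this to the cited reference, I expect no further difficulty.
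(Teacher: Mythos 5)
Your Bohr case is complete and correct, and it is exactly the ``straightforward adaptation'' the paper alludes to: the paper gives no proof of this proposition at all, deferring to \cite[Proposition 2.10]{Donoso_Le_Moreira_Sun_additive_mult_corr:2023}, and your dual-map argument ($\chi'\mapsto\chi'\circ\phi$ pulls Bohr neighborhoods of $H'$ back to Bohr neighborhoods of $H$) is the natural one, mirroring the proof of \cref{RotationsAreBohr}.

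The recurrence case, however, has a genuine gap at the step you wave off as needing to be only ``mildly careful'': you must produce a \emph{minimal} $H$-subsystem of the pulled-back action that \emph{meets} $U$, and the existence of such a subsystem is not automatic --- it is precisely where the abelian hypothesis enters. In an arbitrary compact system the union of the minimal subsystems need not be dense, so ``pass to a minimal $H$-subsystem meeting $U$'' is the crux, not a technicality; indeed, for nonabelian $H'$ the proposition itself can fail, consistent with the paper's example (after Katznelson's question) showing that recurrence does not transfer along homomorphisms into nonabelian groups. The missing argument is short but uses commutativity essentially. Let $Y\subseteq X$ be any minimal $\phi(H)$-subsystem (one exists by compactness). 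For each $h'\in H'$ the translate $h'Y$ is closed and $\phi(H)$-invariant, and it is $\phi(H)$-minimal: since $h'$ commutes with every element of $\phi(H)$, any nonempty closed $\phi(H)$-invariant $Z\subseteq h'Y$ pulls back to the $\phi(H)$-invariant set $h'^{-1}Z\subseteq Y$, which equals $Y$ by minimality, so $Z=h'Y$. Because $(X,H')$ is minimal, the $H'$-orbit of any point of $Y$ is dense, so some translate $h'Y$ meets $U$; replacing $Y$ by that translate, the set $U\cap Y$ is nonempty and open in the minimal $H$-system $(Y,H)$ (acting via $\phi$), and applying the hypothesis on $R$ to this system yields $h\in R$ with $(U\cap Y)\cap\phi(h)^{-1}(U\cap Y)\neq\emptyset$, hence $\phi(h)\in\phi(R)\cap N_{H'}(U)$ as you intended. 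With this paragraph inserted, your proof is complete.
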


Note that if $H$ and $H'$ are locally compact abelian groups with $H\leq H'$, \cref{prop:morphism_recurrence} applied to the injection map tells us that any set of $H$-recurrence ($H$-Bohr recurrence) is a set of $H'$-recurrence (resp $H'$-Bohr recurrence). We state a converse statement for the Bohr case under a finite index condition.

\begin{prop}\label{prop:finite_index}
    Let $H\leq H'$ be locally compact abelian groups and assume that $H$ is closed and of finite index in $H'$. If $R\subseteq H'$ is a set of Bohr-$H'$-recurrence, then $R\cap H$ is a set of $H$-Bohr recurrence.   
\end{prop}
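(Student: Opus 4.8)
The plan is to unwind the definition of $H$-Bohr recurrence and reduce to a single approximation task: for arbitrary $\psi_1,\dots,\psi_k\in\widehat H$ and $\epsilon>0$, find $g\in R\cap H$ with $|\psi_i(g)-1|<\epsilon$ for every $i$. This suffices because every $H$-Bohr$_0$ set contains a neighborhood of the form $\text{Bohr}(\psi_1,\dots,\psi_k;\epsilon)$. The first move is to lift the $\psi_i$ to $H'$: since $H$ is a closed subgroup of the locally compact abelian group $H'$, the restriction map $\widehat{H'}\to\widehat H$ is surjective by Pontryagin duality for closed subgroups (\cite{Hewitt_Ross_harmonic_analysis_I:1979}), so I would pick extensions $\widetilde\psi_i\in\widehat{H'}$ with $\widetilde\psi_i|_H=\psi_i$.

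Next I would use the finite index to detect membership in $H$ spectrally. Let $q\colon H'\to H'/H$ be the quotient onto the finite discrete abelian group $H'/H$ (finite since $H$ is closed of finite index). Its dual separates points, so for each nonzero coset $c$ there is $\eta_c\in\widehat{H'/H}$ with $\eta_c(c)\neq 1$, and the finitely many positive numbers $|\eta_c(c)-1|$ admit a positive minimum $\delta_0$. Pulling back, the characters $\widetilde\eta_c=\eta_c\circ q\in\widehat{H'}$ are trivial on $H$ and enjoy the crucial gap property: any $g\in H'$ with $|\widetilde\eta_c(g)-1|<\delta_0$ for all $c\neq 0$ is forced to have $q(g)=0$, i.e.\ $g\in H$.

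With these in hand I would apply the hypothesis. Setting $\delta=\min(\epsilon,\delta_0)$, the set
\[
V=\text{Bohr}\bigl(\widetilde\psi_1,\dots,\widetilde\psi_k,\ (\widetilde\eta_c)_{c\neq0};\ \delta\bigr)
\]
is an $H'$-Bohr$_0$ set, so $R\cap V\neq\emptyset$ by $H'$-Bohr recurrence; pick $g$ there. The $\widetilde\eta_c$-conditions give $|\widetilde\eta_c(g)-1|<\delta\le\delta_0$, forcing $g\in H$, hence $g\in R\cap H$; the $\widetilde\psi_i$-conditions give $|\psi_i(g)-1|=|\widetilde\psi_i(g)-1|<\delta\le\epsilon$ (using $\widetilde\psi_i|_H=\psi_i$). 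This is exactly the element sought, completing the reduction.

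I expect the main obstacle to be purely the character-extension step, namely surjectivity of $\widehat{H'}\to\widehat H$ for a closed subgroup; this is standard but is the one genuinely non-elementary ingredient in the general locally compact setting (in the motivating case $H\le H'=\Z^d$ it is immediate from linear algebra). Everything else is bookkeeping that converts the finiteness of $H'/H$ into a uniform spectral gap $\delta_0$ that pins the recurrence point into $H$, after which the hypothesis on $R$ applies directly.
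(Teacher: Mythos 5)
Your proof is correct, and after the shared first step (extending $\psi_1,\ldots,\psi_k$ from the closed subgroup $H$ to characters of $H'$ --- the paper invokes Rudin's extension theorem, you invoke Hewitt--Ross; it is the same standard fact), it takes a genuinely different route from the paper's. The paper argues dynamically: it forms the rotation on $(\mathbb{S}^1)^k\times H'/H$ given by the extended characters together with the natural $H'$-action on the finite quotient, passes to a minimal subsystem $Y$, picks an open set $V\subseteq U\times\{e\}$, and uses the fact --- established earlier in the section via \cref{RotationsAreBohr} --- that a set of $H'$-Bohr recurrence is a set of recurrence for minimal rotations; the return of the finite coordinate to $e$ is what forces the recurrence element into $H$. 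You instead stay entirely on the character side: you dualize the finite discrete group $H'/H$, pull its characters back to $H'$, extract a uniform gap $\delta_0>0$ from the finitely many nonzero cosets, and exhibit one explicit $H'$-Bohr$_0$ set that $R$ must meet by the mere definition of Bohr recurrence, the gap pinning any such element into $H$. The two mechanisms are dual to one another --- the finite factor system in the paper plays exactly the role of your pulled-back characters $\widetilde{\eta}_c$ --- but your version is more self-contained, since it never needs the equivalence between Bohr recurrence and recurrence for rotations, only the definition of Bohr$_0$ sets, whereas the paper's version reuses the dynamical machinery it has just built. Both proofs use the closedness of $H$ in the same two places: for the character extension, and to make $H'/H$ Hausdorff, hence finite discrete.
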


\begin{proof}
It suffices to show that for any $\chi_1,\ldots,\chi_k\in \widehat{H}$ and $\epsilon>0$, there exists $g\in R\cap H$ such that $g\in Bohr(\chi_1,\ldots,\chi_k; \epsilon)$. As $H$ is closed, we may extend $\chi_1,\ldots,\chi_k$ to characters in $\widehat{H'}$ (see for instance \cite[Theorem 2.1.4]{Rudin_Fourier_analysis_groups:1962}). Consider $((\mathbb{S}^1)^k,H')$ the rotation by these characters in $(\mathbb{S}^1)^k$, that is, $h$ maps $(x_1,\ldots,x_k)$ to $(\chi_1(h)x_1,\ldots, \chi_k(h)x_k)$. Consider the finite system $(H'/H,H')$, where the action is the natural one, and take $Y$ to be a minimal subsystem of $((\mathbb{S}^1)^k\times H'/H,H')$. Let $V\subseteq Y$ an open set with $V\subseteq U \times \{e\}$, where $U$ is an $\epsilon/2$-neighborhood of $1$ and $e$ is the identity in $H'/H$. As $R$ is a set of Bohr-$H'$ recurrence, there exists $g\in R$ such that $g^{-1}V\cap V \neq \emptyset$. This implies that $g\in R\cap H$ and $g\in Bohr(\chi_1,\ldots,\chi_k; \epsilon)$, as desired.

\end{proof}

\section{Bohr Recurrence for $\Z^d$-actions}\label{Sec4}
In this section we establish several properties of sets of $\Z^d$-Bohr recurrence, introducing the notion of Bohr correlations.

\subsection{Basic properties of sets of $\Z^d$-Bohr recurrence}
For $d\in \N$, we denote by $[d]=\{1,\ldots,d\}$ and for $x\in \R$ we denote the maximum integer $z$ such that $z\leq x$ by $\lfloor x\rfloor$, the decimal part of $x$ by $\{x\}=x-\lfloor x\rfloor\in [0,1)$, and the torus norm of $x$ by $\norm{x}_{\T}=\min\{\{x\}, 1- \{x\}  \}$, which is the distance of $x$ to the nearest
integer. We generalize this notation for $\boldsymbol{x}\in \R^r$ as follows:
$$ \lfloor \boldsymbol{x}\rfloor=(\lfloor x_i\rfloor)_{i\in [r]}, ~~ \{\boldsymbol{x}\}=(x_i-\lfloor x_i\rfloor)_{i\in [r]} \in [0,1)^r,~~\text{and} ~~\norm{\boldsymbol{x}}_{\T^r}=\sum_{i=1}^r \norm{x_i}_{\T}.$$
When it is clear, we will simply use $\norm{\boldsymbol{x}}$ instead of $\norm{\boldsymbol{x}}_{\T^r}$. With this notation we observe that $V\subseteq \Z^r$ is a Bohr$_0$ set if there exist $\epsilon>0$, $d\in \N$, and $\boldsymbol{\alpha}_1,\ldots,\boldsymbol{\alpha}_d\in \T^r$ such that
$$\{\boldsymbol{n}=(n_1,\ldots,n_r)\in \Z^r : \norm{n_i\boldsymbol{\alpha}_i}<\epsilon,\forall i\in[d]\} \subseteq V.$$

\begin{defn}[Ramsey property]
A property of subsets of $\Z^d$ is Ramsey if for any set $R\subseteq \Z^d$ that has this property and any partition $R=A\cup B$, at least one of $A$ or $B$ has this property.
\end{defn}
In other words, a family possesses the Ramsey property if and only if it is partition regular. As we pointed out in \cref{2.1} (right after \cref{Dual-obs}), the family of sets of $H$-recurrence is partition regular for each locally compact abelian group $H$. Consequently, we have the following proposition regarding the family of sets of $\Z^d$-Bohr recurrence.

\begin{prop}
The sets of $\Z^d$-Bohr recurrence have the Ramsey property.
\end{prop}
Analogously to the one dimensional case where sets of recurrence are usually considered in $\N$ or $\Z\setminus\{0\}$ to avoid $0$, we introduce the following definition. 
\begin{defn}[Essentiality]\label{Essential-set-of-Bohr-recurrence}
Let $R\subseteq \Z^d$ be a set of $\Z^d$-Bohr recurrence. We say that $R$ is \textit{essential} if $\forall \boldsymbol{n}\in R$, $\forall j\in [d]$, $n_j\neq 0$.
\end{defn}
From now one we make the abuse of notation denoting $\pi:\Z^d\to \Z^d$ the permutation of coordinates induced by a permutation $\pi:[d]\to [d]$.

The name ``essential'' is just to emphasize that this is the only relevant study case. In fact, the next proposition shows that every relevant set of $\Z^d$-Bohr recurrence can be reduced to an essential set of $\Z^d$-Bohr recurrence.
\begin{prop}\label{essential-Bohr-set}
    Let $R\subseteq \Z^d\setminus\{0\}$ be a set of $\Z^d$-Bohr recurrence. Then, there exist $d'\leq d$, a permutation $\pi\colon [d] \to [d]$, and an essential set of $\Z^{d'}$-Bohr recurrence $R'\subseteq \Z^{d'}$ such that $R'\times \{0\}^{d-d'}\subseteq \pi(R)$.
\end{prop}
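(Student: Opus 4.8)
The plan is to partition $R$ according to which coordinates vanish, extract a single ``support class'' that is still a set of Bohr recurrence by partition regularity, and then reorder and project using \cref{prop:morphism_recurrence}. For $\boldsymbol{n}=(n_1,\ldots,n_d)\in R$ set $\mathrm{supp}(\boldsymbol{n})=\{j\in[d]:n_j\neq 0\}$; since $R\subseteq \Z^d\setminus\{0\}$, each support is nonempty. Partitioning $R$ over the finitely many nonempty subsets $S\subseteq[d]$,
\[
R=\bigsqcup_{\emptyset\neq S\subseteq[d]} R_S,\qquad R_S=\{\boldsymbol{n}\in R:\mathrm{supp}(\boldsymbol{n})=S\},
\]
and iterating the Ramsey property of $\Z^d$-Bohr recurrence sets established above (extended from two-set to finite partitions in the usual way), I obtain some $S_0$ with $|S_0|=d'$ such that $R_{S_0}$ is again a set of $\Z^d$-Bohr recurrence. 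By construction every $\boldsymbol{n}\in R_{S_0}$ has $n_j\neq 0$ exactly for $j\in S_0$ and $n_j=0$ otherwise.

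Next I choose a permutation $\pi\colon[d]\to[d]$ carrying $S_0$ onto $\{1,\ldots,d'\}$. The induced coordinate permutation is a group automorphism of $\Z^d$, so \cref{prop:morphism_recurrence} gives that $\pi(R_{S_0})$ is a set of $\Z^d$-Bohr recurrence; moreover each of its elements has its first $d'$ coordinates nonzero and its last $d-d'$ coordinates equal to $0$, so $\pi(R_{S_0})=R'\times\{0\}^{d-d'}$ for a well-defined $R'\subseteq(\Z\setminus\{0\})^{d'}$. Applying \cref{prop:morphism_recurrence} once more to the projection homomorphism $p\colon\Z^d\to\Z^{d'}$ onto the first $d'$ coordinates, and noting $p(\pi(R_{S_0}))=R'$, shows that $R'$ is a set of $\Z^{d'}$-Bohr recurrence. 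Since all its coordinates are nonzero, $R'$ is essential, and $R'\times\{0\}^{d-d'}=\pi(R_{S_0})\subseteq\pi(R)$, as required.

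The only real content is the first step: partition regularity is what lets us pass to a single support pattern while preserving Bohr recurrence. Everything afterward is bookkeeping resting entirely on the functoriality of Bohr recurrence under homomorphisms (\cref{prop:morphism_recurrence}), applied once to the coordinate permutation and once to the coordinate projection. The one point to be careful about is that projecting away the vanishing coordinates loses no information precisely because those coordinates are identically zero on $R_{S_0}$, which is exactly what the support decomposition guarantees; hence the projection is a bijection between $\pi(R_{S_0})$ and $R'$ and the equality $\pi(R_{S_0})=R'\times\{0\}^{d-d'}$ holds on the nose.
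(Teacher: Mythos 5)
Your proposal is correct and follows essentially the same route as the paper: partition $R$ by the support pattern of its elements, use partition regularity to extract one support class that is still a set of $\Z^d$-Bohr recurrence, then permute and project, with \cref{prop:morphism_recurrence} guaranteeing that both homomorphisms preserve Bohr recurrence. The only difference is presentational: you invoke \cref{prop:morphism_recurrence} explicitly at the permutation and projection steps, where the paper leaves these applications implicit.
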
 
\begin{proof}
For $J\subseteq [d]$ denote 
$R_J= \{  (n_j)_{j\in [d]} \in R  :  n_j\neq 0 \iff j\in J\}.$ Notice that, as $0\notin R$, we have that 
$$ R= \bigcup_{\substack{J\subseteq [d]\\
J\neq \emptyset}} R_J.$$
By the Ramsey property, there exists $J\subseteq [d]$ nonempty such that $ R_J$ is a set of Bohr $\Z^{d}$-recurrence. Now, consider a permutation $\pi\colon \Z^d \to \Z^d$ taking the coordinates $J$ to $\{1,\ldots, d'\}$ maintaining their order, where $d'=|J|$. If we set $R'=p(R_J)$, where $p\colon\Z^d \to \Z^{J}$ is the canonical projection, then $R'\times \{0\}^{d-d'}\subseteq \pi(R)$ and $R'$ is an essential set of $\Z^{d'}$-Bohr recurrence.
\end{proof}
In particular, we can always reduce to the case in which $R$ is essential when studying recurrence in the family of nilsystems.
\begin{cor}\label{essential-sufficiency}
    Every set of $\Z^d$-Bohr recurrence is a set of recurrence for the family of $\Z^d$-nilsystems if and only if every essential set of $\Z^d$-Bohr recurrence is a set of recurrence for the family of $\Z^d$-nilsystems.
\end{cor}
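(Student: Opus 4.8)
The forward implication will be immediate and I will dispatch it first: every essential set of $\Z^d$-Bohr recurrence is \emph{a fortiori} a set of $\Z^d$-Bohr recurrence, so if the general statement holds it holds in particular for essential sets. The entire content lies in the converse, and the plan is to read it (as the phrasing ``we can always reduce to the case in which $R$ is essential'' suggests) with the dimension quantified: I will assume that for every $d'\le d$ every essential set of $\Z^{d'}$-Bohr recurrence is a set of recurrence for the family of minimal $\Z^{d'}$-nilsystems, and deduce the same for arbitrary sets of $\Z^d$-Bohr recurrence.

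So let $R$ be a set of $\Z^d$-Bohr recurrence. If $0\in R$ the conclusion is trivial, since $0\in N_{\Z^d}(U)$ for every nonempty open $U$; hence I may assume $R\subseteq\Z^d\setminus\{0\}$ and invoke \cref{essential-Bohr-set} to produce $d'\le d$, a coordinate permutation $\pi$, and an essential set $R'\subseteq\Z^{d'}$ of $\Z^{d'}$-Bohr recurrence with $R'\times\{0\}^{d-d'}\subseteq\pi(R)$. By the standing hypothesis, $R'$ is a set of recurrence for every minimal $\Z^{d'}$-nilsystem. The task then splits into three moves: promote the recurrence of $R'$ to recurrence of $R'\times\{0\}^{d-d'}$ for minimal $\Z^d$-nilsystems, pass to the superset $\pi(R)$ by monotonicity, and undo the permutation.

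The central step is the promotion, and this is where I expect the real work. Given a minimal $\Z^d$-nilsystem $(X,T_1,\ldots,T_d)$ and a nonempty open $U\subseteq X$, I will choose $x\in U$ and restrict to the $\Z^{d'}$-subaction generated by $T_1,\ldots,T_{d'}$. This subaction is again a nilsystem on the same nilmanifold, although it need not be minimal; however, since nilsystems are distal, every point is almost periodic, so the orbit closure $Y=\overline{\mathcal{O}(x)}$ under $T_1,\ldots,T_{d'}$ is a \emph{minimal} $\Z^{d'}$-subsystem, and it is itself a sub-nilsystem (the closed orbit of a point in a nilsystem is a nilsystem). Now $U\cap Y$ is a nonempty open subset of $Y$, so recurrence of $R'$ for the minimal $\Z^{d'}$-nilsystem $Y$ furnishes $\mathbf{n}'\in R'$ and $y\in U\cap Y$ with $T_1^{n_1'}\cdots T_{d'}^{n_{d'}'}y\in U\cap Y$. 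Read in the ambient system, $\mathbf{n}=(\mathbf{n}',0,\ldots,0)\in R'\times\{0\}^{d-d'}$ gives $y\in U\cap\mathbf{n}^{-1}U$, so $R'\times\{0\}^{d-d'}$ is a set of recurrence for $(X,T_1,\ldots,T_d)$.

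Since being a set of recurrence is upward closed, the inclusion $R'\times\{0\}^{d-d'}\subseteq\pi(R)$ upgrades this to $\pi(R)$ being a set of recurrence for every minimal $\Z^d$-nilsystem. Undoing $\pi$ is then a routine relabelling of the commuting generators: for a minimal $\Z^d$-nilsystem $(X,T_1,\ldots,T_d)$, the reindexed system $(X,T_{\pi^{-1}(1)},\ldots,T_{\pi^{-1}(d)})$ is again a minimal $\Z^d$-nilsystem, and the action of $\pi(\mathbf{n})$ there coincides with the action of $\mathbf{n}$ on the original, so recurrence of $\pi(R)$ on the reindexed system transfers to recurrence of $R$ on $(X,T_1,\ldots,T_d)$. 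The only genuinely non-formal ingredient is the distality argument, which is precisely what guarantees that dropping to the $\Z^{d'}$-subaction and then to the orbit closure of a point of $U$ keeps us within the class of \emph{minimal} nilsystems to which the essential hypothesis applies; the permutation and monotonicity steps are bookkeeping.
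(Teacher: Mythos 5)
Your proposal is correct and follows essentially the same route as the paper: reduce to an essential $R'\subseteq\Z^{d'}$ via \cref{essential-Bohr-set}, pass to the orbit closure of a point under the relevant $\Z^{d'}$-subaction (minimal by distality of nilsystems, and again a nilsystem), and translate back through the permutation and the inclusion $R'\times\{0\}^{d-d'}\subseteq\pi(R)$. The only difference is that you take the orbit closure of a point of $U$ while the paper uses $\overline{\mathcal{O}(e_X)}$, which if anything makes your write-up slightly more careful about why the given open set is seen by the subsystem.
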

\begin{proof}
We prove the nontrivial direction. Let $\mathscr{X}=(X,T_1,\ldots,T_d)$ be a minimal nilsystem and $R\subseteq \Z^d$ be a set of $\Z^{d}$-Bohr recurrence. We may assume $0\notin R$, otherwise the result is immediate. By \cref{essential-Bohr-set} there exist $d'\leq d$, a permutation $\pi\colon [d]\to [d]$ and an essential set of $\Z^{d'}$-Bohr recurrence $R'\subseteq \Z^{d'}$ such that $R'\times \{0\}^{d-d'}\subseteq \pi(R)$. Consider the minimal nilsystem
            $$\mathscr{Y}=(\overline{\mathcal{O}(e_X)}, T_{\pi(1)},\ldots, T_{\pi(d')}).$$ 
     Notice that for all $n\in R'$ if $m\in \{n\}\times \{0\}^{d-d'}\subseteq \pi(R)$ we have that
        $$ T_{\pi(1)}^{n_1}\cdots T_{\pi(d')}^{n_{d'}}= T_1^{m_{\pi^{-1}(1)}}\cdots T_d^{m_{\pi^{-1}(d)}}.$$
        In this way, since $R'$ is a set of recurrence for $\mathscr{Y}$, it follows that $R$ is a set of recurrence for $\mathscr{X}$.    
\end{proof}
The following property enables us to consider coordinates of an essential set of $\Z^{d}$-Bohr recurrence with arbitrary magnitudes. This is the intuitive generalization of the fact for sets of $\Z-$Bohr recurrence we can always eliminate a finite amount of elements without losing the property of Bohr recurrence.
\begin{prop}[Bands Property]\label{Bands-Prop}
Let $R\subseteq \Z^d$ be a set of $\Z^{d}$-Bohr recurrence, $k\in \Z\setminus\{0\}$ and $i\in [d]$. Set $B_k^i=\{\boldsymbol{n}\in \Z^d  :  n_i=k\}$, then the set $R_0=R\setminus B_k^i$ is a set of $\Z^{d}$-Bohr recurrence.
\end{prop}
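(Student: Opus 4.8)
The plan is to prove directly that $R_0=R\setminus B_k^i$ intersects every Bohr$_0$ set. First I would fix an arbitrary Bohr$_0$ set $V$; since $V$ contains a Bohr neighborhood of the origin, it suffices to show that $R_0$ meets an arbitrary set of the form $\text{Bohr}(\chi_1,\ldots,\chi_m;\epsilon)$ with $\chi_1,\ldots,\chi_m\in\widehat{\Z^d}$ and $\epsilon>0$.

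The core idea is then to augment this finite list of characters with a single auxiliary character, tuned so as to forbid the value $n_i=k$ while still vanishing at the origin. Concretely, I would choose any irrational $\theta$ and set $\psi(\boldsymbol{n})=e^{2\pi i\theta n_i}$, which is a genuine element of $\widehat{\Z^d}$ (it corresponds to $\theta\mathbf{e}_i\in\T^d$). Since $k\neq0$ and $\theta$ is irrational we have $\theta k\notin\Z$, hence $c:=|e^{2\pi i\theta k}-1|>0$, and $|\psi(\boldsymbol{n})-1|=c$ for every $\boldsymbol{n}$ with $n_i=k$. Putting $\delta=\min(\epsilon,c)$, the augmented set $W=\text{Bohr}(\chi_1,\ldots,\chi_m,\psi;\delta)$ is again a Bohr$_0$ set.

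Finally, I would apply the hypothesis that $R$ is a set of $\Z^d$-Bohr recurrence to the Bohr$_0$ set $W$, obtaining some $\boldsymbol{n}\in R\cap W$. On one hand $W\subseteq\text{Bohr}(\chi_1,\ldots,\chi_m;\epsilon)\subseteq V$, so $\boldsymbol{n}\in V$; on the other hand the extra constraint $|\psi(\boldsymbol{n})-1|<\delta\le c$ rules out $n_i=k$, since $n_i=k$ would force $|\psi(\boldsymbol{n})-1|=c$. Therefore $\boldsymbol{n}\notin B_k^i$, so $\boldsymbol{n}\in R_0\cap V$, and as $V$ was arbitrary this shows $R_0$ is a set of $\Z^d$-Bohr recurrence.

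I do not expect a serious obstacle here; the only point requiring care is the essential use of the hypothesis $k\neq0$. The irrational character in the $i$-th coordinate separates $n_i=k$ from $n_i=0$ precisely because $\theta k\notin\Z$ while $\theta\cdot0=0$, so $W$ is a legitimate Bohr neighborhood of the origin (it contains $0$) yet it avoids the entire band $B_k^i$. Were $k=0$ instead, every Bohr neighborhood of $0$ would meet $B_0^i$ and the statement would fail, which confirms that the argument must and does exploit $k\neq0$ in an essential way.
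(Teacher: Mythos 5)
Your proof is correct, but it takes a different route from the paper's. The paper first shows that the band $B_k^i$ itself is \emph{not} a set of Bohr recurrence by exhibiting a concrete rotation where it fails to recur --- the cyclic system $(\Z_{|k|+1}, T_1,\ldots,T_d)$ with $T_1(x)=x+1 \bmod (|k|+1)$ and $T_j=\mathrm{id}$ for $j\geq 2$ --- and then concludes via the Ramsey property (partition regularity) applied to the decomposition $R=R_0\cup (R\cap B_k^i)$. You instead work entirely at the level of characters: you produce a Bohr$_0$ set disjoint from $B_k^i$ (via the auxiliary character $\psi(\boldsymbol{n})=e^{2\pi i\theta n_i}$) and exploit the filter property of Bohr$_0$ sets by adjoining $\psi$ to the given list of characters. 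The two arguments are formally dual --- saying that $B_k^i$ is not a set of Bohr recurrence is exactly saying that its complement is a Bohr$_0$ set, and unwinding the proof of partition regularity in this special case yields precisely your filter argument --- but yours is self-contained, needing neither an auxiliary dynamical system nor the Ramsey property as a black box, while the paper's is shorter given that the Ramsey property is already on record. Two minor remarks: the irrationality of $\theta$ is not needed (any $\theta$ with $\theta k\notin\Z$ works, e.g.\ $\theta=1/(|k|+1)$, which is exactly the character underlying the paper's cyclic system; irrationality merely handles all $k\neq 0$ with a single character), and your closing observation about the failure of the statement for $k=0$ correctly identifies where the hypothesis $k\neq 0$ enters both arguments.
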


\begin{proof}
We may assume $i=1$, since all the other cases are analogous.  It suffices to show that $\{k\}\times \Z^{d-1}$ is not a set of recurrence. Consider the system $(X=\Z_{|k|+1},T_1,\ldots,T_d)$, where $\Z_{ |k|+1}$ denotes the cyclic group with $(|k|+1)$ elements, $T_1(x)=x+1 \text{ mod } (|k|+1)$, and $T_j={\rm id}$ for $j\geq 2$. It is immediate to check that $\{k\}\times \Z^{d-1}$ is not a set of recurrence for this system. The Ramsey property allows us to conclude that $R_0$ is a set of recurrence.
\end{proof}
\begin{obs}
 We remark that in the previous proof one can also construct systems where all the transformations act in a non-trivial way.   
\end{obs}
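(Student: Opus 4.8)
The plan is to exhibit, in place of the system used in the proof of \cref{Bands-Prop}, a \emph{rotation} in which every one of the $d$ generators acts non-trivially, while the band $\{k\}\times\Z^{d-1}$ continues to fail to be a set of recurrence. As there, we may assume $i=1$. The key observation is that the obstruction in the original argument is detected \emph{purely} through a finite cyclic factor that depends only on the first coordinate; so the idea is to retain exactly that detecting factor and simply adjoin a torus factor on which all generators move. Non-triviality is then supplied by the torus, while the obstruction keeps living on the cyclic factor.

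Concretely, I would take $X=\Z_{|k|+1}\times\T$, a compact abelian group, together with the rotation associated to the homomorphism $\varphi\colon\Z^d\to X$ given by
\begin{equation*}
\varphi(\boldsymbol{n})=\Big(n_1 \bmod (|k|+1),\ n_1\alpha_1+\cdots+n_d\alpha_d\Big),
\end{equation*}
where $\alpha_1,\ldots,\alpha_d\in\T\setminus\{0\}$ are fixed. Then $T_j=\varphi(\boldsymbol{e}_j)$ translates the torus coordinate by $\alpha_j\neq 0$, so each $T_j$ is a non-trivial homeomorphism of $X$, as required. Since $(X,\Z^d)$ is a rotation, by the identification of sets of Bohr recurrence with sets of recurrence for rotations noted after \cref{RotationsAreBohr}, it suffices to check that $\{k\}\times\Z^{d-1}$ is not a set of recurrence for this system.

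For this I would take the open set $U=\{0\}\times W$, with $W$ a small arc of $\T$ around $0$. Any point of $T_1^{n_1}\cdots T_d^{n_d}U\cap U$ must agree with $U$ in the first coordinate, which forces $n_1\equiv 0 \bmod (|k|+1)$; since $1\leq|k|$ we have $(|k|+1)\nmid k$, so this fails whenever $n_1=k$. Hence no $\boldsymbol{n}$ with $n_1=k$ lies in $N_{\Z^d}(U)$, so $\{k\}\times\Z^{d-1}$—and therefore $R\cap B_k^1$—is not a set of Bohr recurrence, and the conclusion of \cref{Bands-Prop} follows from the Ramsey property exactly as before.

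The only real subtlety, and the point the remark is flagging, is that one cannot make all generators act non-trivially by letting them all move a single detecting coordinate: if, say, $T_2$ acted non-trivially on the cyclic factor, the freedom in the exponent $n_2$ could be used to cancel the contribution of $n_1=k$ and restore recurrence. The resolution is precisely the decoupling above—detection is confined to a factor that reads only $n_1$, while the non-triviality of the remaining generators is absorbed into an independent factor where no obstruction is needed. I expect verifying this decoupling, rather than any computation, to be the conceptual crux; the rest is routine.
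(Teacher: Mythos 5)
Your construction is correct and is essentially the construction the remark has in mind: keep a detecting cyclic factor that reads only $n_1$, and adjoin a rotation factor on which every generator moves, so that non-triviality and the obstruction live on independent factors. One small caveat: with arbitrary $\alpha_1,\ldots,\alpha_d\in\T\setminus\{0\}$ (e.g.\ all rational) the rotation $(X,\Z^d)$ need not be minimal, whereas the identification of Bohr recurrence with recurrence for rotations rests on \cref{def:recurrence_set}, which quantifies over \emph{minimal} systems; this is repaired either by taking some $\alpha_j$ with $j\geq 2$ irrational (making $\varphi(\Z^d)$ dense in $X$), or more directly by applying \cref{RotationsAreBohr} — which assumes no minimality — at the point $e_X$ to conclude that $N(e_X,U)$ is a Bohr$_0$ set disjoint from $\{k\}\times\Z^{d-1}$, so the band fails Bohr recurrence outright and the Ramsey argument proceeds as in \cref{Bands-Prop}. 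Your closing observation, that the generators $T_2,\ldots,T_d$ must not act on the detecting coordinate lest the exponents $n_2,\ldots,n_d$ cancel the obstruction at $n_1=k$, is exactly the right subtlety.
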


Next, we prove that the property of being a set of $\Z^d$-Bohr recurrence is invariant under isomorphisms of $\Z^d$.
\begin{prop}\label{prop3.4}
Let $R\subseteq \Z^d$ be a set of $\Z^{d}$-Bohr recurrence and $M$ an invertible matrix with rational coefficients of dimension $d$. Then, the set defined by 
$$M^{-1}R\cap \Z^d=\{ n=(n_1,\ldots, n_d)^T\in  \Z^{d}  :  M n \in R \}, $$
is a set of $\Z^{d}$-Bohr recurrence. 
\end{prop}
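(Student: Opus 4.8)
The plan is to realize the set $S=\{n\in\Z^d : Mn\in R\}$ as the image under the linear map $M^{-1}$ of a suitably restricted copy of $R$, and then to invoke the two transfer principles already established: \cref{prop:finite_index} (restriction to a finite-index subgroup) and \cref{prop:morphism_recurrence} (pushforward under a homomorphism). The obstruction to applying \cref{prop:morphism_recurrence} directly to $M^{-1}$ is that $M^{-1}$ does not map $\Z^d$ into $\Z^d$; I would circumvent this by working on the subgroup on which $M^{-1}$ is integer-valued, which is also exactly where the relevant integrality matches up with $R\subseteq\Z^d$.

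Concretely, set $H_0=\Z^d\cap M\Z^d$. First I would check that $H_0$ has finite index in $\Z^d$: choosing a positive integer $q$ that clears the denominators of $M^{-1}$, one has $M^{-1}(qv)=(qM^{-1})v\in\Z^d$ for every $v\in\Z^d$, so $q\Z^d\subseteq M\Z^d$, and hence $q\Z^d\subseteq H_0$; since $[\Z^d:q\Z^d]=q^d<\infty$, the subgroup $H_0$ is of finite index. As $\Z^d$ is discrete, $H_0$ is automatically closed, so \cref{prop:finite_index} applies (with $H=H_0$ and $H'=\Z^d$) and yields that $R\cap H_0$ is a set of $H_0$-Bohr recurrence.

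Next I would consider the group homomorphism $\phi=M^{-1}\colon H_0\to\Z^d$; it is well defined and integer-valued precisely because every $m\in H_0$ lies in $M\Z^d$, so $M^{-1}m\in\Z^d$. By \cref{prop:morphism_recurrence}, $\phi(R\cap H_0)=M^{-1}(R\cap H_0)$ is a set of $\Z^d$-Bohr recurrence. It then remains to identify this image with $S$. For the inclusion $M^{-1}(R\cap H_0)\subseteq S$, any $m\in R\cap H_0$ gives $n=M^{-1}m\in\Z^d$ with $Mn=m\in R$, so $n\in S$. For the reverse inclusion, if $n\in S$ then $m=Mn\in R\subseteq\Z^d$ and also $m\in M\Z^d$, hence $m\in\Z^d\cap M\Z^d=H_0$ and $m\in R$, so $n=M^{-1}m\in M^{-1}(R\cap H_0)$. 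Thus $S=M^{-1}(R\cap H_0)$ is a set of $\Z^d$-Bohr recurrence.

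The only genuinely delicate point is the non-integrality of $M^{-1}$ flagged above: the whole argument hinges on choosing the domain $H_0=\Z^d\cap M\Z^d$ so that the restriction and the pushforward compose cleanly and, crucially, so that the resulting image equals $S$ on the nose rather than merely a subset. Once $H_0$ is in place, both invoked propositions apply verbatim and the set-theoretic identity is routine; no new recurrence estimate is required.
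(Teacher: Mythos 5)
Your proof is correct and rests on exactly the same two ingredients as the paper's own argument --- \cref{prop:finite_index} and \cref{prop:morphism_recurrence} --- only composed in the opposite order: you restrict to the finite-index subgroup $H_0=\Z^d\cap M\Z^d$ first and then push forward by $M^{-1}$, whereas the paper pushes $R$ forward to the group $M^{-1}\Z^d$ first and then intersects with $\Z^d$. Your write-up is in fact somewhat more careful than the paper's terse version, since you verify both the finite-index claim and the set identity $M^{-1}(R\cap H_0)=\{n\in\Z^d : Mn\in R\}$ explicitly.
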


\begin{proof}
By \cref{prop:morphism_recurrence}, the set $M^{-1}R$ is a set of $M^{-1}\Z^d$-recurrence. As $M$ has rational coefficients, \cref{prop:finite_index} implies that $M^{-1}\Z^d \cap \Z^d$ is of $\Z^d$-recurrence. 
\end{proof}

When dealing with sets of $\Z^d$-Bohr recurrence, it is common to encounter situations where some coordinates are rationally dependent. For instance, it is not difficult to demonstrate that if $R \subseteq \mathbb{Z}^2$ is a set of Bohr recurrence, then so is $R' = \{(n_1, n_2, n_1 + n_2) \in \mathbb{Z}^3 \mid (n_1, n_2) \in R\}$. 
 Naturally, the intrinsic recurrence behavior resides in the first two coordinates, rendering the third coordinate seemingly redundant. The following definition formalizes this concept, which will be important in the subsequent section for the purpose of eliminating certain cases.
\begin{defn}[Redundancy]
    Let $R\subseteq \Z^d$ be a set of $\Z^{d}$-Bohr recurrence. We say that $R$ is redundant if there is $d'<d$, a morphism $\phi:\Z^{d'}\to \Z^d$, and a set of $\Z^{d'}$-Bohr recurrence $R'$ such that $\phi(R')\subseteq R$. 
\end{defn}
\begin{obs} 
    Notice that $R\subseteq \Z^d$ is redundant if and only if there is $\boldsymbol{v}\in \Z^d \setminus \{0\}$ such that $R_{\boldsymbol{v}}=\{ \boldsymbol{n}\in R :  \boldsymbol{v}^T\cdot \boldsymbol{n}=0\}$ is a set of $\Z^{d}$-Bohr recurrence. In this case, $v$ is any non zero vector in $\Z^d$ orthogonal to the subspace $\phi(\Z^{d'})$, where $\phi:\Z^{d'}\to \Z^d$ is the morphism associated to $R\subseteq \Z^d$ being redundant. We also observe that taking $\boldsymbol{v}$ in $ \Q^d \setminus\{0\}$ instead of $\Z^d\setminus \{0\}$ leads to the same definition. 
\end{obs}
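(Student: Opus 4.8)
The plan is to prove the two implications separately, with the backward direction carrying the essential content. Throughout I will repeatedly use that being a set of $\Z^d$-Bohr recurrence is upward closed under inclusion (a superset of a set of recurrence is again one, directly from the definition) and that it is preserved by group homomorphisms, via \cref{prop:morphism_recurrence}.

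First, for the direction ``redundant $\Rightarrow$ exists $\boldsymbol{v}$'', I would start from the witnessing data: $d'<d$, a morphism $\phi\colon\Z^{d'}\to\Z^d$, and a set of $\Z^{d'}$-Bohr recurrence $R'$ with $\phi(R')\subseteq R$. The image $\phi(\Z^{d'})$ spans a rational subspace of $\Q^d$ of dimension at most $d'<d$, so its orthogonal complement contains a nonzero rational vector, and hence after clearing denominators a nonzero integer vector $\boldsymbol{v}$ with $\boldsymbol{v}^T\phi(\boldsymbol{m})=0$ for all $\boldsymbol{m}\in\Z^{d'}$. Consequently $\phi(R')\subseteq R_{\boldsymbol{v}}$. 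By \cref{prop:morphism_recurrence} the set $\phi(R')$ is a set of $\Z^d$-Bohr recurrence, and by upward closure so is $R_{\boldsymbol{v}}$.

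For the converse, suppose $\boldsymbol{v}\in\Z^d\setminus\{0\}$ is such that $R_{\boldsymbol{v}}$ is a set of $\Z^d$-Bohr recurrence. Let $L=\{\boldsymbol{n}\in\Z^d:\boldsymbol{v}^T\boldsymbol{n}=0\}$, a subgroup of $\Z^d$ of rank $d-1$, hence isomorphic to $\Z^{d-1}$ via some isomorphism $\psi\colon\Z^{d-1}\to L$. Since $R_{\boldsymbol{v}}\subseteq L$, it suffices to show $R_{\boldsymbol{v}}$ is a set of $L$-Bohr recurrence; for then $R'=\psi^{-1}(R_{\boldsymbol{v}})$ is a set of $\Z^{d-1}$-Bohr recurrence (applying \cref{prop:morphism_recurrence} to $\psi^{-1}$), and $\phi=\iota\circ\psi$, with $\iota\colon L\hookrightarrow\Z^d$ the inclusion, satisfies $\phi(R')=R_{\boldsymbol{v}}\subseteq R$, witnessing redundancy with $d'=d-1<d$.

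The hard part is exactly this reduction: showing that $R_{\boldsymbol{v}}\subseteq L$, which is $\Z^d$-Bohr recurrent, is also $L$-Bohr recurrent. Since $L$ has infinite index in $\Z^d$, \cref{prop:finite_index} does not apply, so I would argue instead by extending characters. The circle $\mathbb{S}^1$ is divisible, hence an injective abelian group, so every $\eta\in\widehat{L}$ extends to some $\tilde\eta\in\widehat{\Z^d}$ with $\tilde\eta|_L=\eta$. Given $\eta_1,\dots,\eta_k\in\widehat{L}$ and $\epsilon>0$ defining an $L$-Bohr neighborhood, I extend each to $\tilde\eta_i\in\widehat{\Z^d}$ and use $\Z^d$-Bohr recurrence of $R_{\boldsymbol{v}}$ to find $\boldsymbol{n}\in R_{\boldsymbol{v}}$ with $|\tilde\eta_i(\boldsymbol{n})-1|<\epsilon$ for all $i$; as $\boldsymbol{n}\in L$ we have $\tilde\eta_i(\boldsymbol{n})=\eta_i(\boldsymbol{n})$, so $R_{\boldsymbol{v}}$ meets the prescribed $L$-Bohr neighborhood. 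Finally, the remark that $\boldsymbol{v}$ may be taken in $\Q^d\setminus\{0\}$ rather than $\Z^d\setminus\{0\}$ is immediate, since $R_{\boldsymbol{v}}=R_{c\boldsymbol{v}}$ whenever $c\neq0$ clears denominators, so both conventions yield exactly the same sets $R_{\boldsymbol{v}}$.
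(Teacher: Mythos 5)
Your proposal is correct. Note that the paper offers no proof of this remark at all --- it is stated as an observation following the definition of redundancy --- so there is nothing to compare against line by line; your argument supplies exactly the justification the authors leave implicit. The forward direction is the routine one (any nonzero integer vector orthogonal to the rational span of $\phi(\Z^{d'})$ works, by \cref{prop:morphism_recurrence} plus upward closure of Bohr recurrence under inclusion), and you correctly identify where the real content lies: in the converse, one must pass from $\Z^d$-Bohr recurrence of $R_{\boldsymbol{v}}$ to $L$-Bohr recurrence, where $L=\ker(\boldsymbol{n}\mapsto\boldsymbol{v}^T\boldsymbol{n})\cong\Z^{d-1}$. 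Neither of the paper's stated tools handles this: \cref{prop:finite_index} fails since $L$ has infinite index in $\Z^d$, and \cref{prop:morphism_recurrence} only pushes recurrence forward along a morphism, never back. Your character-extension argument (every $\eta\in\widehat{L}$ extends to $\widehat{\Z^d}$ because $\mathbb{S}^1$ is divisible, hence injective) closes this gap cleanly; an equally valid variant would use that $\Z^d/L\cong\Z$ is free, so $L$ is a direct summand and characters extend by declaring them trivial on a complement. The final point about $\boldsymbol{v}\in\Q^d\setminus\{0\}$ versus $\Z^d\setminus\{0\}$ is handled correctly by scaling. In short: correct, complete, and it makes explicit a restriction-to-subgroup lemma for Bohr recurrence that the paper uses silently.
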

We can provide similar definitions in the context of locally compact abelian groups; however, for clarity, we will limit our discussion to cases involving $\Z^d$ or images of $\Z^d$ by matrices with rational coefficients. Note that in such cases, a morphism is just a matrix multiplication. 

To end this subsection, we prove that the property of being non-redundant is preserved under the transformation previously described in \cref{prop3.4}.
\begin{prop}\label{prop3.4-v2}
Let $R\subseteq \Z^d$ be a non redundant set of $\Z^{d}$-Bohr recurrence and $M$ be an invertible real matrix with rational coefficients of dimension $d$. Then, $M^{-1} R \cap \Z^d$ is non redundant as well.
\end{prop}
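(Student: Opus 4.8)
The plan is to argue by contraposition: I will assume that $S:=M^{-1}R\cap\Z^d$ is redundant and deduce that $R$ must be redundant as well. The natural tool is the reformulation of redundancy given in the remark following the definition, namely that a set $T\subseteq\Z^d$ of $\Z^d$-Bohr recurrence is redundant if and only if there exists $\boldsymbol{v}\in\Q^d\setminus\{0\}$ such that $T_{\boldsymbol{v}}=\{\boldsymbol{n}\in T:\boldsymbol{v}^{T}\boldsymbol{n}=0\}$ is again a set of $\Z^d$-Bohr recurrence. So I start from a vector $\boldsymbol{w}\in\Q^d\setminus\{0\}$ witnessing the redundancy of $S$, that is, with $S_{\boldsymbol{w}}$ a set of $\Z^d$-Bohr recurrence.

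The key step is to transport this orthogonality relation through $M$. I set $\boldsymbol{v}=(M^{-1})^{T}\boldsymbol{w}$, which lies in $\Q^d\setminus\{0\}$ because $M^{-1}$ is rational and invertible. Substituting $\boldsymbol{n}=M\boldsymbol{m}$ and using $\boldsymbol{w}^{T}\boldsymbol{m}=\boldsymbol{w}^{T}M^{-1}\boldsymbol{n}=\boldsymbol{v}^{T}\boldsymbol{n}$ (equivalently $M^{T}\boldsymbol{v}=\boldsymbol{w}$), a short computation yields the identity
\[
M\,S_{\boldsymbol{w}}=R_{\boldsymbol{v}}\cap M\Z^d .
\]
Since $R\subseteq\Z^d$, the right-hand side is contained in $\Z^d$, so in fact $M\,S_{\boldsymbol{w}}\cap\Z^d=R_{\boldsymbol{v}}\cap M\Z^d$.

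Now I would apply \cref{prop3.4} to the invertible rational matrix $M^{-1}$ and the set $S_{\boldsymbol{w}}$: since $S_{\boldsymbol{w}}$ is a set of $\Z^d$-Bohr recurrence, \cref{prop3.4} gives that $(M^{-1})^{-1}S_{\boldsymbol{w}}\cap\Z^d=M\,S_{\boldsymbol{w}}\cap\Z^d=R_{\boldsymbol{v}}\cap M\Z^d$ is a set of $\Z^d$-Bohr recurrence. Because the family of sets of $\Z^d$-Bohr recurrence is upward closed (any superset of a $\text{Bohr}_0^*$ set still meets every $\text{Bohr}_0$ set) and $R_{\boldsymbol{v}}\cap M\Z^d\subseteq R_{\boldsymbol{v}}$, it follows that $R_{\boldsymbol{v}}$ is a set of $\Z^d$-Bohr recurrence. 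As $\boldsymbol{v}\in\Q^d\setminus\{0\}$, the characterization of redundancy shows that $R$ is redundant, which is the contrapositive of what we want.

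The only real obstacle is the bookkeeping rather than any deep idea: I must make sure that the image $M\,S_{\boldsymbol{w}}$ genuinely sits inside $\Z^d$, so that \cref{prop3.4} can be invoked in the ``reverse'' direction through the matrix $M^{-1}$, and that the transported vector $\boldsymbol{v}$ is both nonzero and rational. Both are automatic here: the first because $R_{\boldsymbol{v}}\subseteq R\subseteq\Z^d$, and the second because $M$ has rational entries; what makes everything fit together is the elementary identity $M^{T}\boldsymbol{v}=\boldsymbol{w}$ matching the two orthogonality conditions.
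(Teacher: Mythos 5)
Your proposal is correct and follows essentially the same route as the paper: both apply \cref{prop3.4} with the matrix $M^{-1}$ to the witness set of the (assumed) redundancy of $M^{-1}R\cap\Z^d$, transport the orthogonality vector to $M^{-T}\boldsymbol{w}\in\Q^d\setminus\{0\}$, and conclude via upward closure of Bohr recurrence together with the remark characterizing redundancy. The only difference is cosmetic (contraposition versus contradiction, and your slightly more explicit identity $M\,S_{\boldsymbol{w}}=R_{\boldsymbol{v}}\cap M\Z^d$ where the paper settles for the inclusion $M(M^{-1}R\cap\Z^d)_{\boldsymbol{v}}\subseteq R_{M^{-T}\boldsymbol{v}}$).
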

\begin{proof}
By contradiction, suppose that $M^{-1} R \cap \Z^d$ is redundant, then there exists $\boldsymbol{v}\in \Z^d\setminus\{0\}$ such that
  $$(M^{-1} R \cap \Z^d)_{\boldsymbol{v}}=\{\boldsymbol{n}\in M^{-1} R \cap \Z^d  :  \boldsymbol{v}^T\cdot \boldsymbol{n}=0\},$$
  is a set of $\Z^{d}$-Bohr recurrence. Hence, by \cref{prop3.4}, $M(M^{-1} R \cap \Z^d)_{\boldsymbol{v}}$ is a set of $\Z^{d}$-Bohr recurrence. If we define
  $$R_{M^{-T}\boldsymbol{v}}=\{ \boldsymbol{n}\in R  :  (M^{-T}\boldsymbol{v})^T\boldsymbol{n}=0\},  $$
  then $M(M^{-1} R \cap \Z^d)_{\boldsymbol{v}}\subseteq R_{M^{-T}\boldsymbol{v}}$ and therefore $R_{M^{-T}\boldsymbol{v}}$ is a set of $\Z^{d}$-Bohr recurrence. As $M^{-T}\boldsymbol{v}\in \Q^d\setminus\{0\}$, we have that $R$ is redundant, which is a contradiction. 
  \end{proof} 

\subsection{Bohr Correlations}
In this section we introduce the notion of Bohr correlation, which in simple words is a slope of the line around which a specific set of Bohr recurrence accumulates. To illustrate this notion first we provide some examples. 

\begin{ej}\label{ej1-Bohr}
If $R\subseteq \Z$ is a set of $\Z$-Bohr recurrence, then for every $\boldsymbol{\alpha}=(\alpha_1,\ldots,\alpha_d) \in \R^d$ we have that 
$$\tilde{R}=\{(\lfloor n\alpha_1 + 1/2 \rfloor ,\ldots, \lfloor n\alpha_d + 1/2 \rfloor )\in \Z^{d}  :  n\in R\}, $$
is a set of $\Z^{d}$-Bohr recurrence.
\end{ej}
In \cref{ej1-Bohr} the set of $\Z^d$-Bohr recurrence $\tilde{R}$ is approximately near the line of slope $(\alpha_1,\ldots,\alpha_d)$, in the sense that for $\boldsymbol{n}=(n_1,\ldots,n_d) \in \tilde{R}$ and $i,j\in [d]$ we have that $n_i/n_j\approx \alpha_i/\alpha_j$ as $\lim_{n\to \infty } \lfloor n\alpha_i + 1/2\rfloor/\lfloor n\alpha_j + 1/2\rfloor = \alpha_i/\alpha_j$.
\begin{proof}[Proof of \cref{ej1-Bohr}]
Let $R\subseteq \Z$ be a set of $\Z$-Bohr recurrence and consider $\alpha_1,\ldots,\alpha_d\in \R$. Define 
$$\tilde{R}=\{( \lfloor n\alpha_1 + 1/2\rfloor ,\ldots, \lfloor n\alpha_d + 1/2\rfloor )\in \Z^{d}  :  n\in R\}. $$
    Let $r\in \N,$ $\epsilon\in (0,1)$ and $\boldsymbol{\beta}_1,\ldots, \boldsymbol{\beta}_d\in [0,1)^r$. Consider the Bohr neighborhood of $0$
    $$B=\{n\in \Z  :  ~\norm{n\alpha_i }\leq \epsilon/2,~  \norm{n(\alpha_i\boldsymbol{\beta}_i) }\leq \epsilon/2,~ \forall i\in [d]\}.  $$ 
   
    As the family of Bohr neighborhood of $0$ is a filter, we have that $R\cap B$ is a set of $\Z$-Bohr recurrence. Taking $n\in R\cap B$ we have that for every $i\in [d]$
$$ \norm{\lfloor n\alpha_i + 1/2 \rfloor \boldsymbol{\beta}_i}= \norm{n\alpha_i\boldsymbol{\beta}_i + \boldsymbol{\beta}_i/2 - \{n\alpha+1/2\}\boldsymbol{\beta}_i} \\ \leq \norm{n\alpha_i\boldsymbol{\beta}_i}+ \norm{n\alpha_i}\cdot |\boldsymbol{\beta}_i|\leq \epsilon/2+ \epsilon/2=\epsilon.$$
Therefore $\tilde{R}$ is a set of $\Z^{d}$-Bohr recurrence.
\end{proof}

For the next example, we need the following proposition. 
\begin{prop}\label{prop4.4}
Let $R\subseteq \Z^2$. Denote $R_1=\{n\in \Z  :  \exists m\in \Z, (n,m)\in R\}$ and for $n\in R_1$ denote by $R(n,\bullet)=\{m\in\Z  :  (n,m)\in R\}.$
Suppose $R_1\subseteq \Z\setminus\{0\}$ is a set of $\Z$-Bohr recurrence. For $n\in R_1$ denote $L_n$ the length of the largest interval contained in $R(n,\bullet)$. If $L_n\to \infty$ as $n$ goes to infinity, then $R$ is a set of Bohr recurrence. 
\end{prop}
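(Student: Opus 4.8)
The plan is to verify the defining property of $\Z^2$-Bohr recurrence directly. Recall that a Bohr$_0$ neighborhood of $0$ in $\Z^2$ contains a set of the form $\{(n,m)\in\Z^2 : \norm{\alpha_i n+\beta_i m}_\T<\epsilon,\ i\in[k]\}$ for some characters $(\alpha_i,\beta_i)\in\T^2$ and some $\epsilon>0$, so I must produce an element of $R$ inside every such set. The natural strategy is to decouple the two constraints: first choose the first coordinate $n$ by exploiting the $\Z$-Bohr recurrence of $R_1$ so that $\norm{\alpha_i n}_\T<\epsilon/2$ for all $i$, and then choose the second coordinate $m$ inside a long interval of the fiber $R(n,\bullet)$ so that $\norm{\beta_i m}_\T<\epsilon/2$ for all $i$. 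The triangle inequality then gives $\norm{\alpha_i n+\beta_i m}_\T<\epsilon$, and since $(n,m)\in R$ this is the desired element.

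To make the second choice work I would use that $S=\{m\in\Z:\norm{\beta_i m}_\T<\epsilon/2,\ \forall i\in[k]\}$ is a $\Z$-Bohr$_0$ set, together with the classical fact that Bohr$_0$ sets in $\Z$ are syndetic: there is a gap bound $G=G(\epsilon)$ such that every interval of $G$ consecutive integers meets $S$. (If one prefers to avoid citing syndeticity, it follows from minimality of the rotation by $(\beta_1,\dots,\beta_k)$ on the closure of its orbit in $\T^k$, applied to the $0$-neighborhood $\{x:\norm{x_i}_\T<\epsilon/2\}$, whose return-time set is exactly $S$.) Consequently, as soon as a fiber $R(n,\bullet)$ contains an interval of length larger than $G$, that interval must contain a point of $S$.

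For the first choice I set $B=\{n\in\Z:\norm{\alpha_i n}_\T<\epsilon/2,\ \forall i\in[k]\}$, again a $\Z$-Bohr$_0$ set. Since the $\Z$-Bohr$_0$ sets form a filter, $R_1\cap B$ is again a set of $\Z$-Bohr recurrence, hence infinite (a Bohr-recurrent subset of $\Z\setminus\{0\}$ must be infinite, since for any finite set of nonzero integers one can find a Bohr$_0$ set avoiding it); thus $R_1\cap B$ contains integers $n$ with $|n|$ arbitrarily large. Invoking the hypothesis $L_n\to\infty$, I fix one such $n^\ast\in R_1\cap B$ with $L_{n^\ast}>G$. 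By definition of $L_{n^\ast}$ the fiber $R(n^\ast,\bullet)$ contains an interval $I$ of length $L_{n^\ast}>G$, so by the previous paragraph there is $m^\ast\in I\cap S$. Then $(n^\ast,m^\ast)\in R$, with $\norm{\alpha_i n^\ast}_\T<\epsilon/2$ and $\norm{\beta_i m^\ast}_\T<\epsilon/2$, whence $\norm{\alpha_i n^\ast+\beta_i m^\ast}_\T<\epsilon$ for every $i$, as required.

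The step I would treat most carefully, and the only genuinely delicate point, is guaranteeing that the chosen first coordinate $n^\ast$ is \emph{simultaneously} (i) in the Bohr$_0$ set $B$ coming from the first-coordinate characters, and (ii) large enough that $L_{n^\ast}$ exceeds the syndeticity gap $G$. This is exactly where the two hypotheses are used in tandem: the Bohr recurrence of $R_1$ makes $R_1\cap B$ not merely nonempty but infinite, while the condition $L_n\to\infty$ upgrades ``infinitely many admissible $n$'' to ``an admissible $n$ with an arbitrarily long fiber interval.'' Everything else reduces to the triangle inequality and the standard syndeticity of Bohr$_0$ sets, so I expect no further obstruction.
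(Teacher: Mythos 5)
Your proof is correct and follows essentially the same strategy as the paper's: the paper likewise decouples the two coordinates, using Bohr recurrence of $R_1$ to select an arbitrarily large first coordinate $n_1$ returning close to the identity, and syndeticity of the return times of the second rotation (obtained there from distality/minimality, which is the same fact underlying your syndeticity of Bohr$_0$ sets) combined with $L_n\to\infty$ to find the second coordinate inside a long fiber interval. The only difference is one of language — the paper argues for a minimal rotation $(X,g_1,g_2)$ in dynamical terms, while you verify the character/Bohr-set definition directly — which the paper's Section 3.1 shows to be equivalent.
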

\begin{proof}
If $(X,g_1,g_2)$ is a minimal rotation, we can find an arbitrarily large $n_1\in R_1$ such that $d(g_1^{n_1}e_X,e_X)\leq \epsilon/2$. Observe that the set $N_{g_2}(e_X,B(e_X,\epsilon/2))$ is syndetic (i.e., it has bounded gaps) by the fact that the system $(X,g_2)$ is distal, so $(\overline{\mathcal{O}_{g_2}(e_X)},g_2)$ is minimal. Taking $n_1$ such that $L_{n_1}>L$ (with $L$ the syndetic constant associated to $B(e_X,\epsilon/2)$), we have that there exists $n_2\in R(n_1,\bullet)$ such that $d(g_2^{n_2}e_X,e_X)\leq \epsilon/2$. Thus, there exists $(n_1,n_2)\in R$ such that $d(g_1^{n_1}g_2^{n_2}e_X,e_X)\leq \epsilon$.
\end{proof}
\begin{ej}\label{Slope-example}
          From \cref{prop4.4}, the set $R=\{(n_1,n_2)\in \N^2  :  n_1^2\leq n_2\leq 2 n_1^2\}$ is a set of Bohr recurrence  such that is approximately near a line of slope $\infty$, in the sense that for every $\epsilon>0$, there is $M>0$ such that for all $(n_1,n_2)\in R\cap B_{\R^2}(0,M)^c$ we have that $|n_1/n_2|<\epsilon.$
\end{ej}

To formalize the notion illustrated in the examples, we first show that when studying $\Z^d$-Bohr recurrence, we can assume, without loss of generality, that the coordinates of the set of $\Z^d$-Bohr recurrence being studied are ordered. Although this assumption is not strictly necessary, it significantly simplifies the notation, making it easier to manipulate.

We call $R\subseteq \Z^d$ an \textit{ordered set of $\Z^d$-Bohr recurrence} if $R$ is a set of $\Z^d$-Bohr recurrence such that
$$\forall \boldsymbol{n}=(n_1,\ldots,n_d)\in R, ~~ |n_1|\geq \cdots\geq |n_d|.$$ 

\begin{obs}\label{Borh-ordered}
Let $R\subseteq \Z^d$ be a set of $\Z^d$-Bohr recurrence and $(X,T_1,\ldots,T_d)$ a system. Consider $S_d$ the symmetric group and notice that
 $$R=\bigcup_{\varphi\in S_d} R\cap \{(n_1,\ldots,n_d)\in \Z^d  :  |n_{\varphi(1)}|\geq \cdots \geq |n_{\varphi(d)}|\}.$$
By the Ramsey property, there is $\varphi \in S_d$ such that
$$R'= R\cap \{(n_1,\ldots,n_d)\in \Z^d  :  |n_{\varphi(1)}|\geq \cdots \geq |n_{\varphi(d)}|\},$$ 
is a set of $\Z^d$-Bohr recurrence. Note that recurrence in $(X,T_1,\ldots,T_d)$ with $R'$ is equivalent to recurrence in $(X,T_{\varphi(1)},\ldots,T_{\varphi(d)})$ with 
 $$R''=\{ (n_{\varphi(1)},\ldots ,n_{\varphi(d)})   :  (n_1,\ldots,n_d) \in R'\},$$
which is an ordered set of $\Z^d$-Bohr recurrence. Additionally, notice that if $R$ is essential, then so is $R''$.
\end{obs}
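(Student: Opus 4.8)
The plan is to reduce to an ordered set by sorting the coordinate magnitudes: I isolate a ``sorted'' piece of $R$ using partition regularity, and then relabel both the coordinates and the transformations so that the recurrence problem is literally unchanged.

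First I would cover $\Z^d$ by the finitely many cells $C_\varphi = \{\boldsymbol{n}\in\Z^d : |n_{\varphi(1)}| \ge \cdots \ge |n_{\varphi(d)}|\}$ for $\varphi \in S_d$, which gives $R = \bigcup_{\varphi \in S_d} (R \cap C_\varphi)$. Since the family of sets of $\Z^d$-Bohr recurrence is partition regular (the Ramsey property), at least one cell $R' = R \cap C_\varphi$ is again a set of $\Z^d$-Bohr recurrence. To be careful, I would first break ties among equal magnitudes by a fixed rule (say, by index) so that the cells $C_\varphi$ form a genuine partition rather than a mere cover; this adjustment is cosmetic and does not affect the application of partition regularity.

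Next, I would relabel via the coordinate automorphism $\pi_\varphi \colon \Z^d \to \Z^d$, $\pi_\varphi(\boldsymbol{n}) = (n_{\varphi(1)}, \ldots, n_{\varphi(d)})$, and set $R'' = \pi_\varphi(R')$; by construction $R''$ is ordered. Because $\pi_\varphi$ is a group isomorphism, \cref{prop:morphism_recurrence} guarantees that $R''$ remains a set of $\Z^d$-Bohr recurrence, and permuting coordinates neither creates nor removes zero entries, so essentiality passes from $R$ to $R''$. The conceptual heart is the recurrence equivalence: for $\boldsymbol{n} = (n_1, \ldots, n_d) \in R'$ the element acting on $X$ is $T_1^{n_1} \cdots T_d^{n_d}$, which equals $T_{\varphi(1)}^{n_{\varphi(1)}} \cdots T_{\varphi(d)}^{n_{\varphi(d)}}$ because the $T_i$ commute, and this is precisely the element that $\pi_\varphi(\boldsymbol{n}) \in R''$ determines in the relabelled system $(X, T_{\varphi(1)}, \ldots, T_{\varphi(d)})$. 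Hence $R'$ is a set of recurrence for $(X, T_1, \ldots, T_d)$ if and only if $R''$ is a set of recurrence for $(X, T_{\varphi(1)}, \ldots, T_{\varphi(d)})$.

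I expect no serious obstacle here: the only two points requiring attention are the tie-breaking needed to turn the cover into an honest partition, and the use of \cref{prop:morphism_recurrence} to transport Bohr recurrence through the permutation automorphism. Everything else is bookkeeping with the commuting transformations, so the reduction to an ordered (and still essential) set is fully justified and legitimizes working only with ordered sets in the sequel.
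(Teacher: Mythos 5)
Your proposal is correct and follows essentially the same route as the paper: the same cover of $R$ by the cells indexed by $S_d$, the same appeal to the Ramsey property, and the same relabelling of coordinates and transformations using commutativity. Your two added touches (tie-breaking to make the cover a genuine partition, and invoking \cref{prop:morphism_recurrence} for the permutation automorphism) are harmless refinements of details the paper leaves implicit, not a different argument.
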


\begin{defn}[Bohr Correlations]\label{Bohr-Correlations}
Let $R\subseteq \Z^d$ be an essential and ordered set of $\Z^d$-Bohr recurrence and $I\subseteq \{(i,j)\in [d]^2  :  j\geq i\}$. We define a vector of $I$-Bohr correlations for $R$ as $\boldsymbol{P}=(P_{i,j})_{j\geq i}\in [-1,1]^{d\times (d+1)/2}$ such that $\forall \epsilon>0$,
$$R_{\boldsymbol{P},I,\epsilon} =\Bigl\{\boldsymbol{n}\in R  :  \left |\frac{n_{j}}{n_i}-P_{i,j} \right |\leq \epsilon,~ \forall (i,j)\in I\Bigr\}$$
is a set of $\Z^d$-Bohr recurrence. 
In the case $I= \{(i,j)\in [d]^2  :  j\geq i\}$ we drop the prefix $I$ and say that $\boldsymbol{P}$ is a vector of Bohr correlations. We denote by $\mathcal{BC}(R)$ the set of Bohr correlations of $R$.
\end{defn}
For example, for $d=2$, a vector of Bohr correlations is just the slope of a line for which many points (a set of Bohr recurrence amount of points) are inside a cone of angle $\epsilon$ around such line. Notice that it is not always possible to take $\epsilon=0$ (for instance, this is not possible in the set of recurrence from \cref{Slope-example}).

The existence of Bohr correlations is not immediate from the definition, nor is the possibility of extending a vector of $I$-Bohr correlations to a vector of Bohr correlations. We address both issues in the following proposition.

\begin{prop}\label{Bohrcompleteness}
Let $R\subseteq \Z^d$ be an essential and ordered set of $\Z^d$-Bohr recurrence. Consider $I\subseteq \{(i,j)\in [d]^2  :  j\geq i\}$ and $\boldsymbol{Q}=(Q_{i,j})_{(i,j)\in I}\in [-1,1]^I$ a vector of $I$-Bohr correlations. Then, there exists $\boldsymbol{P}\in \mathcal{BC}(R)$ that extends $(Q_{i,j})_{(i,j)\in I}$, in the sense that $P_{i,j}=Q_{i,j}$, $\forall (i,j)\in I$. In particular, for $I=\emptyset$ we have that $\mathcal{BC}(R)\neq \emptyset$.
\end{prop}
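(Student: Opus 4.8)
The plan is to construct $\boldsymbol{P}$ one coordinate at a time, starting from $\boldsymbol{Q}$ and adjoining the missing pairs $(i,j)$ with $j\geq i$ one by one, maintaining at each stage a vector of $I'$-Bohr correlations for a growing index set $I'\supseteq I$. Since there are only finitely many pairs, namely $d(d+1)/2$ in total, this procedure terminates after finitely many steps and yields a full vector of Bohr correlations extending $\boldsymbol{Q}$. Applying this with $\boldsymbol{Q}$ the empty vector (which is a vector of $\emptyset$-Bohr correlations precisely because $R_{\emptyset,\emptyset,\epsilon}=R$ is a set of Bohr recurrence by hypothesis) would then give $\mathcal{BC}(R)\neq\emptyset$, settling the last assertion.

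The heart of the argument is a single extension step, which I would isolate as follows. Suppose $\boldsymbol{Q}$ is a vector of $I$-Bohr correlations and fix $(i_0,j_0)\notin I$ with $j_0\geq i_0$. For $P\in[-1,1]$ and $\epsilon>0$ I set
$$ S(P,\epsilon)=\Bigl\{\boldsymbol{n}\in R : \Bigl|\tfrac{n_j}{n_i}-Q_{i,j}\Bigr|\leq \epsilon\ \forall (i,j)\in I,\ \Bigl|\tfrac{n_{j_0}}{n_{i_0}}-P\Bigr|\leq \epsilon\Bigr\}, $$
so that $S(P,\epsilon)$ is exactly the set $R_{\boldsymbol{P}',I',\epsilon}$ for $I'=I\cup\{(i_0,j_0)\}$ and $\boldsymbol{P}'$ the extension of $\boldsymbol{Q}$ by $P$. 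The goal is to produce a single $P$ for which $S(P,\epsilon)$ is a set of $\Z^d$-Bohr recurrence for \emph{every} $\epsilon>0$.

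I would obtain such a $P$ by a compactness argument by contradiction, using two monotonicity facts: being a set of Bohr recurrence is upward closed (a superset of a set of Bohr recurrence meets every Bohr$_0$ set, hence is again one), and $R_{\boldsymbol{Q},I,\epsilon}\subseteq R_{\boldsymbol{Q},I,\epsilon'}$ whenever $\epsilon\leq\epsilon'$. Suppose no good $P$ exists; then for each $P\in[-1,1]$ there is $\epsilon_P>0$ with $S(P,\epsilon_P)$ not a set of Bohr recurrence. The intervals $(P-\epsilon_P,P+\epsilon_P)$ cover the compact interval $[-1,1]$, so finitely many, indexed by $P_1,\ldots,P_N$, already cover it. Because $R$ is essential and ordered, each ratio $n_{j_0}/n_{i_0}$ lies in $[-1,1]$, so with $\epsilon_0=\min_k\epsilon_{P_k}$ the set $R_{\boldsymbol{Q},I,\epsilon_0}$ — which is a set of Bohr recurrence since $\boldsymbol{Q}$ is a vector of $I$-Bohr correlations — is contained in the union over $k$ of the pieces $\{\boldsymbol{n}\in R_{\boldsymbol{Q},I,\epsilon_0} : |n_{j_0}/n_{i_0}-P_k|\leq\epsilon_{P_k}\}$, and by the two monotonicity facts each piece sits inside $S(P_k,\epsilon_{P_k})$. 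Refining this finite cover to a partition and invoking the Ramsey property (partition regularity of $\Z^d$-Bohr recurrence), some cell of the partition, hence one of the covering sets and therefore some $S(P_k,\epsilon_{P_k})$, must be a set of Bohr recurrence, contradicting the choice of $\epsilon_{P_k}$. This forces the existence of a good $P=P_{i_0,j_0}$ and completes the extension step.

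The step I expect to require the most care is the bookkeeping of the two tolerances — the fixed tolerance on the already-chosen correlations versus the tolerance on the new ratio — and verifying that the finite-subcover selection remains compatible once we pass to the common $\epsilon_0$; the monotonicity in $\epsilon$ is exactly what reconciles them. The role of the ordering and essentiality hypotheses is precisely to confine each newly introduced ratio $n_{j_0}/n_{i_0}$ to the compact set $[-1,1]$, which is what makes the finite subcover, and hence the whole inductive scheme, go through.
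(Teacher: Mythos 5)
Your proof is correct, but it takes a different route from the paper's. The paper works with all pairs $(i,j)$ simultaneously: for each scale $1/m$ it covers the compact cube $[-1,1]^{d(d+1)/2}$ of possible ratio vectors by a finite rational grid, applies the Ramsey property to the Bohr recurrence set $R_{\boldsymbol{Q},I,1/m}$ to select one grid cell, centered at some $\boldsymbol{P}^m$, whose intersection with $R_{\boldsymbol{Q},I,1/m}$ is still of Bohr recurrence, extracts a convergent subsequence $\boldsymbol{P}^{m_l}\to\boldsymbol{P}$, and then verifies separately that $R_{\boldsymbol{P},\epsilon}$ is of Bohr recurrence for every $\epsilon>0$ (by upward closure, since $R_{\boldsymbol{P}^{m_l},1/m_l}\subseteq R_{\boldsymbol{P},\epsilon}$ for large $l$) and that $P_{i,j}=Q_{i,j}$ on $I$ (via an explicit $3/m_l$ estimate). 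You instead adjoin one pair at a time and, in each step, argue by contradiction: if no single value works for all tolerances, Heine--Borel turns the pointwise failure radii $\epsilon_P$ into a finite subcover of $[-1,1]$, and the Ramsey property applied to $R_{\boldsymbol{Q},I,\epsilon_0}$, with $\epsilon_0$ the minimum of the finitely many radii, produces a cell inside some $S(P_k,\epsilon_{P_k})$ that is of Bohr recurrence --- a contradiction. The core ingredients (essentiality and ordering confining each ratio to $[-1,1]$, compactness, partition regularity, upward closure of Bohr recurrence) are identical, but your organization buys automatic agreement with $\boldsymbol{Q}$ on $I$, since the already-fixed coordinates are never modified, and avoids the subsequence extraction; the cost is an indirect argument iterated over the at most $d(d+1)/2$ missing pairs, whereas the paper's construction is direct and handles all coordinates in a single pass at the price of the limit-point bookkeeping and the extra estimate identifying $\boldsymbol{P}$ with $\boldsymbol{Q}$ on $I$.
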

\begin{proof}
 Notice that $(\frac{n_{j}}{n_i})_{j\geq i}\in [-1,1]^{d(d+1)/2}$, $\forall \boldsymbol{n}\in R$. For $m\in \N$, we can cover $[-1,1]^{d(d+1)/2}$ with finitely many intervals of the form $\prod_{j\geq i}
 (P_{i,j}-\frac{1}{m},P_{i,j}+\frac{1}{m})$, for $(P_{i,j})_{j\geq i}\in ([-1,1]\cap \Q)^{d(d+1)/2}$. For every $\boldsymbol{n}\in R_{\boldsymbol{Q},I,1/m}$ we have that $(\frac{n_{j}}{n_i})_{j\geq i} \subseteq [-1,1]^{d(d+1)/2}$. Hence, using the Ramsey property (after turning the aforementioned covering into a partition), we obtain $\boldsymbol{P}^{m}=(P_{i,j}^{m})_{j\geq i}\in ([-1,1]\cap \Q)^{d(d+1)/2}$ such that $R_{\boldsymbol{Q},I,1/m}\cap R_{\boldsymbol{P^m},1/m}$ is a set of $\Z^d$-Bohr recurrence, where we define for each $m\in \N$
$$R_{\boldsymbol{P}^m,1/m}:=\Bigl\{\boldsymbol{n}\in R~:~ \Bigl|\frac{n_{j}}{n_i}-P_{i,j}^{m}\Bigr|< \frac{1}{m} ,~\forall j\geq i\Bigr\}.$$
Passing to a subsequence, we may assume that $(\boldsymbol{P}^{m_l})_{l\geq 1}$ converges to $\boldsymbol{P}\in [-1,1]^{d(d+1)/2}$. We claim that 
$$R_{\boldsymbol{P},\epsilon}:=\Bigr\{\boldsymbol{n}\in R ~:~  \Bigl|\frac{n_{j}}{n_i}-P_{i,j}\Bigl|< \epsilon, ~  \forall j\geq i\Bigl\}, $$
is a set of $\Z^d$-Bohr recurrence for all $\epsilon>0$ and that $\boldsymbol{P}$ coincides with $\boldsymbol{Q}$ in $I$. 
Indeed, for the first part notice that as $\boldsymbol{P}^{m_l}\to \boldsymbol{P}$ when $l\to \infty$, we can take $l$ large enough so that $(P^{m_l}_{i,j}-1/m_l,P^{m_l}_{i,j}+ 1/m_l)\subseteq (P_{i,j}-\epsilon,P_{i,j}+\epsilon), \forall j\geq i.$
Therefore, $ R_{\boldsymbol{P}^{m_l},1/m_l}\subseteq R_{\boldsymbol{P},\epsilon} $ and since $R_{\boldsymbol{P}^{m_l},1/m_l}$ is a set of $\Z^d$-Bohr recurrence, so is $R_{\boldsymbol{P},\epsilon}$. We conclude that $\boldsymbol{P}\in \mathcal{BC}(R)$. 
Now for the second part, let $\epsilon>0$ and take $l\in \N$ large enough so that $|P_{i,j}^{m_l}-P_{i,j}|\leq \epsilon$, $\forall j\geq i$, and such that $\frac{1}{m_l}\leq \epsilon$. For all $n\in R_{\boldsymbol{Q},I,1/m_l}\cap R_{\boldsymbol{P}^{m_l},1/m_l}$, and $(i,j)\in I$,
$$|P_{i,j}- Q_{i,j}|\leq \left |\frac{n_j}{n_i}- P_{i,j}^{m_l} \right |  + \left |P_{i,j}^{m_l}- P_{i,j} \right |  +\left | \frac{n_j}{n_i}- Q_{i,j} \right |\leq \frac{3}{m_l}\leq 3\epsilon. $$
Taking $\epsilon\to 0$ yields $P_{i,j}=Q_{i,j}$, $\forall (i,j)\in I$, concluding.
\end{proof}

Now we show that Bohr correlations are consistent, in the sense that if $1\leq i\leq j \leq l\leq d$, then the correlation between the $i$-th coordinate and the $l$-th coordinate is the product of the correlations between the $i$-th coordinate and the $j$-th coordinate, and the correlation between the $j$-th coordinate and the $l$-th 
 coordinate.

\begin{lema}\label{consistentency}
Let $R\subseteq \Z^d$ be an essential and ordered set of $\Z^d$-Bohr recurrence, and $\boldsymbol{P}\in \mathcal{BC}(R)$. Then, for all $ i,j,l\in [d]$ with $i\leq j\leq l$, 
$P_{i,l}=P_{i,j}P_{j,l}. $
\end{lema}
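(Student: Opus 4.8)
The plan is to exploit the multiplicative identity $\frac{n_l}{n_i}=\frac{n_j}{n_i}\cdot\frac{n_l}{n_j}$, valid for every $\boldsymbol{n}=(n_1,\ldots,n_d)\in R$ (well-defined because $R$ is essential, so no coordinate vanishes), and to transfer it to the correlations $P_{i,j}$, $P_{j,l}$, $P_{i,l}$ by an approximation argument. The point is that Bohr correlations record the limiting behaviour of the coordinate ratios, and ratios multiply, so the correlations must multiply too.

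First I would record that, since $\boldsymbol{P}\in\mathcal{BC}(R)$, for every $\epsilon>0$ the set $R_{\boldsymbol{P},\epsilon}$ is a set of $\Z^d$-Bohr recurrence, and in particular nonempty (because $\Z^d$ itself is a Bohr$_0$ set, so any set of Bohr recurrence meets it). Fix $\epsilon>0$ and choose any $\boldsymbol{n}\in R_{\boldsymbol{P},\epsilon}$. By the definition of $R_{\boldsymbol{P},\epsilon}$ this single $\boldsymbol{n}$ satisfies simultaneously
$$\left|\frac{n_j}{n_i}-P_{i,j}\right|\le\epsilon,\qquad \left|\frac{n_l}{n_j}-P_{j,l}\right|\le\epsilon,\qquad \left|\frac{n_l}{n_i}-P_{i,l}\right|\le\epsilon.$$
Because $R$ is ordered we have $|n_1|\ge\cdots\ge|n_d|$, so every ratio $n_j/n_i$ with $j\ge i$ lies in $[-1,1]$; the correlations lie in $[-1,1]$ by definition as well.

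The main estimate is then routine. Writing $a=n_j/n_i$, $b=n_l/n_j$, $A=P_{i,j}$, $B=P_{j,l}$, all in $[-1,1]$ with $|a-A|\le\epsilon$ and $|b-B|\le\epsilon$, the standard product bound gives $|ab-AB|\le|a-A|\,|b|+|A|\,|b-B|\le 2\epsilon$. Combining this with the third inequality above and the identity $n_l/n_i=ab$ yields
$$\left|P_{i,l}-P_{i,j}P_{j,l}\right|\le\left|P_{i,l}-\frac{n_l}{n_i}\right|+\left|\frac{n_l}{n_i}-AB\right|\le \epsilon+2\epsilon=3\epsilon.$$
Since the left-hand side does not depend on $\epsilon$ whereas the bound holds for every $\epsilon>0$, letting $\epsilon\to 0$ gives $P_{i,l}=P_{i,j}P_{j,l}$. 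I do not expect any real obstacle here: once one observes that $R_{\boldsymbol{P},\epsilon}\neq\emptyset$, the whole argument reduces to a single-point approximation. The only points needing care are that the ratios are defined (essentiality) and bounded by $1$ (orderedness) — both are hypotheses — and keeping track of the constant so that the final bound is a fixed multiple of $\epsilon$.
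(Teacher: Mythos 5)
Your proof is correct and is essentially identical to the paper's: both pick a point of $R_{\boldsymbol{P},\epsilon}$ (nonempty since it is a set of Bohr recurrence), apply the triangle inequality with the product bound $\lvert ab-AB\rvert\le\lvert a-A\rvert\,\lvert b\rvert+\lvert A\rvert\,\lvert b-B\rvert$ to get $\lvert P_{i,l}-P_{i,j}P_{j,l}\rvert\le 3\epsilon$, and let $\epsilon\to 0$. Your explicit justification that $R_{\boldsymbol{P},\epsilon}\neq\emptyset$ and that orderedness bounds the ratios by $1$ makes the same argument slightly more self-contained.
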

\begin{proof}
Let $\epsilon>0$, we have that $\forall \boldsymbol{n}\in R_{\boldsymbol{P},\epsilon}$
\begin{align*}
 |P_{i,l}-P_{i,j}P_{j,l}|&\leq \left |\frac{n_l}{n_i}- P_{i,l}\right | + \left | \frac{n_j}{n_i}\frac{n_l}{n_j}-P_{i,j}P_{j,l}\right|   \\
 &\leq \epsilon + \left| \frac{n_j}{n_i}\frac{n_l}{n_j}-P_{i,j}\frac{n_l}{n_j} \right|+ \left| P_{i,j}\frac{n_l}{n_j}  - P_{i,j}P_{j,l}\right| \\
 &\leq   \epsilon + \left | \frac{n_l}{n_j}\right |\epsilon+ \left |P_{i,j} \right | \epsilon \leq 3 \epsilon. 
\end{align*}
Letting $\epsilon$ tend to $0$ we get that $P_{i,l}=P_{i,j}P_{j,l}$.
\end{proof}

Now, we define the property of complete independence, which comprises the case where the correlations are not rational.

\begin{defn}\label{Complete-independence}
    Let $R\subseteq \Z^d $ be an essential and ordered set of $\Z^d$-Bohr recurrence. We say that $R$ has the property of complete independence if there is $\boldsymbol{P}\in \mathcal{BC}(R)$ such that for all $i\in[d]$, the set
    $\{ P_{i,j}  :  j\geq i, P_{i,j}\neq 0\} $ is rationally independent.
\end{defn}

The main consequence of the property of complete independence is that it allows us to approximate multiple coordinates with just one variable, which is the statement of \cref{Theorem-B}. For the readers' convenience, we state \cref{Theorem-B} again below.

\begin{customthm}{B}

Let $R$ be an essential and ordered set of $\Z^d$-Bohr recurrence with the property of complete independence. Then, for all $r\in \N$ and $\epsilon>0$ there exists a set of Bohr recurrence $R_\epsilon\subseteq R$ such that for any $\boldsymbol{n}\in R_\epsilon$ and $\boldsymbol{w}_1,\ldots,\boldsymbol{w}_d\in \T^r$ there is $y\in \R^r$ such that
\begin{align*}
||y||_{\R^r}<\epsilon ~~ \text{ and }~~ ||(n_1y,\ldots,n_d y)-(\boldsymbol{w}_1,\ldots,\boldsymbol{w}_d)||_{\T^{d\cdot r}}<\epsilon.
\end{align*}
\end{customthm}

\begin{proof}
Let $\boldsymbol{P}\in \mathcal{BC}(R)$ such that for all $i\in[d]$,
    $\{ P_{i,j}  :  j\geq i, P_{i,j}\neq 0\} $ are rationally independent. Let $\epsilon>0$ and for $l \in [d]$ denote $I_l=\{ j \in [d]  :  j> l, P_{l,j}\neq 0\}$. As $\{ P_{l,j}\}_{j\in I_l\cup\{l\}}$ is rationally independent, we can find $N\geq r$ large enough such that for all $l\in [d]$, $\{ (nP_{l,j})_{j\in I_l}  :  n\in [-N,N]\}$, is $\epsilon$-dense in $\T^{|I_l|}$. 

Let $M>N/\epsilon$. Take $\boldsymbol{n}\in R_{\boldsymbol{P},\epsilon/M}\cap B(0,M)^c$ and $\boldsymbol{w}_1,\ldots,\boldsymbol{w}_d\in\T^r$. We will assume without loss of generality that $\boldsymbol{w}_1,\ldots,\boldsymbol{w}_d\in [0,1)^r$. Define $\varphi:\R^r\to \R^{d\cdot r}$ by $\varphi(\boldsymbol{x})=(n_1\boldsymbol{x},\ldots,n_d\boldsymbol{x})$ and consider $\boldsymbol{y}=\sum_{i=1}^d\frac{\boldsymbol{y}_i}{n_i},$ where we define $\{\boldsymbol{y}_i\}_{i=1}^d$ inductively as follows: first we define $\boldsymbol{y}_d=\boldsymbol{w}_d, $ and note that $||\frac{\boldsymbol{y}_d}{n_d}||_{\R^r}\leq |\frac{1}{n_d}|\leq \epsilon.$ Next, suppose we have defined for $1\leq l<d$: $\boldsymbol{y}_{l+1},\ldots,\boldsymbol{y}_d \in \R^r\cap B(0,C)$, where $C=N+d+1$. Take $\boldsymbol{k}_{l}\in [-N,N]^{r}$ such that for all $j> l$
    \begin{equation}\label{eq4.6}
        ||P_{l,j}\boldsymbol{k}_{l}+ P_{l,j}(\boldsymbol{w}_{l}-\sum_{i=l+1}^d\{\frac{n_l}{n_i}\boldsymbol{y}_{i}\}) ||_{\T^r}\leq\epsilon.
    \end{equation}
    The existence of each coordinate of $\boldsymbol{k}_l$ is guaranteed by the definition of $N$ and the fact that such condition is trivial for $j\notin I_l$. Define 
    $$\boldsymbol{y}_l=\boldsymbol{w}_l +\boldsymbol{k}_l - \sum_{i=l+1}^d\{\frac{n_l}{n_i}\boldsymbol{y}_{i}\} \in \R^r\cap B(0,C). $$

 In this way, we have that $\norm{\boldsymbol{y}_l/n_l}_{\R^r}\leq  C/|n_l|\leq  (1+\frac{d+1}{N}) \epsilon \leq (d+2)\epsilon.$ For each $j\in [d]$ denote $\boldsymbol{\mathfrak{e}}_j\in (\R^{r\cdot r})^d$ the vector which $j$-th coordinate is the identity matrix on $\R^r$ and each other coordinate is a matrix full of zeros. In other words, we have that for each $\boldsymbol{x}_1,\ldots,\boldsymbol{x}_d\in (\R^r)^d$,
 $$(\boldsymbol{x}_1,\ldots,\boldsymbol{x}_d)= \sum_{j=1}^d\boldsymbol{\mathfrak{e}}_j\boldsymbol{x}_j. $$
 We claim that $$\norm{\varphi(y) -w}_{\T^{d\cdot r}}=\norm{\sum_{i=1}^d 
        \sum_{j>i}\frac{n_j}{n_i}\boldsymbol{\mathfrak{e}}_j\boldsymbol{y}_i}_{\T^{d\cdot r}}.$$

For this we will prove by induction that for every $l=0,...,d$ we have that  
        \begin{align*}
         \norm{\varphi(y) -w}_{\T^{d\cdot r}}= \norm{\sum_{j=1}^{d}n_j\boldsymbol{\mathfrak{e}}_j(\sum_{i=1}^l\frac{\boldsymbol{y}_i}{n_i})+\sum_{i=l+1}^d 
        \sum_{j>i}\frac{n_j}{n_i}\boldsymbol{\mathfrak{e}}_j\boldsymbol{y}_i
        +\sum_{j=1}^{l}\sum_{i=l+1}^d \frac{n_j}{n_i}\boldsymbol{\mathfrak{e}}_j\boldsymbol{y}_i -\sum_{j=1}^l \boldsymbol{\mathfrak{e}}_j \boldsymbol{w}_j}_{\T^{d\cdot r}}.
    \end{align*}
Indeed, we start with $l=d$ for which by definition we have that 
    \begin{align*}
         \norm{\varphi(y) -w}_{\T^{d\cdot r}}=\norm{\sum_{j=1}^dn_j \boldsymbol{\mathfrak{e}}_j  
         (\sum_{i=1}^d\frac{\boldsymbol{y}_i}{n_i}) - \sum_{j=1}^d \boldsymbol{\mathfrak{e}}_j\boldsymbol{w}_j}_{\T^{d\cdot r}}.
    \end{align*}

Now assume that for $1\leq l \leq d$ we have proved that 
        \begin{align*}
         \norm{\varphi(y) -w}_{\T^{d\cdot r}}= \norm{\sum_{j=1}^{d}n_j\boldsymbol{\mathfrak{e}}_j(\sum_{i=1}^l\frac{\boldsymbol{y}_i}{n_i})+\sum_{i=l+1}^d 
        \sum_{j>i}\frac{n_j}{n_i}\boldsymbol{\mathfrak{e}}_j\boldsymbol{y}_i
        +\sum_{j=1}^{l}\sum_{i=l+1}^d \frac{n_j}{n_i}\boldsymbol{\mathfrak{e}}_j\boldsymbol{y}_i -\sum_{j=1}^l \boldsymbol{\mathfrak{e}}_j \boldsymbol{w}_j}_{\T^{d\cdot r}}.
    \end{align*}

Using the definition of $\boldsymbol{y}_l$ we obtain that
    \begin{align*}
&\norm{\sum_{j=1}^{d}n_j\boldsymbol{\mathfrak{e}}_j(\sum_{i=1}^l\frac{\boldsymbol{y}_i}{n_i})+\sum_{i=l+1}^d 
        \sum_{j>i}\frac{n_j}{n_i}\boldsymbol{\mathfrak{e}}_j\boldsymbol{y}_i
    +\sum_{j=1}^{l}\sum_{i=l+1}^d \frac{n_j}{n_i}\boldsymbol{\mathfrak{e}}_j\boldsymbol{y}_i -\sum_{j=1}^l\boldsymbol{\mathfrak{e}}_j \boldsymbol{w}_j}_{\T^{d\cdot r}}\\
         &= \norm{\sum_{j=1}^{d}n_j\boldsymbol{\mathfrak{e}}_j(\sum_{i=1}^{l-1}\frac{\boldsymbol{y}_i}{n_i})+\sum_{i=l+1}^d 
        \sum_{j>i}\frac{n_j}{n_i}\boldsymbol{\mathfrak{e}}_j\boldsymbol{y}_i +\sum_{j=1}^{l-1}\sum_{i=l+1}^d \frac{n_j}{n_i}\boldsymbol{\mathfrak{e}}_j\boldsymbol{y}_i -\sum_{j=1}^{l-1}\boldsymbol{\mathfrak{e}}_j \boldsymbol{w}_j + \sum_{\substack{j=1\\j\neq l}}^d \frac{n_j}{n_l}\boldsymbol{\mathfrak{e}}_j\boldsymbol{y}_l       }_{\T^{d\cdot r}}\\
         &= \norm{\sum_{j=1}^{d}n_j\boldsymbol{\mathfrak{e}}_j(\sum_{i=1}^{l-1}\frac{\boldsymbol{y}_i}{n_i})+\sum_{i=l}^d 
        \sum_{j>i}\frac{n_j}{n_i}\boldsymbol{\mathfrak{e}}_j\boldsymbol{y}_i+\sum_{j=1}^{l-1}\sum_{i=l}^d \frac{n_j}{n_i}\boldsymbol{\mathfrak{e}}_j\boldsymbol{y}_i -\sum_{j=1}^{l-1} \boldsymbol{\mathfrak{e}}_j\boldsymbol{w}_j}_{\T^{d\cdot r}},\\
    \end{align*}
    in where the last equality comes from splitting the last summation into $j>l$ and $j<l$, integrating the former terms into the second summation and the later terms into the third summation, namely: 
    $$ \sum_{i=l+1}^d   \sum_{j>i}\frac{n_j}{n_i}\boldsymbol{\mathfrak{e}}_j\boldsymbol{y}_i +\sum_{j=1}^{l-1}\sum_{i=l+1}^d \frac{n_j}{n_i}\boldsymbol{\mathfrak{e}}_j\boldsymbol{y}_i + \sum_{\substack{j=1\\j\neq l}}^d \frac{n_j}{n_l}\boldsymbol{\mathfrak{e}}_j\boldsymbol{y}_l= \sum_{i=l}^d 
        \sum_{j>i}\frac{n_j}{n_i}\boldsymbol{\mathfrak{e}}_j\boldsymbol{y}_i+\sum_{j=1}^{l-1}\sum_{i=l}^d \frac{n_j}{n_i}\boldsymbol{\mathfrak{e}}_j\boldsymbol{y}_i
      .$$
This concludes the induction, and using the case $l=0$ we get that 
  $$   \norm{\varphi(y) -w}_{\T^{d\cdot r}}=\norm{\sum_{i=1}^d 
        \sum_{j>i}\frac{n_j}{n_i}\boldsymbol{\mathfrak{e}}_j\boldsymbol{y}_i}_{\T^{d\cdot r}}.$$

Moreover, by \eqref{eq4.6} we have that for each $l\in [d]$
\begin{align*}
||P_{l,j}\boldsymbol{y}_l||_{\T^{ r}}&= \Big\Vert  P_{l,j}\boldsymbol{w}_{l}+P_{l,j}\boldsymbol{k}_{l}-P_{l,j}\sum_{i=l+1}^d\{\frac{n_l}{n_i}\boldsymbol{y}_{i}\}\Big\Vert_{\T^{ r}} \leq \epsilon .
\end{align*}

Therefore, we found $y\in B_{\R^r}(0,\epsilon d(d+2))$ such that
\begin{align*}
        \norm{\varphi(y) -w}_{\T^{d\cdot r}}=\norm{\sum_{i=1}^d 
        \sum_{j>i}\frac{n_j}{n_i}\boldsymbol{\mathfrak{e}}_j\boldsymbol{y}_i}_{\T^{d\cdot r}}&\leq \norm{\sum_{i=1}^d \sum_{j>i}P_{i,j}\boldsymbol{\mathfrak{e}}_j\boldsymbol{y}_i}_{\T^{d\cdot r}} +  \sum_{i=1}^d\sum_{j>i} \norm{(\frac{n_j}{n_i}-P_{i,j})\boldsymbol{\mathfrak{e}}_j\boldsymbol{y}_i}_{\T^{d\cdot r}}  \\
        &\leq\sum_{i=1}^d \sum_{j>i} \norm{P_{i,j}\boldsymbol{y}_i}_{\T^{ r}} + \frac{\epsilon Cd^2}{M}\\
        &\leq d^2\epsilon+d^2\epsilon^2 (d+2)\leq (d+3)^3 \epsilon.
\end{align*}
We just proved that for each $r\in \N$ and $\epsilon>0$, there is $M\in \N$ such that for each $\boldsymbol{n}\in R_{P,\epsilon/M}\cap B(0,M)^c $ and $\boldsymbol{w}_1,\ldots,\boldsymbol{w}_d\in \T^r$ there is $\boldsymbol{y}\in \R^r$ satisfying
\begin{equation*}
    ||\boldsymbol{y}||_{\R^r}<d(d+2)\epsilon  ~~ \text{ and }~~ ||(n_1\boldsymbol{y},\ldots,n_d \boldsymbol{y})-(\boldsymbol{w}_1,\ldots,\boldsymbol{w}_d)||_{\T^{d\cdot r}}<(d+3)^3\epsilon.
\end{equation*}

Given that $\epsilon>0$ is arbitrary in the previous statement, we can replace $\epsilon$ with $\epsilon/(d+3)^3$ getting $M\in \N$ such that for each $\boldsymbol{n}\in R_\epsilon:= R_{P,\epsilon/((d+3)^3M)}\cap B(0,M)^c $ and $\boldsymbol{w}_1,\ldots,\boldsymbol{w}_d\in \T^r$ there is $y\in \R^r$ such that 
\begin{equation*}
    ||\boldsymbol{y}||_{\R^r}<d(d+2)\epsilon/(d+3)^3< \epsilon  ~~ \text{ and }~~ ||(n_1\boldsymbol{y},\ldots,n_d \boldsymbol{y})-(\boldsymbol{w}_1,\ldots,\boldsymbol{w}_d)||_{\T^{d\cdot r}}<\epsilon,
\end{equation*}
    concluding.
\end{proof} 
\section{Recurrence in $\Z^d$-Weyl systems}\label{Sec5}

In this section, our objective is to lift $\Z^d$-Bohr recurrence to the broader class of $\Z^d$-Weyl systems. First, we begin with a brief outline of the method. Then, we introduce the concept of total ergodicity for actions of $\Z^d$-groups. Notably, this property is characterized within nilsystems through connectedness. Subsequently, we demonstrate that when extending Bohr recurrence to nilsystems, we can consistently simplify the problem by focusing on scenarios where the set of Bohr recurrence has the property of complete independence. Finally, we use this reduction to establish that sets characterized by $\Z^d$-Bohr recurrence are sets of recurrence for $\Z^d$-Weyl systems.

\subsection{Pivotal Lemma}\label{5.2}
In order to establish \cref{Theorem-A}, \cref{Theorem-B} gives as a corollary a pivotal technical lemma that comes into play. Specifically, when dealing with a set of Bohr recurrence with the property of complete independence, this lemma facilitates the approximation of the final commutator group within a nilspace. This approximation is achieved by applying the powers of commutators from the dynamics, all of which are drawn from the aforementioned set of Bohr recurrence, to a single variable.

\begin{lema}\label{lema-previo-resultado-principal}
Let $s\geq 2$ and let $(X= G/\Gamma,T_1,\ldots,T_d)$ be a minimal $s$-step $\Z^d$-affine system, and $R$ an essential and ordered set of $\Z^d$-Bohr recurrence with the property of complete independence. Then, for all $\epsilon>0$ there is a set of $\Z^d$-Bohr recurrence $R_\epsilon\subseteq R$ such that for all $v\in G_{s}$ and all $\boldsymbol{n}\in R_\epsilon$ there exist $h\in G_{s-1}$ such that 
\begin{equation}
  d_G(h,e_G)< \epsilon, \text{  and  } d_X([h,\tau_1^{n_1}]\cdots [h,\tau_d^{n_d}]\Gamma,v \Gamma)<\epsilon.
\end{equation}
\end{lema}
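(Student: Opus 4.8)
The plan is to reduce the statement to a simultaneous Diophantine approximation that is handled by \cref{Theorem-B}. First I would record the algebraic structure of the commutators appearing in the conclusion. Since $h\in G_{s-1}$ and $\tau_i\in G$, each commutator $[h,\tau_i]$ lies in $[G,G_{s-1}]=G_s$, which is central because $[G,G_s]=G_{s+1}=\{e_G\}$. Centrality gives $[h,\tau_i^{n_i}]=[h,\tau_i]^{n_i}$, and, writing $G_s$ additively (it is a subtorus of $\T^r$ by the affine representation and \cref{Parry-theorems}), the whole product collapses to
\begin{equation*}
[h,\tau_1^{n_1}]\cdots[h,\tau_d^{n_d}]=\sum_{i=1}^{d}n_i\,\phi_i(h),\qquad \phi_i(h):=[h,\tau_i]\in G_s .
\end{equation*}
Bi-additivity of the commutator modulo $G_{s+1}=\{e_G\}$ shows that each $\phi_i\colon G_{s-1}^0\to G_s$ is a continuous homomorphism of tori, where $G_{s-1}^0$ denotes the connected component of the identity of $G_{s-1}$ (which equals $G_{s-1}$ when $s\ge 3$ by \cref{Parry-theorems}, and equals $G_0$ when $s=2$, noting $G_0\subseteq G_1=G_{s-1}$). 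In all cases $G_{s-1}^0$ is a torus because $G_0$ is abelian (the Weyl hypothesis), so I may fix an identification $G_{s-1}^0\cong\T^m$ and lift each $\phi_i$ to a linear map on the universal cover.

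The structural heart of the argument is the claim that $\sum_{i=1}^d \operatorname{Im}\phi_i=G_s$. For $s\ge 3$ one has $G_{s-1}\subseteq G_0$, and since $G_0$ is abelian, $[G_0,G_{s-1}]=\{e_G\}$; as $G=\langle G_0,\tau_1,\dots,\tau_d\rangle$, the group $G_s=[G,G_{s-1}]$ is generated by the $[\tau_i,G_{s-1}]=\operatorname{Im}\phi_i$, and a finite sum of subtori is a closed subtorus, whence $\sum_i\operatorname{Im}\phi_i=G_s$. For $s=2$ I would take $h\in G_0$ and use the explicit affine commutator formula: writing $\mathbf{A}_i=\mathbf I+\mathbf N_i$, one gets $[g,\tau_i]=\mathbf N_i\boldsymbol\alpha_g$ for a translation $g$, so $[\tau_i,\tau_j]=\mathbf N_i\boldsymbol\alpha_j-\mathbf N_j\boldsymbol\alpha_i\in\operatorname{Im}\phi_i+\operatorname{Im}\phi_j$, and again $\sum_i\operatorname{Im}\phi_i=G_2$. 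Consequently every $v\in G_s$ can be written as $v=\sum_{i=1}^d\phi_i(a_i)$ for some $a_1,\dots,a_d\in\T^m$.

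With this in hand the problem becomes simultaneous approximation. I would apply \cref{Theorem-B} with the parameter $r$ set equal to $m=\dim G_{s-1}^0$ and with target points $\boldsymbol w_i:=a_i$, obtaining, for a suitably small $\epsilon'>0$, a set of Bohr recurrence $R_{\epsilon'}\subseteq R$ such that for every $\boldsymbol n\in R_{\epsilon'}$ there is $y\in\R^m$ with $\norm{y}_{\R^m}<\epsilon'$ and $\norm{(n_1y,\dots,n_dy)-(a_1,\dots,a_d)}_{\T^{d\cdot m}}<\epsilon'$. Let $h\in G_{s-1}^0\subseteq G_{s-1}$ be the torus element corresponding to $y$; smallness of $y$ yields $d_G(h,e_G)<\epsilon$ once $\epsilon'$ is small. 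Since each $\phi_i$ is a homomorphism, $n_i\phi_i(h)=\phi_i(n_iy)$, and since $\phi_i$ is Lipschitz (a fixed linear map of tori) we get $d_{G_s}(\phi_i(n_iy),\phi_i(a_i))\le L_i\,\norm{n_iy-a_i}_{\T^m}$. Summing and using $\sum_i\phi_i(a_i)=v$,
\begin{equation*}
d_{G_s}\!\Big(\sum_{i=1}^d n_i\phi_i(h),\,v\Big)\le\sum_{i=1}^d L_i\,\norm{n_iy-a_i}_{\T^m}\le\Big(\sum_{i=1}^d L_i\Big)\epsilon',
\end{equation*}
which controls $d_X([h,\tau_1^{n_1}]\cdots[h,\tau_d^{n_d}]\Gamma,v\Gamma)$. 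Taking $\epsilon'$ small relative to $\epsilon$ and the fixed constants $L_i$ finishes the proof with $R_\epsilon:=R_{\epsilon'}$.

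The main obstacle I anticipate is the structural claim $\sum_i\operatorname{Im}\phi_i=G_s$, together with the routine but essential verification that the constants $L_i$ and the identification $G_{s-1}^0\cong\T^m$ are independent of $\boldsymbol n$ and of $v$; this uniformity is what allows a single application of \cref{Theorem-B} to serve all $\boldsymbol n\in R_\epsilon$ and all $v\in G_s$ at once. The conceptual point is that \cref{Theorem-B} provides exactly a \emph{single} small $y$ whose multiples $n_iy$ hit prescribed targets simultaneously, which is precisely what one commutator-generating element $h$ must satisfy.
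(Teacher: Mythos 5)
Your proposal is correct and takes essentially the same route as the paper's proof: both linearize the commutators through the affine representation so that $[h,\tau_1^{n_1}]\cdots[h,\tau_d^{n_d}]$ becomes a linear expression $\sum_{i=1}^d n_i\phi_i(h)$ with $\phi_i(h)=-(\boldsymbol{M}_i-\boldsymbol{I})\boldsymbol{h}$, both rest on the structural decomposition $G_s=\sum_{i=1}^d\operatorname{Im}\phi_i$ (the paper's \cref{eq-commutators}, which you prove directly rather than cite), and both finish with a single uniform application of \cref{Theorem-B} on the torus $G_{s-1}\cap G_0$ followed by a Lipschitz/uniform-continuity estimate. The uniformity point you flag at the end is indeed the crux, and it is supplied by \cref{Theorem-B} exactly as you use it.
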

\begin{proof}
Writing $X=\T^p$ and $T_ix=\boldsymbol{M}_ix+\boldsymbol{\alpha}_i$ where $\boldsymbol{M}_1,\ldots,\boldsymbol{M}_d$ are unipotent matrices and $\boldsymbol{\alpha}_1,\ldots,\boldsymbol{\alpha}_d\in \T^p$, we recall from \cref{Weyl-section} that we consider $\Gamma$ the group (with composition) generated by the transformations given by the matrices $\boldsymbol{M}_1,\ldots,\boldsymbol{M}_d$; and $G$ corresponds to the group generated by $\Gamma$ and the transformations obtained by translations of elements in $\T^p$. Moreover, we have that $G_0$ is isomorphic to $\T^p$ and thus $G=G_0\Gamma$.

We will use vector notation $\boldsymbol{g}\in \T^p$ for the representatives of elements $g\in G$ that correspond to the rotation by $\boldsymbol{g}$.
We use multiplicative notation for the operation on $G$, but notice that such operation reduces to addition for the representatives in $\T^p$ of elements in $G$. We recall that by \cref{Parry-theorems} the group $G_j$ can be seen as a subgroup of $\T^p$ for each $j\geq 2$. Moreover, for $j\geq 2$
\begin{equation}\label{eq-commutators}
  G_j\cong \left\{  \sum_{i=1}^d (\boldsymbol{M}_i-\boldsymbol{I})\boldsymbol{g}_i \mid \boldsymbol{g}_i\in G_{j-1}\cap G_0, \text{ for each }i\in [d] \right\} \subseteq \T^p,  
\end{equation}
in where the intersection with $G_0$ only plays a role when $j=2$, because in such case $G_{j-1}=G$, and $G_{j-1}\subseteq G_0$ otherwise.

Denote by $\Psi\colon (G_{s-1}/ G_{s-1} \cap\Gamma)^d  \to G_s $ the map defined by 
$$\Psi(g_1\Gamma,\ldots,g_d\Gamma)=[g_1,\tau_1]\cdots [g_d, \tau_d], ~ \text{ for } g_1,\ldots,g_d\in G_{s-1}\cap G_0,$$ 
which is well defined due to the fact that $G_0\cong \T^p$, and therefore from the definition of $X$ we get $X=G_0\Gamma/\Gamma$ with $G_0\cap \Gamma=\{e_G\}$. 
 Following the comments in \cref{Weyl-section},  for each $i\in [d]$ the commutator $[g_i,\tau_i]$ corresponds to the element $(\boldsymbol{M}_{g_i}-\boldsymbol{I})\boldsymbol{\alpha}_i -(\boldsymbol{M}_i-\boldsymbol{I})\boldsymbol{g}_i$, but $g_i\in  G_0\cong \T^p$ yields $\boldsymbol{M}_{\boldsymbol{g}_i}=\boldsymbol{I}$. In consequence, the commutator $[g_i,\tau_i]$ is represented by the element $-(\boldsymbol{M}_i-\boldsymbol{I})\boldsymbol{g}_i\in \T^p$.

 Observe that there is $q\in \N$ such that $(G_{s-1}/ G_{s-1} \cap\Gamma)\cong \T^q$. Indeed, this is direct from \cref{Parry-theorems} if $s\geq 3$ as $(G_{s-1}/ G_{s-1} \cap\Gamma)=G_{s-1}$ is a subtorus of $\T^p$, and in the case $s=2$, we have that $(G_{s-1}/ G_{s-1} \cap\Gamma)=G/\Gamma=\T^p$ so the same applies for $q=p$. Let $\Phi: \T^q \cong (G_{s-1}/ G_{s-1} \cap\Gamma)\to \T^p$ be such an embedding. Thus, the function $\Psi$ seen as a function from $(\T^q)^d \to \T^p$ corresponds to 
$$\Psi(\boldsymbol{g}_1,\ldots,\boldsymbol{g}_d) =-\sum_{i=1}^d (\boldsymbol{M}_i-\boldsymbol{I})\Phi(\boldsymbol{g}_i), ~ \text{ for } \boldsymbol{g}_1,\ldots,\boldsymbol{g}_d\in \T^q  ,$$

The linear map $\Psi\colon(\T^q)^{d}\to \T^p$ is a uniformly continuous morphism. Let $\epsilon>0$ and take $\delta\in (0,\epsilon)$ be a constant of uniform continuity for $\Psi$ for $\epsilon$. Using \cref{Theorem-B} with $\delta$, we find $R_\epsilon\subseteq R$ such that for every $\boldsymbol{h}_1,\ldots,\boldsymbol{h}_d\in \T^q$ and $\boldsymbol{n}\in R_\epsilon$ there exists $\boldsymbol{h}\in \R^q\cap B(0,\epsilon)$ satisfying
$$\norm{(n_1\boldsymbol{h},\ldots,n_d\boldsymbol{h}) - (\boldsymbol{h}_1,\ldots,\boldsymbol{h}_d)}_{\T^{d\cdot q}}<\delta. $$
Therefore, we get that 
$$\norm{\Psi(n_1\boldsymbol{h},\ldots,n_d \boldsymbol{h})-\Psi(\boldsymbol{h}_1,\ldots,\boldsymbol{h}_d)  }_{\T^p}<\epsilon,$$
or equivalently
$$d_{G_s}([h,\tau_1^{n_1}]\cdots [h,\tau_d^{n_d}],   [h_1,\tau_1]\cdots [h_d,\tau_d])<\epsilon. $$
In consequence
 $$d_{X}([h,\tau_1^{n_1}]\cdots [h,\tau_d^{n_d}]\Gamma,   [h_1,\tau_1]\cdots [h_d,\tau_d]\Gamma)<\epsilon.$$
 As every element of $G_s$ can be represented as $[h_1,\tau_1]\cdots[h_d,\tau_d]$ for some $h_1,\ldots,h_d\in G_{s-1}\cap G_0$ by \cref{eq-commutators}, we conclude.
\end{proof}
\cref{lema-previo-resultado-principal} allows us to prove \cref{Theorem-A} in the case where $R$ is an essential and ordered set of $\Z^d$-Bohr recurrence with the property of complete independence and $X$ is connected. In what follows, we will show how to reduce to such a case and finally show how it can be used to prove \cref{Theorem-A}.

\subsection{Connectedness in Nilsystems}
Notice that in $(X=G/\Gamma, T_1,\ldots,T_d)$ we can always assume that $G$ is spanned by $G_0$ and the elements $\tau_1,\ldots,\tau_d$ defining the dynamics. Indeed, setting $G'=\langle G_0,\tau_1,\ldots,\tau_d\rangle$, we have that $G'$ is an open subgroup of $G$, and since for all $a\in X$ the map $g\to g\cdot a$ is open, the sets $G'\cdot a$, $a\in X$, are open subsets of $X$ that are pairwise equal or disjoint. Since these sets cover $X$, they are closed in $X$, hence compact. Moreover, given that $X$ is compact, there exist $a_1,\ldots,a_k\in X$ such that $\{X_i=G'\cdot a_i\}_{i=1}^k$ covers $X$. For $i=1,\ldots,k$, let $\Gamma_i$ denote the stabilizer of $a_i$ in $G$, that is, $\Gamma_i=g_i\Gamma g_i^{-1}, $
where $g_i\in G$ is an element such that $g_i\cdot e_X=a_i$. Note that $\Gamma_i\cap G'$ represents the stabilizer of $a_i$ in $G'$, and $X_i$ can be viewed as the nilmanifold $G'/(\Gamma_i\cap G')$. Since $\tau_1,\ldots,\tau_d\in G'$, $X_i$ is a $\langle\tau_1,\ldots,\tau_d\rangle$-invariant set and $(X_i,T_1,\ldots,T_d)$ is a nilsystem. In this light, we obtain a partition of $X$ into finitely many nilsystems, and each one can be studied separately. Thus, without loss of generality, we can substitute $G$
by $G'$ and assume that $G$ is spanned by $G_0$ and $\tau_1,\ldots,\tau_d$. 

In order to characterize connectedness in nilsystems, we will first need the following result that characterizes the ergodicity in a $\Z^d$-torus rotation. 
\begin{teo}\label{teo3.1}
Let $(\T^N,\boldsymbol{\alpha}_1,\ldots,\boldsymbol{\alpha}_d)$ be a $\Z^d$-rotation. Then, the following are equivalent:
\begin{enumerate}
    \item $(\T^N,\boldsymbol{\alpha}_1,\ldots,\boldsymbol{\alpha}_d)$ is ergodic,
    \item $\forall k\in \Z^N\setminus\{0\}$, $\exists i\in [d]$ such that $k\cdot \boldsymbol{\alpha}_i\notin \Z$,
    \item $\forall k\in \Z^N\setminus\{0\}$, $\exists (t_i)_{i=1}^d\in \Z^d$, $k\cdot (\sum_{i=1}^d t_i\boldsymbol{\alpha}_i) \notin \Z$.
\end{enumerate}
\end{teo}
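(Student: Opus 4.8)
The plan is to treat the equivalence $(2)\Leftrightarrow(3)$ as an elementary linear observation and to establish $(1)\Leftrightarrow(2)$ by Fourier analysis on $\T^N$, using that the Haar measure is the unique invariant measure and that ergodicity is equivalent to the triviality of the space of $\Z^d$-invariant $L^2$ functions.

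First I would dispose of $(2)\Leftrightarrow(3)$, which is purely arithmetic. For $(2)\Rightarrow(3)$, given $k\in\Z^N\setminus\{0\}$ pick $i\in[d]$ with $k\cdot\boldsymbol{\alpha}_i\notin\Z$ and take $(t_j)_{j=1}^d$ to be the $i$-th standard basis vector of $\Z^d$; then $k\cdot(\sum_{j=1}^d t_j\boldsymbol{\alpha}_j)=k\cdot\boldsymbol{\alpha}_i\notin\Z$. For $(3)\Rightarrow(2)$ I argue by contraposition: if $k\cdot\boldsymbol{\alpha}_i\in\Z$ for every $i\in[d]$, then for any $(t_i)_{i=1}^d\in\Z^d$ we have $k\cdot(\sum_{i=1}^d t_i\boldsymbol{\alpha}_i)=\sum_{i=1}^d t_i(k\cdot\boldsymbol{\alpha}_i)\in\Z$, contradicting $(3)$. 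This part is routine.

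The core is $(1)\Leftrightarrow(2)$. The characters $\chi_k(x)=e^{2\pi i k\cdot x}$, $k\in\Z^N$, form an orthonormal basis of $L^2(\T^N)$, and the $i$-th generator acts by $\chi_k(x+\boldsymbol{\alpha}_i)=e^{2\pi i k\cdot\boldsymbol{\alpha}_i}\chi_k(x)$. Expanding $f=\sum_k c_k\chi_k\in L^2(\T^N)$, invariance of $f$ under the whole $\Z^d$-action is equivalent to invariance under each of the $d$ generators, hence to $c_k(e^{2\pi i k\cdot\boldsymbol{\alpha}_i}-1)=0$ for all $k\in\Z^N$ and all $i\in[d]$. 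Thus the space of invariant functions is the closed span of those $\chi_k$ for which $k\cdot\boldsymbol{\alpha}_i\in\Z$ for every $i\in[d]$. Since $(\T^N,\boldsymbol{\alpha}_1,\ldots,\boldsymbol{\alpha}_d)$ with its Haar measure is ergodic if and only if this space consists only of the constants, i.e.\ if and only if the only such $k$ is $k=0$, ergodicity is exactly the assertion that for every $k\in\Z^N\setminus\{0\}$ there is some $i\in[d]$ with $k\cdot\boldsymbol{\alpha}_i\notin\Z$, which is $(2)$.

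The only point requiring a little care is the reduction of $\Z^d$-invariance to invariance under the $d$ generators, together with the bookkeeping that an $L^2$ function is invariant if and only if each of its nonzero Fourier modes is fixed by every generator; once this is in place the equivalence with $(2)$ is immediate. I do not expect a substantive obstacle here, as this is the standard Fourier criterion for ergodicity of a torus rotation adapted to the several-generator setting.
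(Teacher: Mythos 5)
Your proposal is correct and uses essentially the same argument as the paper: the equivalence $(1)\Leftrightarrow(2)$ via the Fourier expansion $f=\sum_k c_k e^{2\pi i k\cdot x}$ and the observation that invariance forces $c_k(e^{2\pi i k\cdot\boldsymbol{\alpha}_i}-1)=0$, together with the routine arithmetic for $(2)\Leftrightarrow(3)$. The paper organizes this as the cycle $(1)\Rightarrow(2)\Rightarrow(3)\Rightarrow(1)$ rather than your two biconditionals, but the content is the same.
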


\begin{proof}
((1) $\Longrightarrow $ (2)) We prove the contrapositive assertion. Suppose that $\exists \boldsymbol{k}\in \Z^N\setminus\{0\}$ such that $\forall i\in [d]$ we have $\boldsymbol{k}\cdot \boldsymbol{\alpha}_i\in \Z$. Thus,  $f(x)=e^{2\pi i \boldsymbol{k}\cdot x}$ is an invariant function in $L^\infty(\T^N)$ that is not constant, and thus $(\T^N,\boldsymbol{\alpha}_1,\ldots,\boldsymbol{\alpha}_d)$ is not ergodic.

((2) $\Longrightarrow $ (3)) This implication is direct.

((3) $\Longrightarrow $ (1)) Suppose that $(\T^N,\boldsymbol{\alpha}_1,\ldots,\boldsymbol{\alpha}_d)$ is not ergodic. Then, there exists a $\langle \boldsymbol{\alpha}_1,\ldots,\boldsymbol{\alpha}_d\rangle$-invariant function $f\in L^\infty(\T^N)$ which is not constant. Writing $f$ in the classic basis of $L^2(\T^N)$
$$f(x)=\sum_{\boldsymbol{k}\in \Z^N} c_{\boldsymbol{k}} e^{2\pi i \boldsymbol{k}\cdot x}, $$
and using the continuity of the operation $x\to g+x$ ,$\forall g\in \langle \boldsymbol{\alpha}_1,\ldots,\boldsymbol{\alpha}_d\rangle$, we have that 
$$f(gx)= \sum_{\boldsymbol{k}\in \Z^N} c_{\boldsymbol{k}} e^{2\pi i \boldsymbol{k}\cdot g}e^{2\pi i \boldsymbol{k}\cdot x}.$$
By the invariance of $f$, we conclude that $\forall \boldsymbol{k} \in \Z^N$, $c_{\boldsymbol{k}}=c_{\boldsymbol{k}} e^{2\pi i \boldsymbol{k}\cdot g}. $ As $f$ is not constant, it exists $\boldsymbol{k}\in \Z^N\setminus\{0\}$ such that $c_{\boldsymbol{k}}\neq 0$, and therefore $\boldsymbol{k}\cdot g\in \Z$, $\forall g\in \langle \boldsymbol{\alpha}_1,\ldots,\boldsymbol{\alpha}_d\rangle$ which contradicts the hypothesis. 
\end{proof}
In what follows, we recall total ergodicity and show that $\Z^d$-torus rotations are totally ergodic.
\begin{defn}
A dynamical system $(Y,T_1,\ldots,T_d)$ is totally ergodic if for all $m\in \N^d$ the system $(Y,T_1^{m_1},\ldots,T_d^{m_d})$ is ergodic. Equivalently, if the action of any finite index subgroup of $\Z^d$ is ergodic. 
\end{defn}

\cref{teo3.1} immediately gives: 

\begin{prop}\label{total-ergodicity-torus-actions}
An ergodic $\Z^d$-rotation $(\T^N,\boldsymbol{\alpha}_1,\ldots,\boldsymbol{\alpha}_d)$ is totally ergodic.
\end{prop}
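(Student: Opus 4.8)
The plan is to verify total ergodicity directly through the character criterion of \cref{teo3.1}. Fix $\boldsymbol{m}=(m_1,\ldots,m_d)\in\N^d$; since $T_i$ is the rotation by $\boldsymbol{\alpha}_i$, the system $(\T^N,T_1^{m_1},\ldots,T_d^{m_d})$ is exactly the $\Z^d$-rotation $(\T^N,m_1\boldsymbol{\alpha}_1,\ldots,m_d\boldsymbol{\alpha}_d)$. By the equivalence $(1)\Leftrightarrow(3)$ of \cref{teo3.1}, it suffices to show that for every $\boldsymbol{k}\in\Z^N\setminus\{0\}$ there is $\boldsymbol{t}\in\Z^d$ with $\boldsymbol{k}\cdot\big(\sum_{i=1}^d t_i m_i\boldsymbol{\alpha}_i\big)\notin\Z$. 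Writing $H=\langle\boldsymbol{\alpha}_1,\ldots,\boldsymbol{\alpha}_d\rangle$ and $H_{\boldsymbol{m}}=\langle m_1\boldsymbol{\alpha}_1,\ldots,m_d\boldsymbol{\alpha}_d\rangle$ for the subgroups of $\T^N$ they generate, I would reformulate both ergodicity statements as density: condition $(3)$ for a rotation says precisely that no nontrivial character $\boldsymbol{x}\mapsto e^{2\pi i\boldsymbol{k}\cdot\boldsymbol{x}}$ annihilates the corresponding generating subgroup, which is the same as that subgroup being dense. Thus ergodicity of the original system means $\overline{H}=\T^N$, and the goal becomes $\overline{H_{\boldsymbol{m}}}=\T^N$.

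The key step is the observation that $H_{\boldsymbol{m}}$ has finite index in $H$. Indeed, the quotient $H/H_{\boldsymbol{m}}$ is generated by the images of $\boldsymbol{\alpha}_1,\ldots,\boldsymbol{\alpha}_d$, and each such image has order dividing $m_i$ because $m_i\boldsymbol{\alpha}_i\in H_{\boldsymbol{m}}$; hence $H/H_{\boldsymbol{m}}$ is a quotient of the finite group $\prod_{i=1}^d\Z/m_i\Z$ and is finite. Choosing coset representatives $g_1,\ldots,g_j$ gives $H=\bigcup_{l=1}^j (g_l+H_{\boldsymbol{m}})$, and taking closures (using that translations are homeomorphisms and that finite unions commute with closure) yields $\T^N=\overline{H}=\bigcup_{l=1}^j\big(g_l+\overline{H_{\boldsymbol{m}}}\big)$. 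Therefore the closed subgroup $K:=\overline{H_{\boldsymbol{m}}}$ has finite index in $\T^N$.

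To finish I would invoke connectedness of $\T^N$: a closed finite-index subgroup $K\le\T^N$ is automatically open, since its complement is a finite union of closed cosets, and an open subgroup of a connected group is the whole group. Hence $K=\T^N$, i.e. $H_{\boldsymbol{m}}$ is dense, which is condition $(3)$ of \cref{teo3.1} for the $\boldsymbol{m}$-subaction; as $\boldsymbol{m}$ was arbitrary, $(\T^N,\boldsymbol{\alpha}_1,\ldots,\boldsymbol{\alpha}_d)$ is totally ergodic. I expect the only genuinely substantive point to be precisely this use of connectedness: a per-character argument cannot work, because restricting a single nonzero homomorphism $\Z^d\to\R/\Z$ to a finite-index subgroup may vanish (for instance $t\mapsto t/2$ restricted to $2\Z$), so ergodicity of the subaction cannot be read off from a fixed witness for the original system. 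It is exactly the absence of nontrivial finite-order characters on the connected group $\T^N$ that upgrades density of $H$ to density of the finite-index subgroup $H_{\boldsymbol{m}}$, which is why the statement is special to connected tori.
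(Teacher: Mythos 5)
Your proof is correct, but it takes a genuinely different route from the paper's. The paper argues by contraposition entirely on the character side: if the subaction $(\T^N,m_1\boldsymbol{\alpha}_1,\ldots,m_d\boldsymbol{\alpha}_d)$ is not ergodic, \cref{teo3.1} produces $\boldsymbol{k}\in\Z^N\setminus\{0\}$ with $\boldsymbol{k}\cdot m_i\boldsymbol{\alpha}_i\in\Z$ for all $i$, and then the \emph{modified} character $\boldsymbol{k}'=(\prod_{j=1}^d m_j)\boldsymbol{k}$ satisfies $\boldsymbol{k}'\cdot\boldsymbol{\alpha}_i\in\Z$ for all $i$ and is still nonzero, contradicting ergodicity of the original action. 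That is a three-line computation; connectedness enters only through the hidden fact that $\widehat{\T^N}=\Z^N$ is torsion-free, which is what guarantees $\boldsymbol{k}'\neq 0$. You instead translate ergodicity into density of the orbit subgroup $H$ (a legitimate use of duality, consistent with the toolkit of \cref{Sec3}), observe that $H_{\boldsymbol{m}}$ has finite index in $H$, and invoke the topological fact that a closed finite-index (hence open) subgroup of the connected group $\T^N$ is everything. Your argument is longer, but it makes the structural role of connectedness explicit, and it generalizes verbatim to rotations on arbitrary connected compact abelian groups and to arbitrary finite-index subgroups of $\Z^d$, matching the ``equivalently'' clause in the paper's definition of total ergodicity; the paper's argument buys brevity and never leaves the elementary statement of \cref{teo3.1}.

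One point in your commentary is, however, mistaken: the claim that ``a per-character argument cannot work.'' The paper's proof is exactly such an argument. You are right that, for a fixed $\boldsymbol{k}$, a witness of ergodicity for the full action need not restrict to a witness for the subaction (your $t\mapsto t/2$ on $2\Z$ example); but nothing forces one to keep the same character. Run in the contrapositive direction, an annihilating character $\boldsymbol{k}$ for the subaction becomes, after multiplication by $\prod_j m_j$, an annihilating character for the full action, and torsion-freeness of $\Z^N$ --- the dual avatar of the connectedness you rely on --- keeps it nonzero. So the per-character route does work; it simply uses connectedness on the dual side rather than on the group itself.
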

\begin{proof}
Let $m\in \N^d$. Suppose that $(\T^N,m_1\boldsymbol{\alpha}_1,\ldots,m_d\boldsymbol{\alpha}_d)$ is not ergodic. Then, by \cref{teo3.1}, there is $\boldsymbol{k}\in \Z^N\setminus\{0\}$ such that 
$\forall i\in [d], ~~\boldsymbol{k}\cdot m_i\boldsymbol{\alpha}_i\in \Z$.
In particular, 
$$\forall i\in [d], ~~(\boldsymbol{k}\prod_{j=1}^d m_j)\cdot \boldsymbol{\alpha}_i\in \Z,$$ 
 and by \cref{teo3.1}, $(\T^N,\boldsymbol{\alpha}_1,\ldots,\boldsymbol{\alpha}_d)$ is not ergodic, which is a contradiction.
\end{proof}

We will prove that total ergodicity is equivalent to connectedness in nilsystems. We need the following lemma.

\begin{lema}\label{connected-components-partition}
Let $(X=G/\Gamma,T_1,\ldots,T_d)$ be a $\Z^d$ -nilsystem and $X_0$ be the connected component of $e_X$. Then, there exist $p_1,\ldots,p_d\in\N$ such that $X_0$ is a $(T_1^{p_1},\ldots, T_d^{p_d})$-invariant clopen subset of $X$.
\end{lema}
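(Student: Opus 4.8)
The plan is to realize the connected component $X_0$ explicitly as the image of $G_0$ in $X$, and then to exploit that the transformations $T_i$ merely permute the finitely many connected components of $X$.

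First I would set $K=G_0\Gamma$, which is a subgroup of $G$ containing $\Gamma$ (it is a subgroup because $G_0$ is normal in $G$), and let $\pi\colon G\to X=G/\Gamma$ be the quotient map. I claim $X_0=\pi(G_0)=K/\Gamma$. Indeed, since $G_0$ is open in $G$ and $\pi$ is an open map, $\pi(G_0)$ is open; since $G_0$ is connected and $\pi$ is continuous, $\pi(G_0)$ is connected; and since $\pi(G_0\gamma)=\pi(G_0)$ for every $\gamma\in\Gamma$, we have $\pi(K)=\pi(G_0)$. The left cosets $gK$, $g\in G$, partition $G$, so their images $\pi(gK)=gK/\Gamma$ partition $X$ into open sets; hence the complement of $\pi(G_0)$ is open and $X_0=\pi(G_0)$ is a clopen connected set containing $e_X$, which is exactly the connected component of $e_X$.

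Next I would record finiteness of the number of components: $X$ is compact and the sets $\pi(gK)$ form a pairwise disjoint open cover, so this cover is finite, i.e. $[G:K]<\infty$. Since each $T_i$ acts by left translation by $\tau_i$, it sends $\pi(gK)$ to $\pi(\tau_i gK)$ and therefore permutes the finitely many left cosets $gK$ (equivalently, the finitely many components). In particular $\{\tau_i^k K : k\geq 1\}$ takes only finitely many values, so by the pigeonhole principle there is a smallest $p_i\in\N$ with $\tau_i^{p_i}K=K$, that is, $\tau_i^{p_i}\in K=G_0\Gamma$.

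Finally I would conclude: for this $p_i$, $T_i^{p_i}(X_0)=\tau_i^{p_i}(K/\Gamma)=(\tau_i^{p_i}K)/\Gamma=K/\Gamma=X_0$, so $X_0$ is $(T_1^{p_1},\ldots,T_d^{p_d})$-invariant, and it was already shown to be clopen. The only mild obstacle is that $K=G_0\Gamma$ need not be normal (because $\Gamma$ need not be), so $G/K$ is merely a space of left cosets rather than a quotient group; this is harmless, since the existence of $p_i$ requires only that left translation is a well-defined action on the finite set of left cosets, and that $K$ is a subgroup — both of which follow from the normality of $G_0$ alone.
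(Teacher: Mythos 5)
Your proof is correct, and it takes a genuinely different route from the paper's. The paper's argument is measure-theoretic and stays on $X$: the Haar measure $\mu$ has full support, so the open set $X_0$ has positive measure, and if no power of $\tau_i$ mapped $X_0$ back to itself, the translates $(\tau_i^k X_0)_{k\in\N}$ would be pairwise disjoint (translates of a component are components) with the same positive measure, contradicting $\mu(X)=1$. Your argument is algebraic and lifts to $G$: you identify $X_0$ with $G_0\Gamma/\Gamma$, observe that the images of the left cosets of $K=G_0\Gamma$ are pairwise disjoint open sets covering the compact space $X$ --- the disjointness of these images uses $\Gamma\subseteq K$, a point worth stating explicitly --- so that $[G:K]<\infty$, and then pigeonhole on the finite coset space produces $p_i$ with $\tau_i^{p_i}\in K$. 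The paper's route is shorter and never needs to identify $X_0$; yours costs slightly more work but yields more: an explicit description of all the connected components as the sets $\pi(gK)$, the finiteness of their number, and the membership $\tau_i^{p_i}\in G_0\Gamma$. That extra structure is precisely what \cref{quasi-affine-partition} asserts immediately afterwards, and your identification of $X_0$ with $G_0\Gamma/\Gamma$ is the same one the paper invokes in \cref{Weyl-section} for connected nilmanifolds, so your proof makes those later statements self-contained. One cosmetic remark: closedness of $X_0$ is automatic, since connected components are always closed; the substantive point in both proofs is openness, which comes from $G_0$ being open in the Lie group $G$, and both proofs ultimately rest on the same two pillars --- that openness plus a finiteness/pigeonhole step to extract the period.
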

\begin{proof}
Let $\mu$ be the Haar measure on $X$, which has full support. As $X_0$ is open, we have that for each $i\in [d]$, there exists a minimal period $p_i\in \N$ for which $\tau_i^{p_i}X_0=X_0$, otherwise the sets $(\tau_i^kX_0)_{k\in \N}$ are pairwise disjoint and with the same measure $\mu(X_0)$, which is a contradiction. Given that $\{\tau_i^{j}X_0\}_{j=1}^{p_i}$ partitions $X$,
    and that $X_0$ is open, the set $X_0$ is also closed. Finally, as $\tau_i^{p_i}X_0=X_0$, we have the invariance, and we conclude. 
\end{proof}

\begin{obs}\label{quasi-affine-partition}
Notice that $(X_0,T_1^{p_1},\ldots, T_d^{p_d})$ is a connected $\Z^d$-nilsystem. Furthermore, $X$ is the union of finitely many isomorphic copies of $(X_0,T_1^{p_1},\ldots,T_d^{p_d})$, which are permuted under the dynamics. 
\end{obs}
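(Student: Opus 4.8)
The plan is to read off both assertions directly from \cref{connected-components-partition}, using throughout that $G_0$ is an open \emph{normal} subgroup of $G$ and the standing reduction $G=\langle G_0,\tau_1,\ldots,\tau_d\rangle$ made at the start of this subsection.

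\textbf{The nilsystem claim.} First I would identify $X_0$ with $G_0\Gamma/\Gamma\cong G_0/(G_0\cap\Gamma)$, the orbit of $e_X$ under the connected group $G_0$. Since \cref{connected-components-partition} shows $X_0$ is clopen and $X$ is compact, $X_0$ is compact, so $G_0\cap\Gamma$ is a discrete cocompact subgroup of the connected nilpotent Lie group $G_0$; hence $X_0$ is a nilmanifold. The invariance $\tau_i^{p_i}X_0=X_0$ forces $\tau_i^{p_i}e_X\in X_0$, i.e. $\tau_i^{p_i}\in G_0\Gamma$. Setting $\hat G=\langle G_0,\tau_1^{p_1},\ldots,\tau_d^{p_d}\rangle\subseteq G_0\Gamma$, normality of $G_0$ shows that $\hat G$ is a nilpotent open subgroup of $G$ acting transitively on $X_0$ (since $G_0\leq\hat G$ already does) with discrete cocompact stabilizer $\hat G\cap\Gamma$. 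Thus $X_0=\hat G/(\hat G\cap\Gamma)$ and each $\tau_i^{p_i}$ acts by a genuine left translation, so $(X_0,T_1^{p_1},\ldots,T_d^{p_d})$ is a connected $\Z^d$-nilsystem.

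\textbf{The decomposition claim.} Next I would observe that the connected components of $X$ are precisely the translates of $X_0$: for any $x$ in a component $C$, choosing $g\in G$ with $g\cdot e_X=x$ gives $gX_0=C$. Each such $gX_0=G_0g\Gamma/\Gamma$ is clopen and homeomorphic to $X_0$, so by compactness of $X$ there are finitely many, say $X_0,\ldots,X_{m-1}$. Writing an arbitrary $g$ as a word in $G_0$ and the $\tau_i$ and pushing the $G_0$-factors to the left via normality, one sees $gX_0=(T_1^{j_1}\cdots T_d^{j_d})(X_0)$ for suitable exponents; hence the components are exactly the sets $T_1^{j_1}\cdots T_d^{j_d}(X_0)$. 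In particular the $\Z^d$-action permutes them transitively, while the subgroup generated by $T_1^{p_1},\ldots,T_d^{p_d}$ fixes each one, since it fixes $X_0$ and all the $T_i$ commute.

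\textbf{Isomorphism of the copies.} Finally, for a component $X_k=(T_1^{j_1}\cdots T_d^{j_d})(X_0)$ I would use the homeomorphism $\phi_k:=(T_1^{j_1}\cdots T_d^{j_d})|_{X_0}\colon X_0\to X_k$. Because all the $T_i$ commute as homeomorphisms of $X$, the map $\phi_k$ commutes with each $T_i^{p_i}$, so it is an isomorphism of $\Z^d$-systems from $(X_0,T_1^{p_1},\ldots,T_d^{p_d})$ onto $(X_k,T_1^{p_1},\ldots,T_d^{p_d})$. This exhibits $X$ as a union of finitely many isomorphic copies of $(X_0,T_1^{p_1},\ldots,T_d^{p_d})$ permuted by the dynamics, as claimed. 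The only genuinely delicate point is realizing the maps $\tau_i^{p_i}$ as honest left translations of a nilmanifold presentation of $X_0$, which the passage to $\hat G$ settles; the remaining steps are bookkeeping with clopen components, normality of $G_0$, and the commutativity of the $T_i$.
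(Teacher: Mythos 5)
Your proof is correct and follows essentially the route the paper leaves implicit: it combines \cref{connected-components-partition} with the same orbit-decomposition technique the paper uses at the start of the subsection (there with $G'=\langle G_0,\tau_1,\ldots,\tau_d\rangle$, here with $\hat G=\langle G_0,\tau_1^{p_1},\ldots,\tau_d^{p_d}\rangle$), together with normality of $G_0$ and commutativity of the $T_i$ as homeomorphisms of $X$. The paper states this remark without proof, and your write-up supplies exactly the identification $X_0=G_0\Gamma/\Gamma$, the nilmanifold presentation of $X_0$, and the translation conjugacies that the paper takes for granted.
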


Finally, we have the following characterization of total ergodicity in a $\Z^d$-nilsystem.

\begin{cor}\label{coro7HKM}
Let $(X,T_1,\ldots,T_d)$ be an ergodic $s$-step $\Z^d$-nilsystem for the Haar measure $\mu$. Then, $X$ is connected if and only if $(X,T_1,\ldots,T_d)$ is totally ergodic for $\mu$.
\end{cor}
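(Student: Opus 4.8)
The plan is to prove the two implications separately by assembling the structural results already established. Throughout, recall that by the reduction at the start of this subsection we may assume $G=\langle G_0,\tau_1,\ldots,\tau_d\rangle$, so that $G_0$ is the connected component of the identity and the maximal torus factor of $X$ is $X/[G_0,G_0]\cong\T^N$, on which the dynamics acts as a $\Z^d$-rotation by the vectors $\boldsymbol{\alpha}_1,\ldots,\boldsymbol{\alpha}_d$ that are the projections of $\tau_1,\ldots,\tau_d$.

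For the direction ``connected $\Rightarrow$ totally ergodic'', I would fix $m=(m_1,\ldots,m_d)\in\N^d$ and consider $(X,T_1^{m_1},\ldots,T_d^{m_d})$. This is again an $s$-step $\Z^d$-nilsystem on the connected nilmanifold $X$, with the same identity component $G_0$ and hence the same $[G_0,G_0]$, so its maximal torus factor is again $X/[G_0,G_0]\cong\T^N$, now carrying rotation vectors $m_1\boldsymbol{\alpha}_1,\ldots,m_d\boldsymbol{\alpha}_d$. Since $X$ is connected, \cref{connected-implies-totally} applied to the original system shows that ergodicity of $(X,T_1,\ldots,T_d)$ is equivalent to ergodicity of $(\T^N,\boldsymbol{\alpha}_1,\ldots,\boldsymbol{\alpha}_d)$; the former holds by hypothesis, so the torus rotation is ergodic. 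By \cref{total-ergodicity-torus-actions} it is then totally ergodic, so $(\T^N,m_1\boldsymbol{\alpha}_1,\ldots,m_d\boldsymbol{\alpha}_d)$ is ergodic. Applying \cref{connected-implies-totally} once more, this time to the connected system $(X,T_1^{m_1},\ldots,T_d^{m_d})$, lifts ergodicity from the torus factor back up to $X$. As $m$ was arbitrary, the system is totally ergodic.

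For the converse I would argue by contraposition. Assume $X$ is not connected, so the connected component $X_0$ of $e_X$ is a proper nonempty clopen subset; since the Haar measure $\mu$ has full support, $X_0$ open and nonempty gives $\mu(X_0)>0$, while $X_0$ closed and proper gives $\mu(X_0)<1$. By \cref{connected-components-partition} there are $p_1,\ldots,p_d\in\N$ such that $X_0$ is invariant under the finite index subgroup $\langle T_1^{p_1},\ldots,T_d^{p_d}\rangle$ of $\Z^d$. Hence $X_0$ is a nontrivial invariant set for this finite index subaction, which is therefore not ergodic, and so $(X,T_1,\ldots,T_d)$ fails to be totally ergodic, as desired.

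Most of the genuine content has been absorbed into the cited lemmas, so the only point requiring real care lies in the forward direction: one must verify that passing to powers $T_i^{m_i}$ leaves the identity component $G_0$ of the nilpotent group unchanged, so that the original system and each power system share the \emph{same} maximal torus factor $X/[G_0,G_0]$ with rotation vectors merely rescaled by $m_i$. This identification is exactly what lets \cref{total-ergodicity-torus-actions} do the essential work on the abelian factor and then have ergodicity propagate back to $X$ through \cref{connected-implies-totally}.
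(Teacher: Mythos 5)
Your proof is correct and follows essentially the same route as the paper: the forward direction combines \cref{connected-implies-totally} with \cref{total-ergodicity-torus-actions} exactly as in the text, after observing (as you rightly flag) that passing to powers does not change $G_0$ and hence leaves the maximal torus factor $X/[G_0,G_0]$ intact. For the converse the paper argues directly---total ergodicity makes $(X,T_1^{p_1},\ldots,T_d^{p_d})$ ergodic, hence minimal by \cref{teo11HKM}, forcing $X=X_0$---whereas you take the contrapositive and use full support of the Haar measure to exhibit the clopen set $X_0$ from \cref{connected-components-partition} as a nontrivial invariant set; the two arguments are interchangeable.
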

\begin{proof}
If $X$ is connected, then by \cref{connected-implies-totally} we have that for $k_1,\ldots,k_d\in \N$, $(X,T_1^{k_1},\ldots,T_d^{k_d})$ is ergodic for $\mu$ if and only if the rotations by $\tau_1^{k_1},\ldots,\tau_d^{k_d}$ are ergodic in the maximal factor torus, which is always true given that such system is totally ergodic for $\mu$ by \cref{total-ergodicity-torus-actions}. On the other hand, if the system is totally ergodic for $\mu$, by \cref{connected-components-partition}, we derive that $X_0$ is a $(T_1^{p_1},\ldots, T_d^{p_d})$-invariant closed subset of $X$. But, as $(X,T_1^{p_1},\ldots, T_d^{p_d})$ is ergodic for $\mu$, thus minimal by \cref{teo11HKM}, we have that $X=X_0$, so $X$ is  connected.
\end{proof}

The following lemma will be useful in subsequent proofs.

\begin{lema}\label{transtivityofnil}
Let $(X,T_1,\ldots,T_d)$ be an $s$-step $\Z^d$-nilsystem and $\boldsymbol{K}\in \R^{d\times d}$ a real matrix with integer entries. Define $S_i=T_1^{K_{i,1}}\circ\cdots\circ T_d^{K_{i,d}},$ for each $i\in [d]$. Then, $(X, S_1,\ldots, S_d)$ is an $s$-step $\Z^d$-nilsystem. In addition, if $\boldsymbol{K}$ is invertible and $(X,T_1,\ldots,T_d)$ is connected and minimal, then $(X, S_1,\ldots, S_d)$ is connected and minimal as well.
\end{lema}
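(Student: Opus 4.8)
The plan is to treat the two assertions separately: first the purely structural claim that $(X,S_1,\ldots,S_d)$ is again an $s$-step $\Z^d$-nilsystem, and then the preservation of connectedness and minimality, the latter by reducing to the total ergodicity of the original system.

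For the structural claim I would argue as follows. Each $T_j$ is the left translation by an element $\tau_j\in G$, so $S_i=T_1^{K_{i,1}}\circ\cdots\circ T_d^{K_{i,d}}$ is the left translation by $\sigma_i:=\tau_1^{K_{i,1}}\cdots\tau_d^{K_{i,d}}\in G$. Because $T_1,\ldots,T_d$ commute as homeomorphisms of $X$, any two compositions of their integer powers commute; in particular $S_1,\ldots,S_d$ pairwise commute and hence define a $\Z^d$-action by nilrotations on the \emph{same} nilmanifold $X=G/\Gamma$. Since $G$ is unchanged and remains $s$-step nilpotent, $(X,S_1,\ldots,S_d)$ is an $s$-step $\Z^d$-nilsystem. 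This step uses nothing about $\boldsymbol{K}$ beyond having integer entries.

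For the second assertion, connectedness is immediate, as the phase space $X$ is literally unchanged. The heart of the matter is minimality, which by \cref{teo11HKM} is equivalent to ergodicity. Using again that the $T_j$ commute, one computes for every $\boldsymbol{m}\in\Z^d$ that
$$S_1^{m_1}\cdots S_d^{m_d}=T_1^{n_1}\cdots T_d^{n_d}, \qquad \text{where } \boldsymbol{n}=\boldsymbol{K}^{T}\boldsymbol{m}.$$
Thus the group of transformations generated by $S_1,\ldots,S_d$ coincides with the restriction of the original $\Z^d$-action to the subgroup $\Lambda=\boldsymbol{K}^{T}\Z^d\leq\Z^d$. Because $\boldsymbol{K}$ has integer entries and is invertible, $\det\boldsymbol{K}$ is a nonzero integer, so $\Lambda$ has finite index $|\det\boldsymbol{K}|$ in $\Z^d$. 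Now, since $(X,T_1,\ldots,T_d)$ is connected and minimal, \cref{teo11HKM} gives that it is ergodic and \cref{coro7HKM} then gives that it is totally ergodic. By the finite-index formulation of total ergodicity, the action of $\Lambda$ is ergodic; as ergodicity depends only on the group of transformations and not on its parametrization, $(X,S_1,\ldots,S_d)$ is ergodic. Invoking \cref{teo11HKM} once more, which is legitimate because the first part shows $(X,S_1,\ldots,S_d)$ is a nilsystem, ergodicity yields minimality.

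The one genuinely delicate point is the identification of the $S$-action with the restriction of the $T$-action to $\Lambda=\boldsymbol{K}^{T}\Z^d$: one must check that this subgroup has finite index, which is exactly where the invertibility of $\boldsymbol{K}$ over $\Q$ is used, and that ergodicity transfers across the faithful reparametrization $\boldsymbol{m}\mapsto\boldsymbol{K}^{T}\boldsymbol{m}$. Everything else reduces to bookkeeping with commuting translations and to the already-established equivalences between minimality, ergodicity, and total ergodicity for connected nilsystems.
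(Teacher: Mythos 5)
Your proof is correct and follows essentially the same route as the paper: the structural claim is the same immediate observation about rotations on the unchanged nilmanifold, and minimality is reduced via \cref{coro7HKM} (total ergodicity of connected minimal nilsystems) and \cref{teo11HKM} (ergodic iff minimal) to the ergodicity of a finite-index subaction of the original system. The only cosmetic difference is that you identify the $S$-action exactly with the subgroup $\boldsymbol{K}^T\Z^d$ and invoke the finite-index formulation of total ergodicity, whereas the paper picks $N$ with $N\boldsymbol{K}^{-T}$ integral and shows $\langle S_1,\ldots,S_d\rangle\supseteq\langle T_1^{N},\ldots,T_d^{N}\rangle$; these are the same argument.
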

\begin{proof}
Given a matrix $\boldsymbol{K}\in \Z^{d\times d}$ and $(X,T_1,\ldots,T_d)$ an $s$-step $\Z^d$-nilsystem, it is clear that $(X,(T_1^{K_{i,1}}\circ\cdots\circ T_d^{K_{i,d}})_{i=1}^d)$ is still an $s$-step $\Z^d$-nilsystem, as $X$ is an $s$-step nilmanifold and the transformations $(T_1^{K_{i,1}}\circ\cdots\circ T_d^{K_{i,d}})_{i=1}^d$ are given by the corresponding rotations. For the second part of the statement, suppose that $(X,T_1,\ldots,T_d)$ is connected and minimal, and that $\boldsymbol{K}$ is invertible. Let $N\in \N$ be such that $N\cdot K^{-T}\in \Z^{d\times d}$. Then, notice that the action of $S_i$ corresponds to the action generated by the vector $\boldsymbol{s}_i= \boldsymbol{K}^T\boldsymbol{e}_i$ of $\Z^d$, so $N\cdot \boldsymbol{e}_i= N\cdot \boldsymbol{K}^{-T}\boldsymbol{s}_i\in \Z^d$, where $\boldsymbol{e}_i$ denotes the $i$-th canonical vector of $\R^d$. Hence,
$$\langle S_1,\ldots,S_d\rangle=\langle(T_1^{K_{i,1}}\circ\cdots\circ T_d^{K_{i,d}})_{i=1}^d\rangle\supseteq \langle T_1^{N},\cdots,T_d^{N}\rangle,$$
and thus the action of $(S_1,\ldots, S_d)$ is minimal, given that the system $(X,T_1,\ldots,T_d)$ is totally ergodic by \cref{coro7HKM}.
\end{proof}

To finish this subsection, we prove the proposition that allows us to reduce to the connected case in general.

\begin{prop}\label{prop2}
Let $(X,T_1,\ldots, T_d)$ be a minimal $s$-step $\Z^d$-nilsystem. Then, there exists an invertible matrix $\boldsymbol{K}\in \N^{d\times d}$ such that $(X_0,(T_1^{K_{i,1}}\circ\cdots\circ T_d^{K_{i,d}})_{i=1}^d)$ is a connected minimal $s$-step $\Z^d$-nilsystem, where $X_0$ is the connected component of $e_X$. 
\end{prop}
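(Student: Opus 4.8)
The plan is to restrict the dynamics to the connected component $X_0$, observe that this restriction (for the full stabilizer of $X_0$) is already minimal, exploit connectedness to upgrade this to total ergodicity, and finally spread the generators out by an invertible positive matrix via \cref{transtivityofnil}. First I would apply \cref{connected-components-partition} to obtain $p_1,\ldots,p_d\in\N$ for which $X_0$ is a clopen $(T_1^{p_1},\ldots,T_d^{p_d})$-invariant subset of $X$, recalling from \cref{quasi-affine-partition} that $X$ splits into finitely many translates of $X_0$ permuted by the dynamics and that $(X_0,T_1^{p_1},\ldots,T_d^{p_d})$ is a connected $\Z^d$-nilsystem. Let $\Lambda=\{g\in\Z^d : g\cdot X_0=X_0\}$ be the stabilizer of $X_0$; since each $p_ie_i\in\Lambda$, the subgroup $\Lambda$ has finite index in $\Z^d$, the distinct components of $X$ are indexed injectively by $\Z^d/\Lambda$, and by minimality of $X$ every component is of the form $g\cdot X_0$.

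The key step is to show that $(X_0,\Lambda)$ is minimal. Given $x\in X_0$, set $Y=\overline{\mathcal{O}_\Lambda(x)}\subseteq X_0$, a closed $\Lambda$-invariant set, and consider $Z=\Z^d\cdot Y$. Because $Y$ is $\Lambda$-invariant and $[\Z^d:\Lambda]<\infty$, the set $Z$ is a finite union of translates of $Y$, hence closed and $\Z^d$-invariant, so minimality of $X$ forces $Z=X$. A coset $g\Lambda$ sends $X_0$ to a component equal to $X_0$ exactly when $g\in\Lambda$, so the only translate of $Y$ meeting $X_0$ is $Y$ itself; therefore $Z\cap X_0=Y$, and since $X_0\subseteq X=Z$ we conclude $Y=X_0$. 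Thus $(X_0,\Lambda)$ (with any fixed basis of $\Lambda\cong\Z^d$) is a connected minimal $s$-step $\Z^d$-nilsystem.

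Next I would upgrade minimality through connectedness. By \cref{teo11HKM} the system $(X_0,\Lambda)$ is ergodic, and by \cref{coro7HKM} it is totally ergodic. Since $p_1\Z\times\cdots\times p_d\Z$ is a finite-index subgroup of $\Lambda$, total ergodicity shows it acts ergodically on $X_0$, hence minimally by \cref{teo11HKM}; that is, $(X_0,T_1^{p_1},\ldots,T_d^{p_d})$ is itself a connected minimal $s$-step $\Z^d$-nilsystem. Finally, I would fix an auxiliary $d\times d$ integer matrix $\tilde{\boldsymbol{K}}$ with strictly positive entries and nonzero determinant, for instance $\tilde{\boldsymbol{K}}=\boldsymbol{I}+\boldsymbol{J}$ with $\boldsymbol{J}$ the all-ones matrix (so $\det\tilde{\boldsymbol{K}}=d+1$), and apply \cref{transtivityofnil} to the connected minimal system $(X_0,T_1^{p_1},\ldots,T_d^{p_d})$ with the matrix $\tilde{\boldsymbol{K}}$. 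Setting $K_{i,j}=p_j\tilde K_{i,j}$ makes $\boldsymbol{K}\in\N^{d\times d}$ invertible, and $T_1^{K_{i,1}}\circ\cdots\circ T_d^{K_{i,d}}=(T_1^{p_1})^{\tilde K_{i,1}}\circ\cdots\circ(T_d^{p_d})^{\tilde K_{i,d}}$ (each factor preserving $X_0$), so \cref{transtivityofnil} yields that $(X_0,(T_1^{K_{i,1}}\circ\cdots\circ T_d^{K_{i,d}})_{i=1}^d)$ is a connected minimal $s$-step $\Z^d$-nilsystem, as required.

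The main obstacle is the minimality of $(X_0,\Lambda)$: the full stabilizer $\Lambda$ is typically strictly larger than $p_1\Z\times\cdots\times p_d\Z$, so one cannot directly conclude that the diagonal restriction $(X_0,T_1^{p_1},\ldots,T_d^{p_d})$ is minimal from the partition alone. Once the orbit-closure argument establishes minimality of $(X_0,\Lambda)$, connectedness together with \cref{coro7HKM} (total ergodicity) and \cref{transtivityofnil} handle the rest routinely; the positivity and invertibility of $\boldsymbol{K}$ are arranged by the choice of $\tilde{\boldsymbol{K}}$.
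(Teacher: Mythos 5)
Your proof is correct, but it takes a genuinely different route from the paper's. The paper's proof works purely topologically: it constructs an explicit upper-triangular matrix $\boldsymbol{K}$ by an inductive minimal-choice (Hermite-normal-form style) procedure, and the bulk of the argument is the verification that the rows of $\boldsymbol{K}$ generate the full stabilizer $\Lambda=\{\boldsymbol{l}\in\Z^d : \tau_1^{l_1}\cdots\tau_d^{l_d}X_0=X_0\}$; the fact that $\Lambda$ acts minimally on $X_0$ (your orbit-closure argument with $Z=\Z^d\cdot Y$) is exactly what the paper leaves implicit behind the phrase ``As $(X,T_1,\ldots,T_d)$ is a minimal nilsystem, we only need to prove that [the stabilizer] $\subseteq H$.'' You instead make that minimality explicit and then sidestep the generation problem entirely: since $X_0$ is connected, \cref{teo11HKM} and \cref{coro7HKM} upgrade minimality of $(X_0,\Lambda)$ to total ergodicity, so the finite-index rectangular subgroup $p_1\Z\times\cdots\times p_d\Z\leq\Lambda$ already acts ergodically, hence minimally by \cref{teo11HKM}, and any invertible positive integer matrix (your $\boldsymbol{K}=(\boldsymbol{I}+\boldsymbol{J})\,\mathrm{diag}(p_1,\ldots,p_d)$) finishes via \cref{transtivityofnil}. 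What each approach buys: the paper's construction is self-contained combinatorics producing a triangular basis of the stabilizer, with no ergodic-theoretic input inside the proof itself; yours is shorter and softer, reuses machinery (\cref{coro7HKM}) that \cref{transtivityofnil} needs anyway, and in fact proves something stronger --- that $(X_0,T_1^{p_1},\ldots,T_d^{p_d})$ is \emph{always} minimal, so the paper's cautionary remark that this system ``is not necessarily minimal'' is unnecessary, and one could even take $\boldsymbol{K}=\mathrm{diag}(p_1,\ldots,p_d)$ up to the cosmetic issue of zero entries. Your choice of a strictly positive $\boldsymbol{K}$ also avoids the zero-entry ambiguity in the statement's requirement $\boldsymbol{K}\in\N^{d\times d}$, which the paper's upper-triangular matrix does not.
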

\begin{proof}
By \cref{connected-components-partition} and \cref{transtivityofnil}, there are $p_1,\ldots,p_d\in \N$ such that \hfill\break $(X_0,T_1^{p_1},\ldots, T_d^{p_d})$ is a nilsystem. However, $(X_0,T_1^{p_1},\ldots, T_d^{p_d})$ is not necessarily minimal. To fix this, we will define an upper triangular matrix $\boldsymbol{K}\in \N^{d\times d}$ as follows. Set 
$$K_{i,i}=\min\{ k_i\in [p_i]  :  \exists k_{i+1},\ldots k_{d}\in \Z,  \tau_j^{k_i}\cdots \tau_d^{k_d}X_0=X_0\},$$
and for $j> i$, assuming that we have defined $K_{i,l}$ for every $l\in \{i,\ldots,j-1\}$, we define $K_{i,j}$ by
$$K_{i,j}=\min\{ k_j\in [p_j]  :  \exists k_{j+1},\ldots k_{d}\in \Z,  \tau_i^{K_{i,i}}\cdots \tau_{j-1}^{K_{i,j-1}}\tau_j^{k_j}\cdots \tau_d^{k_d}X_0=X_0\}.  $$

We claim that $(X_0,(T_1^{K_{i,1}}\circ\cdots\circ T_d^{K_{i,d}})_{i=1}^d)$ is a minimal nilsystem. As $(X,T_1,\ldots,T_d)$ is a minimal nilsystem, we only need to prove that
$$\{\tau_1^{l_1}\cdots \tau_d^{l_d}  :  l_1,\ldots,l_d\in \Z,  \tau_1^{l_1}\cdots \tau_d^{l_d} X_0=X_0\}\subseteq  H=\langle (\tau_1^{K_{i,1}}\circ\cdots\circ \tau_d^{K_{i,d}})_{i=1}^d \rangle.  $$

We affirm that for all $i\in [d]$, there are $l_{i,i},l_{i,i+1},\ldots, l_{i,d}\in \Z$ such that
$$\tau_1^{l_1}\cdots \tau_d^{l_d} \equiv \tau_i^{l_{i,i}}\cdots \tau_d^{l_{i,d}} ~\text{ mod } H. $$
Indeed, we assume that this holds for $i<d$, and let $m\in \Z$ and $r\in \{0,\ldots, K_{i,i}-1\}$ such that $l_{i,i}=mK_{i,i}+r$. We define $l_{i+1,j}=l_{i,j}-mK_{i,j}$ for all $ j\geq i+1$ and, as $K_{i,j}=0$ for all $j<i$, we have that
$$\tau_i^{l_{i,i}}\cdots \tau_d^{l_{i,d}} \equiv \tau_{i}^{r}\tau_{i+1}^{l_{i+1,i+1}}\cdots \tau_{i+1}^{l_{i+1,d}} ~\text{ mod } H. $$
Note that $ \tau_{i}^{r}\tau_{i+1}^{l_{i+1,i+1}}\cdots \tau_{i+1}^{l_{i+1,d}}X_0=X_0$, hence, by the minimality of $K_{i,i}$, we obtain that $r=0$, concluding the induction. In this light
$$\tau_{1}^{l_1}\cdots \tau_d^{l_d}\equiv 0  ~\text{ mod } H,$$
which implies that $\tau_{1}^{l_1}\cdots \tau_d^{l_d}\in H$. 
\end{proof}

\begin{obs} The previous proof is based in the idea that there is a parallelepiped that generates all the mesh associated to the dynamics of $\{T_i\}_{i=1}^d$ in $X_0$ (that parallelepiped repeats itself periodically in $\Z^d$). 
\end{obs}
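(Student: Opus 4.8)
The plan is to identify $X_0$ with a sub-nilsystem on which a finite-index sublattice of $\Z^d$ acts minimally, and then to choose $\boldsymbol{K}$ so that its rows generate \emph{exactly} that sublattice. First I would record the relevant geometry. Since $X$ is a compact nilmanifold it has only finitely many connected components, and minimality of the $\Z^d$-action forces it to permute them transitively; hence the stabilizer
$$L=\{\boldsymbol{l}=(l_1,\dots,l_d)\in\Z^d : T_1^{l_1}\circ\cdots\circ T_d^{l_d}(X_0)=X_0\}$$
is a finite-index subgroup of $\Z^d$, of index equal to the number of components. A short argument then shows $(X_0,L)$ is minimal: if $Y\subseteq X_0$ is nonempty, closed and $L$-invariant, then $\bigcup_{\boldsymbol{g}L\in\Z^d/L}\boldsymbol{g}\cdot Y$ is a well-defined nonempty closed $\Z^d$-invariant subset of $X$, hence equals $X$ by minimality; intersecting with $X_0$ and using that the translates $\boldsymbol{g}X_0$ for $\boldsymbol{g}\notin L$ are disjoint from $X_0$ recovers $Y=X_0$. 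Finally, \cref{connected-components-partition} guarantees $p_i\boldsymbol{e}_i\in L$ for each $i$.

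Next I would build $\boldsymbol{K}$ greedily as the upper-triangular matrix of the statement: take $K_{i,i}\in[p_i]$ minimal admitting a completion $T_i^{K_{i,i}}\circ\cdots\circ T_d^{k_d}$ that stabilizes $X_0$, then fill in the entries $K_{i,j}$ for $j>i$ one at a time so that each partial product continues to stabilize $X_0$. By construction every row $\boldsymbol{s}_i=(0,\dots,0,K_{i,i},\dots,K_{i,d})$ lies in $L$, the diagonal entries are positive so $\boldsymbol{K}$ is invertible, and $H:=\langle \boldsymbol{s}_1,\dots,\boldsymbol{s}_d\rangle\subseteq L$.

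The crux is the reverse inclusion $L\subseteq H$, which I would prove by a division-algorithm induction carried out inside the abelian group $\Z^d$, where the commuting maps $T_i$ make ``reduction modulo $H$'' unambiguous. Given $\boldsymbol{l}\in L$, I would show by induction on $i$ that $\boldsymbol{l}$ is congruent mod $H$ to a vector supported on coordinates $\ge i$: at step $i$ I divide the leading entry by $K_{i,i}$, subtract the corresponding multiple of $\boldsymbol{s}_i\in H$, and am left with a vector $T_i^{r}T_{i+1}^{\ast}\cdots T_d^{\ast}$ having $0\le r<K_{i,i}$. Because this vector differs from an element of $L$ by an element of $H\subseteq L$, it again lies in $L$, so it stabilizes $X_0$; the minimality built into the choice of $K_{i,i}$ then forces $r=0$ and advances the induction. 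After the $d$-th step $\boldsymbol{l}\equiv 0\ (\mathrm{mod}\ H)$, i.e. $\boldsymbol{l}\in H$, giving $H=L$.

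With $H=L$ in hand I would assemble the conclusion. By the first part of \cref{transtivityofnil}, $(X,S_1,\dots,S_d)$ with $S_i=T_1^{K_{i,1}}\circ\cdots\circ T_d^{K_{i,d}}$ is an $s$-step $\Z^d$-nilsystem; since the rows of $\boldsymbol{K}$ lie in $L$, the connected set $X_0$ is a clopen $\langle S_1,\dots,S_d\rangle$-invariant subset, so $(X_0,S_1,\dots,S_d)$ is an $s$-step nilsystem on a connected space (compare \cref{quasi-affine-partition}). As $\langle S_1,\dots,S_d\rangle$ realizes on $X_0$ precisely the action of $H=L$, which we showed to be minimal, this system is minimal, and $\boldsymbol{K}$ is invertible, as required. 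I expect the main obstacle to be the bookkeeping in the inductive reduction—specifically, verifying that every intermediate remainder vector genuinely stays in $L$ so that the minimality of $K_{i,i}$ can legitimately be invoked to kill the remainder; the remaining ingredients follow cleanly from the structural results already established.
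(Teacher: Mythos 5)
Your proposal is correct and follows essentially the same route as the paper's proof of \cref{prop2}: the same greedy construction of the upper-triangular matrix $\boldsymbol{K}$ with diagonal entries chosen minimally, and the same division-algorithm induction showing that the stabilizer of $X_0$ in $\Z^d$ is contained in (hence equal to) the lattice $H$ generated by the rows of $\boldsymbol{K}$. The only difference is presentational: you make explicit the step that minimality of $(X,\Z^d)$ forces the stabilizer lattice $L$ to act minimally on $X_0$ (via the union-of-translates argument), a point the paper leaves implicit when it asserts that it suffices to prove the stabilizer is contained in $H$.
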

With this and \cref{prop3.4}, it will be possible to reduce to the case in that $(X,T_1,\ldots, T_d)$ is an $s$-step nilsystem with $X$ connected.

\subsection{Nilsystems and the property of complete independence}

This section is devoted to showing how we reduce to the case where the set of Bohr recurrence has the property of complete independence. First, we prove that if we have a redundant set of Bohr recurrence we can reduce the dimensionality of the system.

\begin{prop}\label{eliminating-redundant-coordinates}
Let $(X=G/\Gamma,T_1,\ldots,T_d)$ be a minimal $s$-step $\Z^d$-nilsystem and let $R\subseteq \Z^d$ be a set of $\Z^d$-Bohr recurrence. If $R$ is redundant, then there exist $R'\subseteq \Z^{d-1}$, a set of $\Z^{d-1}$-Bohr recurrence, and a minimal $s$-step $\Z^{d-1}$-nilsystem $(Y=G^Y/\Gamma^Y,T'_1,\ldots,T'_{d-1})$ such that $G^Y$ is a subgroup of $G$ and if $R'$ is a set of recurrence for $(Y,T'_1,\ldots,T'_{d-1})$, then $R$ is a set of recurrence for $(X,T_1,\ldots,T_d)$.
\end{prop}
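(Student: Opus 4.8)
The plan is to use the redundancy of $R$ to collapse one coordinate and pass to a $\Z^{d-1}$-action built from an integer combination of $T_1,\dots,T_d$. First I would arrange the witnessing data so that the morphism has domain exactly $\Z^{d-1}$. By the definition of redundancy there are $d'<d$, a morphism $\phi_0\colon\Z^{d'}\to\Z^d$, and a set $R_0'$ of $\Z^{d'}$-Bohr recurrence with $\phi_0(R_0')\subseteq R$; composing $\phi_0$ with the inclusion $\Z^{d'}\hookrightarrow\Z^{d-1}$ into the first $d'$ coordinates and invoking \cref{prop:morphism_recurrence} yields a set $R'=R_0'\times\{0\}^{d-1-d'}$ of $\Z^{d-1}$-Bohr recurrence together with a morphism $\phi\colon\Z^{d-1}\to\Z^d$, given by an integer matrix $\boldsymbol{B}=(B_{i,j})\in\Z^{d\times(d-1)}$, satisfying $\phi(R')\subseteq R$.

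Next I would build the lower-dimensional system. Writing $T^{\boldsymbol{n}}=T_1^{n_1}\cdots T_d^{n_d}$, I define the commuting homeomorphisms $T'_j=T_1^{B_{1,j}}\circ\cdots\circ T_d^{B_{d,j}}$ for $j\in[d-1]$. As in \cref{transtivityofnil}, these are nilrotations of $X=G/\Gamma$, so $(X,T'_1,\dots,T'_{d-1})$ is an $s$-step $\Z^{d-1}$-nilsystem, and by commutativity of the $T_i$ one checks the key identity
\begin{equation*}
(T'_1)^{m_1}\cdots(T'_{d-1})^{m_{d-1}}=T^{\boldsymbol{B}\boldsymbol{m}}=T^{\phi(\boldsymbol{m})},\qquad \boldsymbol{m}\in\Z^{d-1}.
\end{equation*}
Let $Y$ be a minimal subsystem of $(X,T'_1,\dots,T'_{d-1})$, which exists by compactness. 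Since $Y=\overline{\mathcal{O}(y)}$ for every $y\in Y$, the orbit-closure structure of nilsystems recalled in \cref{Weyl-section} gives $Y=G^Y/\Gamma^Y$ with $G^Y$ a subgroup of $G$; this is the required minimal $s$-step $\Z^{d-1}$-nilsystem, and $R'$ is a set of $\Z^{d-1}$-Bohr recurrence by the first step.

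Finally I would transfer recurrence. Assume $R'$ is a set of recurrence for $(Y,T'_1,\dots,T'_{d-1})$. By the remark following \cref{def:recurrence_set} there are $y_0\in Y\subseteq X$ and a sequence $(\boldsymbol{m}_k)\subseteq R'$ with $(T'_1)^{m_{k,1}}\cdots(T'_{d-1})^{m_{k,d-1}}y_0\to y_0$; setting $\boldsymbol{n}_k=\phi(\boldsymbol{m}_k)\in R$ and using the identity above on $Y$ yields $T^{\boldsymbol{n}_k}y_0\to y_0$ with $\boldsymbol{n}_k\in R$. The main point is then to upgrade this single recurrent point to the set-of-recurrence property on all of $X$: given a nonempty open $U\subseteq X$, minimality of $(X,T_1,\dots,T_d)$ provides $g\in\langle T_1,\dots,T_d\rangle$ with $gy_0\in U$, and commutativity gives $T^{\boldsymbol{n}_k}(gy_0)=g(T^{\boldsymbol{n}_k}y_0)\to gy_0\in U$, so $gy_0\in U\cap (T^{\boldsymbol{n}_k})^{-1}U$ for large $k$, whence $\boldsymbol{n}_k\in R\cap N(U)$ and $R$ is a set of recurrence for $(X,T_1,\dots,T_d)$. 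I expect the delicate step to be exactly this last upgrade, namely transferring recurrence witnessed only at a point of the possibly thin subsystem $Y$ to recurrence on all of the minimal system $X$, which is where minimality of $X$ and the abelianness of the acting group are essential; checking that the minimal subsystem $Y$ genuinely has the form $G^Y/\Gamma^Y$ with $G^Y\le G$ is the other ingredient, but it is supplied by the cited structure theory.
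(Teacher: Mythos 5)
Your proof is correct and follows essentially the same route as the paper's: both collapse the redundant direction by forming integer combinations of $T_1,\ldots,T_d$ into a $\Z^{d-1}$-action on $X$, pass to a minimal orbit closure $Y=G^Y/\Gamma^Y$ with $G^Y\leq G$ via the structure theory cited in \cref{Weyl-section}, and transfer recurrence through the identity between $T$-words and the new transformations. The only differences are cosmetic: the paper extracts the integer combinations from the orthogonal vector $\boldsymbol{v}$ in the remark following the redundancy definition (via an invertible matrix and \cref{prop3.4}) rather than directly from the morphism $\phi$ as you do, and your closing point-recurrence-plus-minimality argument spells out a transfer step that the paper leaves implicit.
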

\begin{proof}
Suppose $R\subseteq \Z^d$ is redundant and let $\boldsymbol{v}=(v_1,\ldots,v_d)\in \Z^d\setminus\{0\}$ given by the definition of redundancy. Replacing $R$ by $R_v=\{ (n_1,\ldots,n_d) \in R  :  \boldsymbol{v}^T\cdot n=0\}$, we may assume without loss of generality that for all $n\in R$, $\boldsymbol{v}^T\cdot n=0$. Furthermore, as $v\neq 0$ we assume without loss that $v_d=1$. Define the integer matrix $M\in \Z^{d\times d}$ by
$$M_{i,j}=\begin{cases} 
            1              & \text{ if } i=j\\
            v_j           & \text{ if } i=d \text{ and } j\neq d \\
           0               & \text{ else } \\
\end{cases}.$$
Clearly, $M$ is invertible. Note that $M^{-1}R\cap \Z^d$ is a set of $\Z^d$-Bohr recurrence by \cref{prop3.4}. Set $R'=\{\boldsymbol{n} \in \Z^{d-1}  : \exists   \boldsymbol{m}\in M^{-1}R\cap \Z^d,~ n_i=m_i \text{ for each } i\in [d-1]\}$,  
$S_i=
T_iT_d^{-v_i}$ for each $i\in [d]$,
and $Y= \overline{\mathcal{O}_{S_{1},\ldots, S_{d-1}}(e_{X})}$. 
Clearly, $(Y,S_1,\ldots,S_{d-1})$ is a minimal $s$-step $\Z^{d-1}$-nilsystem, and by inspecting the proof of \cite[Chapter 11, Theorem 9]{Host_Kra_nilpotent_structures_ergodic_theory:2018}, we know that $Y$ can be represented as $Y=G^Y/\Gamma^Y$ with $G^Y$ a subgroup of $G$. Finally, since, for all $n\in R$ such that $m=M^{-1}n\in\Z^d$ 
$$T_1^{n_1}\cdots T_d^{n_d}= S_1^{m_1}\cdots S_{d}^{m_d},$$
we have that if $R'$ is a set of recurrence for $(Y=G^Y/\Gamma^Y,S_1,\ldots,S_{d-1})$ then $R$ is a set of recurrence for $(X,T_1,\ldots,T_d)$.
\end{proof}
It is immediate that we can apply \cref{eliminating-redundant-coordinates} until we get a non-redundant set of recurrent, as the following corollary shows.
\begin{cor}\label{redundant-coro}
Let $(X=G/\Gamma,T_1,\ldots,T_d)$ be a minimal $s$-step $\Z^d$-nilsystem and $R\subseteq \Z^d$ be a set of $\Z^d$-Bohr recurrence. Then, there exist $d'\leq d$, $R'\subseteq \Z^{d'}$ a non-redundant set of $\Z^{d'}$-Bohr recurrence and $(Y=G^Y/\Gamma^Y,T'_1,\ldots,T'_{d'})$ a minimal $s$-step $\Z^{d'}$-nilsystem such that $G^Y$ is a subgroup of $G$ and if $R'$ is a set of recurrence for $(Y,T'_1,\ldots,T'_{d'})$ then $R$ is of recurrence for $(X,T_1,\ldots,T_d)$. 
\end{cor}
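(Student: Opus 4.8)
The plan is to prove \cref{redundant-coro} by induction on the dimension $d$, using \cref{eliminating-redundant-coordinates} as the single-step reduction that removes one redundant coordinate at a time. The induction hypothesis will be precisely the statement of the corollary for all ambient dimensions strictly smaller than $d$, with the added bookkeeping that the group of the resulting nilsystem is a subgroup of the group of the system it was reduced from.

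First I would dispose of the base case: if $R$ is already non-redundant, there is nothing to do, and I take $d'=d$, $R'=R$, and $(Y,T_1',\ldots,T_{d'}')=(X,T_1,\ldots,T_d)$, so that $G^Y=G$ is trivially a subgroup of $G$ and the final conditional implication is a tautology. For the inductive step, assume the statement holds in every dimension below $d$ and suppose $R\subseteq\Z^d$ is redundant. Applying \cref{eliminating-redundant-coordinates} produces a set of $\Z^{d-1}$-Bohr recurrence $\tilde{R}\subseteq\Z^{d-1}$ and a minimal $s$-step $\Z^{d-1}$-nilsystem $(\tilde{Y}=G^{\tilde{Y}}/\Gamma^{\tilde{Y}},\tilde{T}_1,\ldots,\tilde{T}_{d-1})$ with $G^{\tilde{Y}}\leq G$, such that recurrence of $\tilde{R}$ for $\tilde{Y}$ implies recurrence of $R$ for $X$. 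Since $\tilde{R}$ lives in $\Z^{d-1}$, the induction hypothesis applies to it and to $\tilde{Y}$, yielding $d'\leq d-1$, a non-redundant set of $\Z^{d'}$-Bohr recurrence $R'\subseteq\Z^{d'}$, and a minimal $s$-step $\Z^{d'}$-nilsystem $(Y=G^Y/\Gamma^Y,T_1',\ldots,T_{d'}')$ with $G^Y\leq G^{\tilde{Y}}$, such that recurrence of $R'$ for $Y$ implies recurrence of $\tilde{R}$ for $\tilde{Y}$.

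The conclusion then follows by chaining: recurrence of $R'$ for $Y$ forces recurrence of $\tilde{R}$ for $\tilde{Y}$, which forces recurrence of $R$ for $X$; transitivity of the subgroup relation gives $G^Y\leq G^{\tilde{Y}}\leq G$, hence $G^Y\leq G$; and $d'\leq d-1\leq d$, with $R'$ non-redundant by construction. The only point deserving genuine attention — the closest thing to an obstacle in an otherwise routine argument — is the well-foundedness of the recursion. Each invocation of \cref{eliminating-redundant-coordinates} strictly decreases the ambient dimension by one, so after finitely many steps the process must terminate in a non-redundant set (at the latest once the dimension is small enough that no nontrivial orthogonality vector $\boldsymbol{v}$ can witness redundancy), at which point the base case closes the induction. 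I do not expect any analytic difficulty here; the content is entirely in \cref{eliminating-redundant-coordinates}, and the corollary merely iterates it while tracking the descending chain of groups $G^Y\leq\cdots\leq G$.
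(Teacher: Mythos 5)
Your proposal is correct and matches the paper's own argument: the paper likewise proves the corollary by iterating \cref{eliminating-redundant-coordinates}, with termination guaranteed because each application strictly lowers the dimension and every set of $\Z^1$-Bohr recurrence is non-redundant. Your formulation as a formal induction on $d$, chaining the implications and the subgroup inclusions $G^Y\leq G^{\tilde{Y}}\leq G$, is just a more explicit bookkeeping of the same iteration.
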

\begin{proof}
By applying \cref{eliminating-redundant-coordinates} iteratively, we get $d'\geq 1$ such that there are $R'\subseteq \Z^{d'}$ a non-redundant set of $\Z^{d'}$-Bohr recurrence and $(Y=G^Y/\Gamma^Y,T'_1,\ldots,T'_{d'})$ a minimal $s$-step $\Z^{d'}$-nilsystem such that $G^Y\leq G$ and if $R'$ is a set of recurrence for $(Y=G^Y/\Gamma^Y,T'_1,\ldots,T'_{d'})$ then $R$ is of recurrence for $(X=G/\Gamma,T_1,\ldots,T_d)$. This is possible by the fact that the process stops at most in $d'=1$. In this condition every set of Bohr recurrence is non-redundant. 
\end{proof}

Now we present the result which allows us to reduce to the case in which we have a set of Bohr recurrence $R\subseteq \Z^d$ with the property of complete independence.

\begin{teo}\label{redefiningR}
Let $R\subseteq \Z^d$ be a non-redundant set of $\Z^d$-Bohr recurrence and $(X,T_1,\ldots,T_d)$ be a connected minimal $s$-step $\Z^d$-nilsystem. Then, there exist a non-redundant, essential, and ordered set of $\Z^{d}$-Bohr recurrence $\tilde{R}\subseteq \Z^{d}$, a connected minimal $s$-step $\Z^{d}$-nilsystem $(X,S_1,\ldots,S_{d})$, and $P\in \mathcal{BC}(\tilde{R})$ satisfying \cref{Complete-independence}, such that if $\tilde{R}$ is a set of recurrence for $(X,S_1,\ldots,S_{d})$, then $R$ is a set of recurrence for $(X,T_1,\ldots,T_d)$.
\end{teo}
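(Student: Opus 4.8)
The plan is to realize the reduction through a single invertible integer matrix $K\in\Z^{d\times d}$: set $\tilde R=K^{-T}R\cap\Z^d$ and $S_i=T_1^{K_{i,1}}\circ\cdots\circ T_d^{K_{i,d}}$. By \cref{prop3.4} and \cref{prop3.4-v2} this keeps $\tilde R$ a non-redundant set of $\Z^d$-Bohr recurrence, by \cref{transtivityofnil} it keeps $(X,S_1,\ldots,S_d)$ a connected minimal $s$-step nilsystem, and the identity $S_1^{m_1}\cdots S_d^{m_d}=T_1^{(K^Tm)_1}\cdots T_d^{(K^Tm)_d}$ (so that $m\in\tilde R$ corresponds to $K^Tm\in R$) transfers recurrence from $(\tilde R,(S_i))$ to $(R,(T_i))$. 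First I would reduce to the essential and ordered case. Since $R$ is non-redundant, $0\notin R$, and writing $R=\bigcup_{\emptyset\neq J\subseteq[d]}R_J$ as in \cref{essential-Bohr-set}, any piece $R_J$ with $j\notin J$ lies in $\{n:n_j=0\}=\{n:e_j^Tn=0\}$; such a piece cannot be a set of Bohr recurrence, for otherwise it would witness the redundancy of $R$. Hence partition regularity forces the fully essential piece $R_{[d]}$ to be a (still non-redundant) set of Bohr recurrence, and after a coordinate permutation (\cref{Borh-ordered}) I may assume $R$ is essential and ordered.

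Next I would fix a vector of Bohr correlations $\boldsymbol P\in\mathcal{BC}(R)$ via \cref{Bohrcompleteness}. By the consistency relation $P_{i,l}=P_{i,j}P_{j,l}$ of \cref{consistentency}, the zero pattern of the first row is monotone, so the coordinates split into consecutive scale blocks $B_1,\ldots,B_t$ (within a block the ratios $n_j/n_i$ stay bounded away from $0$, across blocks they tend to $0$). In these terms the property of complete independence (\cref{Complete-independence}) is exactly the requirement that each block's direction $(P_{\ell,j})_{j\in B}$ (with $\ell$ the leader of the block) have rationally independent entries. The heart of the argument is an iteration that makes the top block independent and then recurses on the remaining coordinates. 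Suppose the top-block direction $\boldsymbol p=(P_{1,j})_{j\in B_1}$ is rationally dependent, and let $V$ be its (saturated) lattice of rational relations, of corank $m=\dim_\Q\operatorname{span}(\boldsymbol p)$. Extending an integer basis of $V$ to a $\Z$-basis of $\Z^{|B_1|}$ and acting only on $B_1$ (identity on the other coordinates) produces a unimodular $K$ whose transformed top block has exactly $m$ nonzero entries, forming a $\Q$-basis of $\operatorname{span}(\boldsymbol p)$ and hence rationally independent, while the remaining $|B_1|-m$ coordinates, each of the form $v^Tn$ with $v\in V$, acquire correlation $0$ with the top block and drop to a strictly smaller scale. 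Here non-redundancy is essential: it guarantees that these $v^Tn$ do not vanish on a recurrent subset, so the dimension is not lost, and essentiality is repaired by deleting the finitely many rational hyperplanes $\{w^Tn=0\}$ coming from the rows of $K^{-T}$, each of which is non-recurrent (again by non-redundancy) and therefore removable by partition regularity.

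After such a step the top $m$ coordinates carry rationally independent correlations that are fixed by, and therefore undisturbed by, any later transformation supported on the lower coordinates. I would then re-apply \cref{Bohrcompleteness} with $I$ taken to be the already-resolved pairs: their values persist because the transformation left those coordinates alone, so \cref{Bohrcompleteness} yields fresh correlations for the new set extending this independent prefix. Repeating the construction on the strictly smaller set of unresolved coordinates, the size of the rationally independent prefix increases at each step, so the process stops after at most $d$ steps with a set all of whose blocks are rationally independent, that is, with a correlation vector satisfying \cref{Complete-independence}. Composing the successive unimodular matrices gives the single matrix $K$, and hence the promised $\tilde R$ and $(S_1,\ldots,S_d)$, with the recurrence implication as above.

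The main obstacle is the multi-scale bookkeeping. Zeroing a rational relation sends the offending coordinates to an \emph{a priori} unknown lower scale, and their finer correlations are not visible in the first-order data $\boldsymbol P$ at hand; this is precisely what rules out a single closed-form transformation and forces the iterative re-derivation of correlations through \cref{Bohrcompleteness}. The two delicate points to nail down are (i) that at each stage the rational independence already achieved on the prefix genuinely survives as a sub-block of Bohr correlations of the transformed set, so that \cref{Bohrcompleteness} may legitimately extend it, and (ii) that essentiality can be maintained \emph{without} lowering the dimension — which is exactly where non-redundancy enters, since it is what makes the discarded hyperplanes $\{w^Tn=0\}$ non-recurrent and the dropped coordinates genuinely nonzero.
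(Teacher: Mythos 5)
Your proposal is correct, and it runs on the same toolkit as the paper's proof --- \cref{prop3.4}, \cref{prop3.4-v2}, \cref{transtivityofnil}, \cref{Bohrcompleteness}, \cref{consistentency}, and the interplay of non-redundancy with the Ramsey property to keep the set essential without losing dimension (your justification of that last point, via the pieces $R_J$ and the hyperplanes $\{w^T\boldsymbol{n}=0\}$ witnessing redundancy, is in fact more explicit than the paper's) --- but it organizes the central induction differently. The paper kills rational relations \emph{one at a time}: at stage $N$ it picks a single relation $\sum_{j=l_N}^{N+1}v_jP^N_{l_N,j}=0$, applies an explicit elementary integer matrix built from $\boldsymbol{v}$ to the restricted set $R^N_{P^N,\epsilon}$, and terminates through a nested induction in which the count $|L_N|$ of coordinates correlated to the $N$-th one strictly decreases. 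You instead resolve an entire block in one stroke: extending a $\Z$-basis of the saturated relation lattice $V$ of the block direction $\boldsymbol{p}$ to a $\Z$-basis of $\Z^{|B_1|}$ gives a unimodular matrix after which exactly $m=\dim_\Q\operatorname{span}_\Q(\boldsymbol{p})\geq 1$ coordinates survive with rationally independent correlations (and your remark that rational independence survives division by the leader entry is the right way to see that this yields \cref{Complete-independence} row by row), while the remaining coordinates acquire correlation $0$; termination in at most $d$ outer steps is then immediate, which is cleaner than the paper's double induction. Your two ``delicate points'' are exactly where the paper spends its effort, and your resolutions are the correct ones. Two pieces of fine print to make explicit in a full write-up: (a) the unimodular matrix should be applied to $R_{\boldsymbol{P},\epsilon}$ rather than to all of $R$ (as in the paper's definition $R'^N=M^{-1}R^N_{P^N,\epsilon}\cap\Z^d$), since the scale control placing the dropped coordinates strictly below the resolved block, and hence the claim that the resolved prefix stays on top of the ordering, is only valid on that restriction; and (b) re-ordering the transformed set requires one permutation that works simultaneously for every $\epsilon$-level of the correlation sets --- since the sets $\tilde{R}_{\boldsymbol{Q},I,1/m}$ are nested, a pigeonhole on top of \cref{Borh-ordered} supplies it --- whereas the paper instead avoids this by computing that only the $(N+1)$-st coordinate can change its place.
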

\begin{proof}
Using \cref{essential-sufficiency}, and possibly replacing $R$ by a subset of it, we may assume that $R$ is essential. We also assume that $R$ is ordered by \cref{Borh-ordered}. Let $P\in \mathcal{BC}(R)$. We will follow an induction process, changing $(R,P,(X,T_1,\ldots,T_d))$ iteratively. We start with $R^1=R$ and $P^1=P$, $T_{i,1}=T_i$ for all $i\in [d]$. For $N<d$ suppose that we have set $(R^N,P^N,(X,T_{1,N},\ldots,T_{d,N}))$ such that $R^N$ is a non-redundant, essential and ordered set of $\Z^d$-Bohr recurrence, and $P^N\in \mathcal{BC}(R^N)$ satisfies
\begin{equation}\label{CI-Induction}
\forall i\in[N], \{ P_{i,j}^N  :   P_{i,j}^N\neq 0,~ \forall i\leq j\leq N \} \text{  are rationally independent}.   
\end{equation}
In addition, suppose that if $R^N$ is a set of recurrence for $(X,T_{1,N},\ldots,T_{d,N})$, then $R$ is a set of recurrence for $(X,T_1,\ldots,T_d)$. 

We extend this to $N+1$ as follows: If
$$
\forall i\in[N+1], \{ P_{i,j}^N  :   P_{i,j}^N\neq 0,~ \forall i\leq j\leq N+1 \}   
$$
 are rationally independent, we set $(R^{N+1},P^{N+1},(X,T_{1,N+1},\ldots,T_{d,N+1}))=(R^N,P^N,(X,T_{1,N}$ $,\ldots,T_{d,N}))$, extending in this way the hypotheses to $N+1$. If not, we repeat the following process inductively:\\
 
 Define $L_N=\{ j>N \mid P_{N,j}\neq 0$\} and $l_N=\min\{ l\leq N+1  :  P_{l,N+1}^N\neq 0\}$. Notice that $\{ j \in [N+1]  :  P_{l_N,j}^N\neq 0\}=\{l_N,\ldots,N+1\}$ by \cref{consistentency}. Since the coordinates in $R^N$ are ordered, the set $L_N$ corresponds to the coordinates beyond $N$ that correlate with the $N$-th coordinate. Furthermore, if $L_N$ is nonempty, it must contain $N+1$ by \cref{consistentency}. On the other hand, $l_N$ represents the minimal coordinate $j\leq N+1$ that has a nonzero correlation with $N+1$.
 
As $\{P_{l_N,j}^N\}_{j=l_N}^{N+1}$ are not rationally independent, there exists $\boldsymbol{v}\in \Z^{d}\setminus \{\boldsymbol{0}\}$ with $v_j=0$ for all $j<l_N$ and $v_{N+1}\neq 0$, such that
\begin{equation}\label{reconfiguring-the-correlations}
\sum_{j=l_N}^{N+1}v_jP_{l_N,j}^N=0.
\end{equation}       
Define the real matrix $M\in \R^{d\times d}$ with integer entries by
$$M_{i,j}=\begin{cases} 
v_{N+1}                & \text{ if } i=j\\
-v_j       & \text{ if } i=N+1 \text{ and }  j\in \{l_N,\ldots,N+1\} \\
           0               & \text{ else } \\
\end{cases} .$$
Clearly, $M$ is invertible, with inverse
$$M^{-1}_{i,j}=\begin{cases} 
1/v_{N+1}                & \text{ if } i=j\\
v_j/v_{N+1}^2        & \text{ if } i=N+1 \text{ and }  j\in \{l_N,\ldots,N\}\\
           0               & \text{ else } \\
\end{cases} .$$

We set $\epsilon=(\min_{j\in \{l_N,\ldots,N+1\}} |P_{l_N,j}^N|)/(2d ||\boldsymbol{v}||_\infty)\in (0,1)$ where $||\boldsymbol{v}||_\infty=\max_{j\in [d]} |v_j|$.  Notice that $R'^{N}:=M^{-1}R^N_{P^N,\epsilon}\cap \Z^d$ is a non-redundant set of $\Z^d$-Bohr recurrence by \cref{prop3.4} and \cref{prop3.4-v2}, where
\begin{equation}\label{defRPeps}
  R_{P^N,\epsilon}^N=\left\{ \boldsymbol{n}\in R \mid |\frac{n_j}{n_i}-P_{i,j}^N|<\epsilon,~ \forall j\geq i\right\}.  
\end{equation}
We may further assume that $R'^N$ it is essential without loss of generality.

We also define 
$$T_{j,N}'=\begin{cases}   T_j^{v_{N+1}}T_{N+1}^{-v_j} & \text{if }j\in\{l_N,\ldots,N\}\\
T_j^{v_{N+1}} &\text{ else}\end{cases}. $$
By \cref{transtivityofnil}, $(X,T_{1,N}',\ldots,T_{d,N}')$ is a connected minimal $s$-step nilsystem. For $n\in R^N$ such that $m=M^{-1}n \in \Z^d$, we have
\begin{equation}\label{recurrence-bond}
T_{1,N}^{n_1}\cdots T_{d,N}^{n_d}= T_{1,N}'^{m_1}\cdots T_{d,N}'^{m_d}.
\end{equation}

Additionally, denote $I_N=\{(i,j)\in [d]^2 : j\geq i\text{ and } i\neq N+1\}$ and define $P'^N\in[-1,1]^{I}$ such that for each $(i,j)\in I$
$$P'^N_{i,j}= \begin{cases}
    P^N_{i,j} & \text{ if } j\neq N+1\\
    0 & \text{ if } j=N+1
\end{cases}.  $$
We observe that $R'^{N}$ can be still assumed to be ordered, giving that for every $\boldsymbol{m}\in R'^N$ there is $n\in R^N_{P,\epsilon}$ such that $$\boldsymbol{m}=\Bigl(\frac{n_1}{v_{N+1}},\ldots,\frac{n_N}{v_{N+1}},\sum_{j=l_N}^{N+1} \frac{v_j}{v_{N+1}^2} n_j,\frac{n_{N+2}}{v_{N+1}},\cdots, \frac{n_{d}}{v_{N+1}}\Bigr), $$
and thus the only coordinate possibly changing order is the $(N+1)$-th coordinate, which can only move to the right (i.e. it can only get relatively smaller in the new order of $R'^N$), given that
\begin{align*}
   \left| \frac{\sum_{j=l_N}^{N+1} \frac{v_j}{v_{N+1}^2} n_j}{\frac{n_N}{v_{N+1}}} \right|&= \left| \sum_{j=l_N}^{N+1} \frac{v_j}{v_{N+1}} \frac{n_j}{n_N}\right|\\
   &= \left|\frac{n_{l_N}}{n_N }\right| \cdot \frac{1}{|v_{N+1}|}\cdot \left|\sum_{j=l_N}^{N+1}v_j \frac{n_j}{n_{l_N}}\right|\\
   &\leq\frac{2}{|P_{l_N,N}|}\cdot  \sum_{j=l_N}^{N+1} \norm{\boldsymbol{v}}_\infty |\frac{n_j}{n_{l_N}}- P_{l_N,j} | <1
\end{align*}
where we used \cref{reconfiguring-the-correlations}, \cref{defRPeps}, and that 
$$|\frac{n_{N}}{n_{l_N}}-P_{l_N,N} |< \epsilon \Longrightarrow |P_{l_N,N}|- \frac{|n_{N}|}{|n_{l_N}|}< \frac{|P_{l_N,N}|}{2} \Longrightarrow  \frac{|n_{l_N}|}{|n_{N}|}<\frac{2}{|P_{l_N,N}|}. $$
This being so, for each $\boldsymbol{m}\in R'^N$, $|m_1|\leq \cdots \leq |m_N|<|m_{N+1}|$ and $|m_N|\leq |m_{N+2}|\leq \cdots |m_d|$, so we can assume $R'^N$ ordered by rearranging the $(N+1)$-th coordinate of $R'^{N}$, $P'^N$, and $(X,T_{1,N}',\ldots,T_{d,N}')$ (possibly reducing to a subset of $R'^{N}$). In addition, rearranging the coordinates of $I$ as well, we have that $P'^N$ is a vector of $I$-Bohr correlations and using \cref{Bohrcompleteness} we can extend $P'^N$ we can assume that $P'^N$ is a vector of Bohr correlations. Finally, if $R'^{N}$ is a set of recurrence for $(X,T_{1,N}',\ldots,T_{d,N}'),$ then $R^N$ is a set of recurrence for $(X,T_{1,N},\ldots,T_{d,N})$ by \eqref{recurrence-bond}. Hence, we can replace $(R^N,P^N,(X,T_{1,N},\ldots,T_{d,N}))$ by $(R'^N,P'^N,(X,T_{1,N}',\ldots,T_{d,N}'))$. We also define $L_N'=\{ j>N \mid P_{N,j}'^N\neq 0$\} and $l_N'=\min\{ l\leq N+1   :  P_{l,N+1}'^N\neq 0\}$. We note that $|L_N'|<|L_N|$ as $L_N'\subseteq L_N$ and $P'^{N+1}_{j,c_{N+1}}=0$ for all $j\in [N]$ by \cref{reconfiguring-the-correlations}, where $c_{N+1}\in [d]$ is the coordinate designated to the previous $(N+1)$-th coordinate. Subsequently, if $\{P_{l_N',j}'^{N}\}_{j=l_N'}^{N+1}$ are not rationally independent, the process is repeated.

Since each iteration of the process reduces the cardinality of $|L_N|$ by one, the process concludes after finitely many iterations, yielding the desired result.
\end{proof}

\subsection{The Main Theorem}\label{5.4}
From now on, we assume $s\geq 2$. For an $s$-step $\Z^d$-nilsystem $(X,T_1,\ldots,T_d)$ denote by
\begin{equation}\label{1}
\tilde{G}=G/G_s,~~\tilde{\Gamma}=\Gamma/(\Gamma\cap G_s)~~ \text{ and } ~ \tilde{X}=\tilde{G}/\tilde{\Gamma}.
\end{equation}
Then, $\tilde{G}$ is an $(s-1)$-step nilpotent Lie group, $\tilde{\Gamma}$ is a discrete cocompact subgroup, $\tilde{X}$ is an $(s-1)$-nilmanifold and the quotient map $G\to \tilde{G}$ induces a projection $\pi\colon X\to \tilde{X}$. Therefore, we can view $\tilde{X}$ as the quotient of $X$ under the action of $G_s$. Let $\tilde{\tau_1},\ldots,\tilde{\tau_d}$ be the image of $\tau_1,\ldots,\tau_d$ in $\tilde{G}$ and $\tilde{T}_1,\ldots,\tilde{T}_d$ be the rotations by $\tilde{\tau_1},\ldots,\tilde{\tau_d}$ in $\tilde{X}$. Then, $(\tilde{X},\tilde{T_1},\ldots,\tilde{T_d})$ is an $(s-1)$-nilsystem and $\pi\colon X\to\tilde{X}$ is a factor map.\\

Finally, we are able to prove \cref{Theorem-A}.
\begin{customthm}{A}
Let $R\subseteq \Z^d$ be a set of $\Z^d$-Bohr recurrence. Then, for every integer $s\geq 1$, $R$ is a set of recurrence for every minimal $s$-step $\Z^d$-Weyl system.
\end{customthm}

\begin{proof}
We proceed by induction on $s$. If $s=1$, the result is direct. Henceforth, we assume that $s\geq 2$, and that the statement holds for $(s-1)$-step $\Z^d$-Weyl systems. Let $R\subseteq \Z^d$ be a set of $\Z^d$-Bohr recurrence and let $(X=G/\Gamma,T_1,\ldots, T_d)$ be a minimal $s$-step $\Z^d$-Weyl system. First, we can assume that $R$ is non-redundant, by \cref{redundant-coro}, and replacing the pair $(R,(X=G/\Gamma,T_1,\ldots, T_d))$ by another pair \break  $(R',(Y=G^Y/\Gamma^Y,T'_1,\ldots,T'_{d'}))$, where $d'\leq d$, $R'\subseteq \Z^{d'}$ is a set of $\Z^{d'}$-Bohr recurrence and $(Y,T'_1,\ldots,T'_{d'})$ is a minimal $s$-step $\Z^{d'}$-nilsystem. Furthermore, $(Y,T'_1,\ldots,T'_{d'})$ is a Weyl system, as $G^Y$ being a subgroup of $G$ implies that $G_0^Y$ is abelian, just like $G_0$.

Next, we assume without loss of generality that $X$ is connected and therefore, after a topological conjugacy using \cref{affine-equals-G0-abelian}, affine. Indeed, by \cref{prop2} we have that there exists an invertible matrix $\boldsymbol{K}\subseteq \N^{d\times d}$ such that $(X_0,(T_1^{K_{i,1}}\circ \cdots \circ T_d^{K_{i,d}})_{i=1}^d)$ is a minimal affine nilsystem. On the other hand, by \cref{prop3.4} the set $R_0=\{\boldsymbol{n}\in \Z^d  :  \boldsymbol{K}\cdot\boldsymbol{n} \in R\} $ is a non-redundant set of $\Z^d$-Bohr recurrence. Substituting $X$ by $X_0$ and $R$ by $R_0$, we reduce to the case where $X$ is connected. This is possible due to $R_0$ being a set of recurrence for $X_0$ implies that $R$ is a set of recurrence for $X$. In fact, as we mentioned in \cref{quasi-affine-partition}, $X$ is partitioned by finite translations of $X_0$. So, for an open subset $V\subseteq X$, we have that there are $p_1,\cdots,p_d\in \N$ such that $V\cap T_1^{p_1}\cdots T_d^{p_d}X_0\neq \emptyset$. In particular, the open set $V'=T_1^{-p_1}\cdots T_d^{-p_d}V\cap X_0$ is nonempty. If $R_0$ is a set of recurrence for $X_0$, then there is $\boldsymbol{n}\in R_0$ such that $V'\cap \prod_{i=1}^d(T_1^{K_{i,1}}\circ \cdots \circ T_d^{K_{i,d}})^{n_i}V'\neq \emptyset$, which implies  $V\cap T_1^{-(\boldsymbol{K}\cdot\boldsymbol{n})_1}\cdots T_d^{-(\boldsymbol{K}\cdot\boldsymbol{n})_d}V\neq \emptyset$. As $\boldsymbol{K}\cdot\boldsymbol{n} \in R$, we have that $R$ would be a set of recurrence for $X$, concluding that is enough to assume that $(X,T_1,\ldots,T_d)$ is affine.

Thanks to \cref{redefiningR}, we can also assume that $R$ is essential and ordered with $P\in \mathcal{BC}(R)$ satisfying \cref{Complete-independence}, changing the system $(X=G/\Gamma,T_1,\ldots, T_d)$ to another connected minimal $s$-step $\Z^d$-affine nilsystem.\\

Let $U$ be a nonempty open subset of $X$. As $X$ is minimal, by translating $U$ to the origin using the transformations $(T_1,\ldots,T_d)$, we can assume without loss of generality that $U$ is the open ball $B(e_X,3\epsilon)$ for some $\epsilon>0$. Let $R_\epsilon\subseteq R$ given by \cref{lema-previo-resultado-principal}. Let $\pi\colon X\to \tilde{X}=X/G_s$ be the canonical factor map and $(\tilde{X},\tilde{T}_1,\ldots ,\tilde{T}_d)$ the $(s-1)$-nilsystem discussed at the beginning of \cref{5.4}. Since $(\tilde{X},\tilde{T}_1,\ldots ,\tilde{T}_d)$ is an $(s-1)$-step affine nilsystem, it follows from the induction hypothesis, \cref{Bands-Prop}, and the fact that $\pi$ is open by \cref{open-nilmap}, that there exist arbitrarily large elements $\boldsymbol{n}\in R_{\epsilon}$ with $\pi(B(e_X,\epsilon)) \cap \tilde{T}_1^{-n_1}\cdots \tilde{T}_d^{-n_d}\pi(B(e_X,\epsilon)) \neq \emptyset$. It follows that there exist $x\in X$ and $v\in G_s$ with $d_X(x,e_X)<\epsilon$ and $d_X(T_1^{n_1}\cdots T_d^{n_d}x,v\cdot e_X)<\epsilon$. Lifting $x$ to $G$, we obtain $g\in G$ and $\gamma\in \Gamma$ with
\begin{equation}\label{inductive-hypothesis}
  d_G(g,e_G)<\epsilon \text{  and  } d_G(\tau_1^{n_1}\cdots \tau_d^{n_d} g, v\gamma)<\epsilon.   
\end{equation}

Using \cref{lema-previo-resultado-principal} to approximate $v^{-1}\in G_s$ and the fact that $\boldsymbol{n}\in R_{\epsilon}$ yield the existence of $\boldsymbol{h}\in G_{s-1}$ such that
\begin{equation}\label{First-eps}
  d_G(\boldsymbol{h},e_G)< \epsilon \text{  and  } d_X([\boldsymbol{h}^{-1},\tau_1^{n_1}]\cdots [\boldsymbol{h}^{-1},\tau_d^{n_d}]\Gamma,v^{-1}\Gamma)<\epsilon.
\end{equation}
In particular, lifting the last inequality to $G$ we obtain $\theta\in \Gamma\cap G_s$ such that 
\begin{equation}\label{claim-eq}
d_G([\boldsymbol{h}^{-1},\tau_1^{n_1}]\cdots [\boldsymbol{h}^{-1},\tau_d^{n_d}]
    ,v^{-1}\theta)<\epsilon.
\end{equation}
Write $y=\boldsymbol{h} \cdot x$, and note that $y$ is the projection of $\boldsymbol{h}g$ in $X
$ and 
$$d_X(y,e_X)  \leq d_G(\boldsymbol{h}g,e_G)\leq d_G(\boldsymbol{h},e_G) + d_G(g,e_G)<2\epsilon. $$

Using that $[\boldsymbol{h}^{-1},\tau_1^{n_1}],\ldots, [\boldsymbol{h}^{-1},\tau_d^{n_d}]\in G_s$ we get
\begin{equation*}
          d_X(T_1^{n_1}\cdots T_d^{n_d}y,e_X)\leq d_G(\tau_1^{n_1}\cdots \tau_d^{n_d}\boldsymbol{h}g,\theta\gamma)= d_G(\boldsymbol{h}[\boldsymbol{h}^{-1},\tau_1^{n_1}]\cdots [\boldsymbol{h}^{-1},\tau_d^{n_d}]\tau_1^{n_1}\cdots \tau_d^{n_d}g,\theta\gamma).
\end{equation*}
Using the triangle inequality, the right invariance of the distance $d_G$, and \cref{First-eps} yields 
\begin{align*}
   d_X(T_1^{n_1}\cdots T_d^{n_d}y,e_X)&\leq d_G(\boldsymbol{h},e_G)+d_G([\boldsymbol{h}^{-1},\tau_1^{n_1}]\cdots [\boldsymbol{h}^{-1},\tau_d^{n_d}]\tau_1^{n_1}\cdots \tau_d^{n_d}g,\theta\gamma)  \\
   &\leq \epsilon + d_G( \tau_1^{n_1}\cdots \tau_d^{n_d}g[\boldsymbol{h}^{-1},\tau_1^{n_1}]\cdots [\boldsymbol{h}^{-1},\tau_d^{n_d}],\theta\gamma). 
\end{align*}
Invoking again the triangle inequality, \cref{inductive-hypothesis}, and \cref{claim-eq} we get
\begin{align*}
      d_X(T_1^{n_1}\cdots T_d^{n_d}y,e_X)& \leq \epsilon + d_G(\tau_1^{n_1}\cdots \tau_d^{n_d}g[\boldsymbol{h}^{-1},\tau_1^{n_1}]\cdots [\boldsymbol{h}^{-1},\tau_d^{n_d}]
    ,\tau_1^{n_1}\cdots \tau_d^{n_d}gv^{-1}\theta)\\
    &\hspace{0.6cm} +d_G(\tau_1^{n_1}\cdots \tau_d^{n_d}gv^{-1}\theta,\theta\gamma)  \\
     &=\epsilon + d_G([\boldsymbol{h}^{-1},\tau_1^{n_1}]\cdots [\boldsymbol{h}^{-1},\tau_d^{n_d}]  ,v^{-1}\theta)+d_G(\tau_1^{n_1}\cdots \tau_d^{n_d}g,v\gamma)<3\epsilon ,
\end{align*}
finishing the proof. 
\end{proof}

\end{document}